\documentclass[a4paper, 10pt, twoside, notitlepage]{amsart}

\usepackage[utf8]{inputenc}
\usepackage{xcolor}
\usepackage{amsmath} 
\usepackage{amssymb} 
\usepackage{amsthm}
\usepackage{geometry}
\usepackage{graphicx}
\usepackage{mathtools}
\usepackage{esint}
\usepackage{enumitem}
\usepackage{comment}
\usepackage[colorlinks=true,linkcolor=blue]{hyperref}

\theoremstyle{plain}
\newtheorem{thm}{Theorem}
\newtheorem{prop}{Proposition}[section]
\newtheorem{lem}[prop]{Lemma}

\newtheorem{rmk}[prop]{Remark}

\newcommand {\R} {\mathbb{R}} 
 \newcommand {\N} {\mathbb{N}}
 
\newcommand {\p} {\partial}

\newcommand {\D} {\Delta}

\newcommand {\supp} {\text{supp}}

\DeclareMathOperator{\F} {\mathcal{F}}

\allowdisplaybreaks

\pagestyle{headings}

\title[Nonlocal Local Reduction]{A Reduction of the Fractional Calder\'on Problem to the Local Calder\'on Problem by Means of the Caffarelli-Silvestre Extension}

\author[G. Covi]{Giovanni Covi}
\address{Institute for Applied Mathematics, University of Bonn, Endenicher Allee 60, 53115 Bonn, Germany}
\email{giovanni.covi@uni-bonn.de}

\author[T. Ghosh]{Tuhin Ghosh}
\address{Department of Mathematics, University of Bielefeld}
\email{tghosh@math.uni-bielefeld.de}

\author[A. R\"uland]{Angkana R\"uland}
\address{Institute for Applied Mathematics and Hausdorff Center for Mathematics, University of Bonn, Endenicher Allee 60, 53115 Bonn, Germany}
\email{rueland@uni-bonn.de}

\author[G. Uhlmann]{Gunther Uhlmann}
\address{Institute for Advanced Study, Hong Kong University of Science and
Technology, and Department of Mathematics, University of Washington}
\email{gunther@math.washington.edu}

\begin{document}

\begin{abstract}
We relate the (anisotropic) variable coefficient local and nonlocal Calder\'on problems by means of the Caffarelli-Silvestre extension. In particular, we prove that (partial) Dirichlet-to-Neumann data for the fractional Calder\'on problem in three and higher dimensions determine the (full) Dirichlet-to-Neumann data for the local Calder\'on problem. As a consequence, any (variable coefficient) uniqueness result for the local problem also implies a uniqueness result for the nonlocal problem. Moreover, our approach is constructive and associated Tikhonov regularization schemes can be used to recover the data. Finally, we highlight obstructions for reversing this procedure, which essentially consist of two one-dimensional averaging processes.
\end{abstract}

\maketitle
\tableofcontents

\section{Introduction}
\label{sec:intro}

It is the objective of this article to study the relation between the fractional and the classical Calder\'on problems. This had been initiated in the work \cite{GU21} where the solvability of the fractional Calder\'on problem (possibly with variable coefficient metric) is reduced to the solvability of the classical Calder\'on problem. In \cite{GU21} this is achieved by using the relation between the fractional Laplacian and the heat extension as, for instance, highlighted in \cite{ST10}. It is the main aim of this article to revisit the reduction of the fractional Calder\'on problem to the classical Calder\'on problem, adopting a slightly different perspective by relying on the Caffarelli-Silvestre extension \cite{CS07} instead of the heat extension.

Let us first take a heuristic, non-rigorous perspective on this and recall the classical and fractional Calder\'on problems. 

On the one hand, given a bounded, open, sufficiently regular domain $\Omega \subset \R^n$ in the classical Calder\'on problem one seeks to recover an unknown, possibly anisotropic conductivity $a: \Omega \rightarrow \R^{n\times n}_{sym}$ which is positive definite, symmetric and of a suitable regularity from voltage-to-current measurements on the boundary. In other words, one seeks to reconstruct the matrix $a$ given the measurements
\begin{align}
\label{eq:local_DN}
\Lambda: H^{\frac{1}{2}}(\partial \Omega) \rightarrow H^{-\frac{1}{2}}(\partial \Omega),\
f \mapsto \nu \cdot a \nabla u|_{\partial \Omega},
\end{align} 
where $u$ is a solution to the conductivity equation
\begin{align}
\label{eq:local_Cald}
\begin{split}
\nabla \cdot a \nabla u & = 0 \mbox{ in } \Omega,\\
u & = f \mbox{ on } \partial \Omega.
\end{split}
\end{align}
This problem has been studied intensively: {For the isotropic setting landmark results are available, including the seminal results on uniqueness \cite{SU87,HT13,CR16,H15}, stability \cite{A88} and recovery \cite{N88}. In contrast, the anisotropic problem is still widely open and has only been solved under strong structural conditions \cite{LU89,LU01,LTU03}. We refer to the survey article \cite{U09} for further references on the extensive literature on this.}

In the fractional Calder\'on problem, on the other hand, given a domain $\Omega \subset \R^n$ and a disjoint, sufficiently regular domain $W\subset \R^n$ and $s\in (0,1)$, one similarly seeks to recover an unknown conductivity $a: \R^n \rightarrow \R^{n\times n}_{sym}$ which is positive definite, symmetric and of a suitable regularity class, however from the fractional measurements
\begin{align}
\label{eq:DTN_nonlocal}
{\Lambda_s}: \tilde{H}^{s}(W) \rightarrow H^{-s}(W),
\ f \mapsto {(-\nabla \cdot a \nabla )^s u|_{W}},
\end{align}
where  $u$ is a solution to the fractional conductivity equation
\begin{align}
\label{eq:fracCald}
\begin{split}
(-\nabla \cdot a \nabla)^s u & = 0 \mbox{ in } \Omega,\\
u & = f \mbox{ on } \Omega_e:= \R^n \setminus \overline{\Omega}.
\end{split}
\end{align}
Here $(-\nabla \cdot a \nabla)^s $ can (equivalently) be understood spectrally, by means of an integral kernel, or also by means of a Caffarelli-Silvestre type extension \cite{CS07,ST10}.

In what follows, we will mainly adopt the Caffarelli-Silvestre extension perspective. More precisely, for $s\in (0,1)$ the Caffarelli-Silvestre extension formulation for the Dirichlet problem \eqref{eq:fracCald} reads
\begin{align}
\label{eq:CS}
\begin{split}
\nabla \cdot x_{n+1}^{1-2s} \tilde{a}(x') \nabla \tilde{u} & = 0 \mbox{ in } \R^{n+1}_+,\\
\lim\limits_{x_{n+1} \rightarrow 0} x_{n+1}^{1-2s} \p_{n+1} \tilde{u} & = 0 \mbox{ on } \Omega \times \{0\},\\
\tilde{u} & = f \mbox{ on } \Omega_e \times \{0\}. 
\end{split}
\end{align}

Here, as above, $\Omega, W \subset \R^n$ are disjoint open bounded sets, $f\in \tilde{H}^s(W)$ and the solution $\tilde{u} \in \dot{H}^{1}(\R^{n+1}_+,x_{n+1}^{1-2s})$. {The matrix-valued function $\tilde{a} \in C^2(\overline{\R^{n+1}_+},\R^{(n+1)\times (n+1)})$ is of the form 
\begin{align}
\label{eq:matrix}
\tilde{a}(x',x_{n+1}):=\tilde{a}(x'):= \begin{pmatrix} a(x') & 0 \\ 0 & 1 \end{pmatrix},
\end{align}
where $a\in C^2(\R^n,\R^{n\times n}_{sym})$ is assumed to be uniformly elliptic.} Solutions {to problem \eqref{eq:CS}} are understood in terms of the associated bilinear forms. Moreover, the fractional conductivity operator in \eqref{eq:DTN_nonlocal} can then be expressed in terms of a local Dirichlet-to-Neumann map \cite{CS07, ST10}:
\begin{align*}
(-\nabla \cdot a \nabla)^s u = c_s \lim\limits_{x_{n+1} \rightarrow 0} x_{n+1}^{1-2s} \p_{n+1} \tilde{u}(x',x_{n+1}) \in {\dot{H}^{-s}(\R^n)}, 
\end{align*} 
with both objects understood in their weak forms.
In particular, the fractional Calder\'on problem \eqref{eq:fracCald}, \eqref{eq:DTN_nonlocal} can be rephrased in terms of recovering the metric $\tilde{a}$ by means of the weighted Dirichlet-to-Neumann data
\begin{align*}
{\Lambda_s} : \tilde{H}^s(W) \rightarrow H^{-s}(W), \
f \mapsto {c_s \lim\limits_{x_{n+1} \rightarrow 0} x_{n+1}^{1-2s} \p_{n+1} \tilde{u}(x',x_{n+1})|_{W}}.
\end{align*}

\subsection{Main results}
Given the outlined perspective on the fractional Calder\'on problem, we seek to relate the fractional Calder\'on problem in its Caffarelli-Silvestre extension formulation
to the classical local Calder\'on problem given in \eqref{eq:local_Cald}, \eqref{eq:local_DN}. This is achieved by considering the following function 
\begin{align}
\label{eq:local}
\begin{split}
v(x') & = \int\limits_{0}^{\infty} x_{n+1}^{1-2s} \tilde{u}(x',x_{n+1}) dx_{n+1}, \mbox{ for } x' \in  \Omega,
\end{split}
\end{align}
which will be shown to be a solution to the classical Calder\'on problem with metric $a$, given the nonlocal data for $u$.
As above this is understood in its weak form. 

More precisely, as one of our main results, we obtain the following relation between the two Calder\'on type problems:

\begin{thm}
\label{thm:main}
{Let $n\in \N$, $n\geq 3$}.
Let $\Omega, W \subset \R^n$ be bounded Lipschitz sets with $\overline{\Omega}\cap \overline{W} = \emptyset$. Let $s\in (0,1)$, let {$\tilde{a} \in C^2(\overline{\R^{n+1}_+},\R^{(n+1)\times (n+1)})$} be of the form $\tilde{a}(x')= \begin{pmatrix} a(x') & 0 \\ 0 & 1 \end{pmatrix}$ where $a\in {C^2(\R^n,\R^{n\times n}_{sym})}$ is uniformly elliptic and such that $a\equiv Id$ in $\Omega_e$. Let $\tilde{u}$ be a weak solution to \eqref{eq:CS}. Then, we have that
\begin{align}
\label{eq:v}
v(x'):=\int\limits_{0}^{\infty}  x_{n+1}^{1-2s} \tilde{u}(x',x_{n+1}) dx_{n+1} \in H^1(\Omega),
\end{align}
and the function $v$ is a weak solution to \eqref{eq:local_Cald}. Moreover, the map 
\begin{align}
\label{eq:DNs}
\Lambda_s: \tilde{H}^s(W) \rightarrow H^{-s}(W), \ f \mapsto \lim\limits_{x_{n+1}\rightarrow 0} x_{n+1}^{1-2s} \p_{n+1}\tilde{u}|_{W }
\end{align}
determines the map
\begin{align}
\label{eq:loc}
\Lambda: H^{\frac{1}{2}}(\partial \Omega) \rightarrow H^{- \frac{1}{2}}(\partial \Omega), \ g \mapsto \nu \cdot \tilde{a} \nabla v|_{\partial \Omega}.
\end{align}
\end{thm}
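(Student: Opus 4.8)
The plan is to show that the averaged function $v$ defined in \eqref{eq:v} solves the conductivity equation in $\Omega$ by integrating the Caffarelli-Silvestre equation in the $x_{n+1}$ direction, and then to argue that the nonlocal data $\Lambda_s$ gives us enough information to recover the Cauchy data of $v$ on $\partial\Omega$. First I would establish the regularity claim $v \in H^1(\Omega)$: since $\tilde{u} \in \dot{H}^1(\R^{n+1}_+, x_{n+1}^{1-2s})$, one uses a weighted Hardy/trace inequality together with interior estimates for the degenerate elliptic equation \eqref{eq:CS} (the $A_2$-Muckenhoupt weight theory of Fabes–Kenig–Serapioni) to control $\int_0^\infty x_{n+1}^{1-2s}|\tilde u|^2\, dx_{n+1}$ and $\int_0^\infty x_{n+1}^{1-2s}|\nabla' \tilde u|^2\, dx_{n+1}$ locally uniformly in $x'\in\Omega$; the restriction $n\geq 3$ enters here to make the relevant weighted integrals converge (the decay of $\tilde u$ as $x_{n+1}\to\infty$ and the integrability near $x_{n+1}=0$). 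Differentiating under the integral sign in $x'$ then gives $\nabla' v(x') = \int_0^\infty x_{n+1}^{1-2s}\nabla'\tilde u\, dx_{n+1}$, which lies in $L^2(\Omega)$.

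Next I would verify the equation. Testing the weak formulation of \eqref{eq:CS} against functions of the form $\phi(x')\psi(x_{n+1})$ with $\psi$ approximating $\mathbf{1}_{(0,\infty)}$, and using the Neumann-type condition $\lim_{x_{n+1}\to 0} x_{n+1}^{1-2s}\p_{n+1}\tilde u = 0$ on $\Omega\times\{0\}$ together with the decay as $x_{n+1}\to\infty$, the $x_{n+1}$-derivative terms integrate to zero by the fundamental theorem of calculus. What survives is exactly $\int_\Omega a(x')\nabla' v \cdot \nabla'\phi\, dx' = 0$ for all $\phi\in C_c^\infty(\Omega)$, i.e.\ $v$ is a weak solution of $\nabla\cdot a\nabla v = 0$ in $\Omega$. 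One must be careful that the cutoff in $x_{n+1}$ is admissible as a test function for the degenerate form and that the boundary terms at $0$ and $\infty$ genuinely vanish — this is where the weighted function space $\dot H^1(\R^{n+1}_+, x_{n+1}^{1-2s})$ and the form of $\tilde a$ in \eqref{eq:matrix} (block-diagonal, with the $(n+1,n+1)$-entry equal to $1$ and the off-diagonal entries zero) are used decisively, since then the mixed derivative terms decouple.

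Finally, for the determination statement, I would argue as follows. Given $\Lambda_s$ on $W$, and using the exterior condition $a\equiv \mathrm{Id}$ in $\Omega_e$ together with $\overline\Omega\cap\overline W=\emptyset$, one can prescribe arbitrary exterior data $f\in\tilde H^s(W)$, solve \eqref{eq:CS}, form $v$, and read off its Dirichlet trace $v|_{\partial\Omega}$ and Neumann trace $\nu\cdot a\nabla v|_{\partial\Omega}$. The key point is that by varying $f$ over $\tilde H^s(W)$ one obtains a sufficiently rich (in fact dense) set of Cauchy data $(v|_{\partial\Omega},\, \nu\cdot a\nabla v|_{\partial\Omega})$ to recover the full local Dirichlet-to-Neumann map $\Lambda$ in \eqref{eq:loc}; this requires a Runge-type approximation / density argument showing that the traces $v|_{\partial\Omega}$ obtained this way are dense in $H^{1/2}(\partial\Omega)$, which in turn rests on unique continuation for the fractional operator (or equivalently for the Caffarelli–Silvestre extension). \textbf{The main obstacle} I anticipate is precisely this last density step: one must carefully track how the averaging in \eqref{eq:v} interacts with the exterior-to-$\partial\Omega$ propagation, i.e.\ prove that the set $\{v|_{\partial\Omega} : f\in\tilde H^s(W)\}$ is dense in $H^{1/2}(\partial\Omega)$, and simultaneously that the map $f\mapsto \Lambda_s f$ determines the pair of traces of $v$ consistently. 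The weighted-integrability bookkeeping for $v\in H^1(\Omega)$ and the vanishing of the boundary terms at $x_{n+1}=0,\infty$ are technically delicate but, given the $A_2$-weight machinery available from the cited extension literature, should be routine compared to the Runge approximation argument.
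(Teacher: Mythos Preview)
Your overall architecture matches the paper's: derive the equation for $v$ by testing \eqref{eq:CS} against tensor products $\varphi(x')\eta_k(x_{n+1})$ with a vertical cut-off (this is exactly Theorem~\ref{prop:equation_1}/\ref{prop:equation_2}), then combine a Runge--type density statement for $\{v_f|_{\partial\Omega}\}$ (Proposition~\ref{prop:density}) with unique continuation in $\Omega_e\times\R_+$---where $a\equiv\Id$ is known---to pass from $(f,\Lambda_s f)$ on $W$ to $(v_f|_{\partial\Omega},\partial_\nu v_f|_{\partial\Omega})$. You correctly identify the density step as the crux.

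The genuine gap is in your regularity argument. Controlling $\int_0^\infty x_{n+1}^{1-2s}|\nabla'\tilde u(x',x_{n+1})|^2\,dx_{n+1}$ locally uniformly in $x'$ does \emph{not} yield $\nabla' v\in L^2(\Omega)$: the obvious Cauchy--Schwarz bound
\[
|\nabla' v(x')|\le\Big(\int_0^\infty x_{n+1}^{1-2s}\,dx_{n+1}\Big)^{1/2}\Big(\int_0^\infty x_{n+1}^{1-2s}|\nabla'\tilde u|^2\,dx_{n+1}\Big)^{1/2}
\]
fails because the first factor diverges, and the Fabes--Kenig--Serapioni $A_2$ theory provides only local H\"older/Harnack estimates, not the decay as $x_{n+1}\to\infty$ that you need. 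The paper instead uses the heat-semigroup representation of the Caffarelli--Silvestre--Stinga--Torrea Poisson kernel to obtain pointwise bounds $|\nabla'\tilde u(x',x_{n+1})|\lesssim x_{n+1}^{-n-1}\|u\|_{L^1}$ (Lemma~\ref{CS-estimates-vertical}), which makes $\int_0^\infty x_{n+1}^{1-2s}|\nabla'\tilde u|\,dx_{n+1}$ convergent directly; the $H^1(\Omega)$ bound then comes from Young's inequality and, for $s\ge\tfrac12$, a more delicate subtraction argument invoking the $W^{s,2}$ seminorm of $u$ (Proposition~\ref{prop:reg}, Step~3). Some quantitative decay input of this kind is unavoidable here.

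A minor point: the restriction $n\ge 3$ does not enter in the regularity of $v$ as you suggest, but rather in the density step, specifically in the solvability of the adjoint problem $\nabla'\cdot a\nabla' u_1=\psi$ in $\dot H^1(\R^n)\cap L^{2n/(n-2)}(\R^n)$ (Lemma~\ref{well-posed-adjoint}) and in the subsequent decay bookkeeping of the Hahn--Banach argument.
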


As in \cite{GU21} a direct consequence of Theorem \ref{thm:main} is the reduction of the nonlocal Calder\'on problem to the local one. Hence, uniqueness results for the local Calder\'on problem, e.g. for analytic manifolds or for the reconstruction of conformal factors in the presence of limiting Carleman weights, directly imply solvability of the nonlocal problem.

\begin{thm}
\label{thm:consequences}
{Let $n\in \N$, $n\geq 3$}.
Let $\Omega, W \subset \R^n$ be bounded Lipschitz sets with $\overline{\Omega}\cap \overline{W} = \emptyset$. Let $s\in (0,1)$, and 
let {$\tilde{a} \in C^2(\overline{\R^{n+1}_+},\R^{(n+1)\times (n+1)})$} be of the form $\tilde{a}(x')= \begin{pmatrix} a(x') & 0 \\ 0 & 1 \end{pmatrix}$ where $a\in {C^2(\R^n,\R^{n\times n}_{sym})}$ is uniformly elliptic and such that $a\equiv Id$ in $\Omega_e$. Assume that the local Dirichlet-to-Neumann map $\Lambda:H^{\frac{1}{2}}(\partial \Omega) \rightarrow H^{- \frac{1}{2}}(\partial \Omega)$ uniquely determines the coefficients $a$ in the classical Calder\'on problem \eqref{eq:local_Cald}. Then the nonlocal Dirichlet-to-Neumann  map  $\Lambda_s: \tilde{H}^s(W) \rightarrow H^{-s}(W)$ uniquely determines the coefficients $a$ in the fractional Calder\'on problem \eqref{eq:fracCald}.
\end{thm}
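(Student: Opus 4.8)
The plan is to obtain Theorem~\ref{thm:consequences} as an immediate corollary of the reduction established in Theorem~\ref{thm:main}: the argument is a short ``data determines data'' chain, followed by an appeal to the assumed uniqueness for the local problem, and requires no further hard analysis beyond what is already packaged into Theorem~\ref{thm:main}.

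Concretely, I would argue by comparison. Suppose $a_1, a_2\in C^2(\R^n,\R^{n\times n}_{sym})$ are two admissible conductivities in the sense of the theorem, i.e.\ both uniformly elliptic and both satisfying $a_j\equiv Id$ in $\Omega_e$, and suppose that the associated nonlocal Dirichlet-to-Neumann maps agree, $\Lambda_{s,1}f=\Lambda_{s,2}f$ for all $f\in\tilde H^s(W)$. For each $j\in\{1,2\}$ and each exterior datum $f$, let $\tilde u_j$ denote the weak solution of \eqref{eq:CS} with matrix $\tilde a_j$, and let $v_j$ be the associated averaged function defined by \eqref{eq:v}.

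Next, I would apply Theorem~\ref{thm:main} to each index $j$. It guarantees that $v_j\in H^1(\Omega)$ is a weak solution of the conductivity equation \eqref{eq:local_Cald} with coefficient $a_j$, and, crucially, that $\Lambda_{s,j}$ determines the full local Dirichlet-to-Neumann map $\Lambda_j\colon H^{\frac12}(\partial\Omega)\to H^{-\frac12}(\partial\Omega)$, $g\mapsto\nu\cdot\tilde a_j\nabla v_j|_{\partial\Omega}$. The key feature to extract from the proof of Theorem~\ref{thm:main} is that the passage from $\Lambda_{s,j}$ to $\Lambda_j$ proceeds through the averaging \eqref{eq:local} together with a boundary recovery step that use only the operator $\Lambda_{s,j}$ and no additional a priori knowledge of $a_j$; in other words, the assignment $\Lambda_s\mapsto\Lambda$ is canonical, i.e.\ well-defined independently of the coefficient. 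Hence the hypothesis $\Lambda_{s,1}=\Lambda_{s,2}$ forces $\Lambda_1=\Lambda_2$.

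Finally, the standing assumption of the theorem---that the full local Dirichlet-to-Neumann map uniquely determines the conductivity in the classical Calder\'on problem \eqref{eq:local_Cald}---yields $a_1=a_2$ in $\Omega$, and since $a_1\equiv Id\equiv a_2$ in $\Omega_e$ this gives $a_1=a_2$ on all of $\R^n$. Thus $\Lambda_s$ uniquely determines $a$ in the fractional Calder\'on problem, as claimed. I do not expect a genuine obstacle here: all the substantial work---well-posedness of \eqref{eq:CS} in $\dot H^{1}(\R^{n+1}_+,x_{n+1}^{1-2s})$, the weighted integrability making \eqref{eq:v} well-defined, the weak formulation showing that $v$ solves the conductivity equation, and the boundary recovery---is already contained in Theorem~\ref{thm:main}. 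The only points deserving care are bookkeeping ones, namely checking that the class of coefficients permitted in Theorem~\ref{thm:main} coincides with the one for which the local uniqueness is assumed, and making explicit (by inspecting the proof of Theorem~\ref{thm:main}) that the reduction map $\Lambda_s\mapsto\Lambda$ does not covertly depend on $a$---which is precisely the meaning of the word ``determines'' in Theorem~\ref{thm:main}.
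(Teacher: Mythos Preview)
Your proposal is correct and follows essentially the same route as the paper: take two admissible coefficients with equal nonlocal Dirichlet-to-Neumann maps, invoke Theorem~\ref{thm:main} to conclude that the corresponding local Dirichlet-to-Neumann maps coincide, and then apply the assumed local uniqueness. Your added remark that the reduction $\Lambda_s\mapsto\Lambda$ must be canonical (i.e., not covertly depend on $a$) is a fair point that the paper's proof leaves implicit in the word ``determines''.
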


We further stress that it is also possible to apply analogous arguments to fractional Calder\'on type problems in which the operator 
\begin{align*}
(-\nabla \cdot a \nabla)^s , \qquad s\in (0,1),
\end{align*}
is replaced by operators of the form
\begin{align*}
(-\nabla \cdot a \nabla - ( i b \cdot \nabla + i \nabla \cdot b) + c)^s 
\end{align*}
for $b: \R^n \rightarrow \R^n$, $c: \R^n \rightarrow \R$ sufficiently regular. {In particular, uniqueness results for associated local Calder\'on type problems directly imply uniqueness results for the nonlocal analogue.}\\

Moreover, the determination of the local data from the nonlocal ones is completely constructive. Indeed, the functions $(\tilde{u}, \p_{\nu} \tilde{u})$ can be obtained on $\partial \Omega \times \R_+$ by unique continuation from the data $(f,\Lambda_s f)$. This then yields the functions $(v,\p_{\nu}v)$ by integration. The unique continuation result can, for instance, be formulated by means of the Tikhonov regularization procedure.

\begin{prop}
\label{prop:Tikhonov}
{Let $n\in \N$, $n\geq 3$}.
Let $\Omega, W \subset \R^n$ be bounded Lipschitz sets with $\overline{\Omega}\cap \overline{W} = \emptyset$. Let $s\in (0,1)$, $\varepsilon > 0$, and assume that {$\tilde{a} \in C^2(\overline{\R^{n+1}_+},\R^{(n+1)\times (n+1)})$} is of the form $\tilde{a}(x')= \begin{pmatrix} a(x') & 0 \\ 0 & 1 \end{pmatrix}$ where $a\in {C^2(\R^n,\R^{n\times n}_{sym})}$ is uniformly elliptic and such that $a\equiv Id$ in $\Omega_e$. Define the operator 
\begin{align*}
A: \ &\mathcal{V} \rightarrow \tilde{H}^{s-\epsilon}(W) \times H^{-s-\epsilon}(W)  ,\\
&\tilde{u} \mapsto (\tilde u(\cdot,0), \lim\limits_{x_{n+1} \rightarrow 0} x_{n+1}^{1-2s}\p_{n+1} \tilde{u}),
\end{align*}
where $$\mathcal{V}:=\{\tilde{u} \in \dot{H}^{1}(\Omega_e \times \R_+, x_{n+1}^{1-2s}):\;  \nabla \cdot x_{n+1}^{1-2s} \nabla \tilde{u} = 0 \mbox{ in } \Omega_e \times \R_+, \;  \supp(\tilde{u}(\cdot,0)) \subset \overline{W}\}.$$
Finally, let $(f,\Lambda_sf)\in \tilde{H}^{s}(W) \times H^{-s}(W)$. Then for each $\alpha > 0$ there exists a unique minimizer $\tilde u_\alpha \in \mathcal V$ of the functional
\begin{align*}
J_{\alpha}(\tilde{u}) 
:&= \|A \tilde u - (f,\Lambda_sf) \|_{H^{s-\epsilon}(W)\times H^{-s-\epsilon}(W)}^2 +  \alpha\|x_{n+1}^{\frac{1-2s}{2}} \nabla \tilde{u}\|_{L^2(\Omega_e \times \R_+)}^2
\\ & = \|\tilde{u}(\cdot,0)-f\|_{H^{s-\epsilon}(W)}^2 
+ \|\lim\limits_{x_{n+1} \rightarrow 0} x_{n+1}^{1-2s}\p_{n+1} \tilde{u}- \Lambda_sf\|_{H^{-s-\epsilon}(W)}^2 +  \alpha\|x_{n+1}^{\frac{1-2s}{2}} \nabla \tilde{u}\|_{L^2(\Omega_e \times \R_+)}^2.
\end{align*}
In particular, for each $(f,\Lambda_s f)\in \tilde{H}^{s}(W) \times H^{-s}(W)$ it holds that $A \tilde{u}_{\alpha} \rightarrow (f,\Lambda_s f)$ in $\tilde{H}^{s-\epsilon}(W) \times H^{-s-\epsilon}(W)$.  
\end{prop}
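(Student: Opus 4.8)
The plan is to recast $J_\alpha$ as the classical Tikhonov functional of a bounded linear operator between Hilbert spaces, so that existence, uniqueness and convergence of the minimizers follow from the standard regularization machinery; the only problem-specific work lies in setting up the space $\mathcal V$ correctly and in verifying that $A$ is bounded. First I would fix the functional-analytic framework: equip $H:=\dot H^1(\Omega_e\times\R_+,x_{n+1}^{1-2s})$ with the inner product $\langle u,w\rangle_H:=\int_{\Omega_e\times\R_+}x_{n+1}^{1-2s}\,\nabla u\cdot\nabla w\,dx$, so that $\|u\|_H^2$ is exactly the regularization term in $J_\alpha$ (the two displayed forms of $J_\alpha$ coincide by the definition of $A$ and of the product norm). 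For $s\in(0,1)$ the weight $x_{n+1}^{1-2s}$ is a Muckenhoupt $A_2$ weight and a weighted Hardy/Poincar\'e inequality is available, so $(H,\langle\cdot,\cdot\rangle_H)$ is a Hilbert space. Next I would check that $\mathcal V\subseteq H$ is a closed subspace: the equation $\nabla\cdot x_{n+1}^{1-2s}\nabla\tilde u=0$ in the \emph{open} set $\Omega_e\times\R_+$ means $\langle\tilde u,\psi\rangle_H=0$ for every $\psi\in C_c^\infty(\Omega_e\times\R_+)$, an intersection of closed hyperplanes, while the constraint $\supp(\tilde u(\cdot,0))\subseteq\overline W$ is the preimage under the (bounded) trace map of the closed subspace of distributions supported in the closed set $\overline W\Subset\Omega_e$. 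Hence $\mathcal V$ is itself a Hilbert space with norm $\|\cdot\|_H$, matching the abstract Tikhonov setting.

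Second, I would verify that $A\colon\mathcal V\to\tilde H^{s-\epsilon}(W)\times H^{-s-\epsilon}(W)$ is bounded and linear. Linearity is clear. For the first component, by the trace theory for the Caffarelli--Silvestre extension the trace $\tilde u(\cdot,0)$ lies in $\dot H^s(\R^n)$ with $\|\tilde u(\cdot,0)\|_{\dot H^s(\R^n)}\lesssim\|\tilde u\|_H$, and the support constraint in $\mathcal V$ promotes this to $\tilde u(\cdot,0)\in\tilde H^s(W)$ with a uniform bound, in particular $\tilde u(\cdot,0)\in\tilde H^{s-\epsilon}(W)$. For the second component, $\tilde u\in\mathcal V$ solves the \emph{constant-coefficient} degenerate equation $\nabla\cdot x_{n+1}^{1-2s}\nabla\tilde u=0$ in $\Omega_e\times\R_+$ (here one uses $\tilde a\equiv\Id$ there, which follows from $a\equiv\Id$ in $\Omega_e$) with finite weighted energy; therefore the weighted co-normal derivative $\lim_{x_{n+1}\to 0}x_{n+1}^{1-2s}\p_{n+1}\tilde u$ is a well-defined distribution on $\Omega_e\times\{0\}$ via the weak (Green's) formulation, and interior regularity for such extensions on $W\Subset\Omega_e$ places it in $H^{-s}(W)$, again with norm $\lesssim\|\tilde u\|_H$, hence in $H^{-s-\epsilon}(W)$. (The $\epsilon$-loss moreover makes $A$ compact, although this is not needed for the present statement.)

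Finally I would invoke the abstract regularization theory. Writing $y:=(f,\Lambda_sf)$ and viewing it in $\tilde H^{s-\epsilon}(W)\times H^{-s-\epsilon}(W)$, the functional $J_\alpha(\tilde u)=\|A\tilde u-y\|^2+\alpha\|\tilde u\|_H^2$ is strictly convex (a convex quadratic plus the strictly convex term $\alpha\|\cdot\|_H^2$), coercive since $J_\alpha(\tilde u)\geq\alpha\|\tilde u\|_H^2$, and weakly lower semicontinuous ($A$ is bounded linear, hence weakly continuous); the direct method gives a minimizer, and strict convexity makes it unique, characterized by the normal equation $(A^*A+\alpha\,\Id)\tilde u_\alpha=A^*y$, i.e. $\tilde u_\alpha=(A^*A+\alpha\,\Id)^{-1}A^*y\in\mathcal V$. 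For the convergence as $\alpha\to0^+$ I would note that $y$ in fact lies in the range of $A$: if $\tilde u^\ast$ is a weak solution of \eqref{eq:CS} with exterior datum $f$ (which exists by energy minimization), then $\tilde u^\ast|_{\Omega_e\times\R_+}\in\dot H^1(\Omega_e\times\R_+,x_{n+1}^{1-2s})$ solves $\nabla\cdot x_{n+1}^{1-2s}\nabla\tilde u^\ast=0$ in $\Omega_e\times\R_+$ and has trace on $\Omega_e\times\{0\}$ equal to $f$, which is supported in $\overline W$; hence $\tilde u^\ast|_{\Omega_e\times\R_+}\in\mathcal V$ and $A(\tilde u^\ast|_{\Omega_e\times\R_+})=(f,\Lambda_sf)=y$. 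Since $y\in\operatorname{Ran}(A)$, the classical convergence theorem for Tikhonov regularization of linear operators yields $\tilde u_\alpha\to A^\dagger y$ (the minimum-$\|\cdot\|_H$-norm solution of $A\tilde u=y$) as $\alpha\to0^+$, and hence $A\tilde u_\alpha\to AA^\dagger y=y$ in $\tilde H^{s-\epsilon}(W)\times H^{-s-\epsilon}(W)$, which is the assertion.

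The main obstacle is the second step: making $A$ well-defined and bounded, concretely giving meaning to the weighted normal derivative $\lim_{x_{n+1}\to0}x_{n+1}^{1-2s}\p_{n+1}\tilde u$ as an element of $H^{-s-\epsilon}(W)$ for an \emph{arbitrary} $\tilde u\in\mathcal V$ --- a finite-energy solution of the degenerate equation on $\Omega_e\times\R_+$ subject to no boundary condition on $\Omega\times\{0\}$ --- with the bound by $\|\tilde u\|_H$. This is where the Caffarelli--Silvestre trace theory and interior elliptic regularity on $W\Subset\Omega_e$ enter, and where the small loss $\epsilon$ is genuinely convenient. The closedness of $\mathcal V$ and the existence, uniqueness and convergence of $\tilde u_\alpha$ are then standard Hilbert-space regularization theory.
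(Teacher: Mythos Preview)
Your argument is correct and, like the paper, reduces the statement to abstract Tikhonov regularization theory; the difference lies in which ingredients you foreground. The paper first verifies that $A$ is \emph{injective} (via unique continuation: if $A\tilde u=0$ then both Cauchy data of $\tilde u$ vanish on $W$, forcing $\tilde u\equiv 0$ in $\Omega_e\times\R_+$) and \emph{compact} (the $\epsilon$-loss in the Sobolev index makes the target embedding compact), and then simply quotes Theorems~4.13--4.14 of Colton--Kress for both the existence/uniqueness of $\tilde u_\alpha$ and the regularization-scheme property. You instead run the direct method explicitly (strict convexity plus coercivity plus weak lower semicontinuity) without using injectivity or compactness, and for the convergence you supply the one point the paper leaves implicit: you show that $y=(f,\Lambda_sf)$ actually lies in $\operatorname{Ran}(A)$ by exhibiting the preimage $\tilde u^\ast|_{\Omega_e\times\R_+}$, where $\tilde u^\ast$ is the global Caffarelli--Silvestre extension solving \eqref{eq:CS}. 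This is a genuine addition, since the regularization-scheme property $R_\alpha A\varphi\to\varphi$ alone does not yield $A\tilde u_\alpha\to y$ until one knows $y\in\overline{\operatorname{Ran}(A)}$; your observation makes the limit transparent and identifies what $\tilde u_\alpha$ converges to (the minimum-norm solution $A^\dagger y$). The paper's route is terser by outsourcing to Colton--Kress, while yours is more self-contained and dispenses with the unique-continuation step.
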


We expect that the described technique can also be made quantitative by invoking quantitative unique continuation results. We plan to address this in future work.

\subsection{Obstructions on the reversal of the outlined procedure}
The outlined procedure provides a clear relation between the local and fractional Calder\'on problems of the form described above. It allows to deduce uniqueness results for the \emph{fractional} Calder\'on problem from possibly available uniqueness results for the \emph{local} Calder\'on problem. A natural question deals with the reversibility of this procedure. Due to the different information content of the local versus the fractional Calder\'on problems -- in that the Dirichlet-to-Neumann operator for the local problem formally determines $2n-2$ degrees of freedom while the nonlocal one determines $2n$ degrees of freedom -- it is not expected that this can be fully reversed. Our approach accounts precisely for this mismatch of formally determined degrees of freedom: In carrying out the integration procedure which allows to pass from the function $\tilde{u}$ to the function $v$ and by restricting to the boundary of $\partial \Omega$ we lose exactly the given degrees of freedom that distinguish the local and fractional data. 
We reformulate this precisely:

\begin{prop}
\label{prop:density_inversion}
{Let $n\in \N$, $n\geq 3$ and let $s\in (0,1)$}.
Let $\Omega, W \subset \R^n$ be bounded Lipschitz sets with $\overline{\Omega}\cap \overline{W} = \emptyset$. Let $s\in (0,1)$, let {$\tilde{a} \in C^2(\overline{\R^{n+1}_+},\R^{(n+1)\times (n+1)})$} be of the form $\tilde{a}(x')= \begin{pmatrix} a(x') & 0 \\ 0 & 1 \end{pmatrix}$ where $a\in {C^2(\R^n,\R^{n\times n}_{sym})}$ is uniformly elliptic and such that $a\equiv Id$ in $\Omega_e$.
Let
\begin{align*}
\mathcal{C}_{s,a} &:= \left\{(f, \Lambda_s(f)): \ f\in \tilde{H}^{s}(W) \right\} \subset \tilde{H}^{s}(W) \times H^{-s}(W),\\
\mathcal{C}_{a}&:= \left\{(f, \Lambda(f)): \ f\in {H}^{\frac{1}{2}}(\partial \Omega) \right\} \subset H^{\frac{1}{2}}(\partial \Omega) \times H^{-\frac{1}{2}}(\partial \Omega).
\end{align*}
Then, there exists a bounded linear operator $T: \mathcal{C}_{s,a} \rightarrow \mathcal{C}_{a}$ such that
\begin{align*}
\overline{T(\mathcal{C}_{s,a})}^{H^{\frac{1}{2}}(\partial \Omega) \times H^{-\frac{1}{2}}(\partial \Omega)} = \mathcal{C}_{a} .
\end{align*}
Here the operator $T:\mathcal{C}_{s,a} \rightarrow \mathcal{C}_s$ is given by
\begin{align*}
T(f,\Lambda_s(f)) := (v|_{\partial \Omega}, \p_{\nu} v|_{\partial \Omega}),
\end{align*}
with $\nu$ denoting the outer normal to $\partial \Omega$ and $v(x'):= \int\limits_{0}^{\infty} x_{n+1}^{1-2s} \tilde{u}_f(x',x_{n+1}) dx_{n+1}$; the function $\tilde{u}_f$ being the Caffarelli-Silvestre extension associated with the solution $u_f$ of \eqref{eq:fracCald} with data $f$. 

\end{prop}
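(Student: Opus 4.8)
The plan is to establish the two claims of Proposition~\ref{prop:density_inversion} separately: first, that $T$ is a well-defined bounded linear operator from $\mathcal{C}_{s,a}$ into $\mathcal{C}_a$, and second, that its range is dense in $\mathcal{C}_a$. The well-definedness is essentially a restatement of Theorem~\ref{thm:main}: given $(f,\Lambda_s(f))\in\mathcal{C}_{s,a}$, the Caffarelli-Silvestre extension $\tilde{u}_f$ is determined, the function $v=\int_0^\infty x_{n+1}^{1-2s}\tilde{u}_f\,dx_{n+1}$ lies in $H^1(\Omega)$ and solves \eqref{eq:local_Cald} with boundary data $g:=v|_{\partial\Omega}$, so that $(v|_{\partial\Omega},\p_\nu v|_{\partial\Omega})=(g,\Lambda(g))\in\mathcal{C}_a$. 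For boundedness, I would track the quantitative dependence in the trace and integration estimates already used in the proof of Theorem~\ref{thm:main}: the map $(f,\Lambda_sf)\mapsto\tilde{u}_f$ is bounded from the graph norm on $\mathcal{C}_{s,a}$ into $\dot{H}^1(\R^{n+1}_+,x_{n+1}^{1-2s})$ (this is just continuity of the forward solution operator plus the energy identity relating the Dirichlet energy to the pairing $\langle f,\Lambda_sf\rangle$), the weighted integration map $\tilde{u}\mapsto v$ is bounded into $H^1(\Omega)$ by the weighted Hardy/Cauchy--Schwarz estimate, and finally the trace $v\mapsto(v|_{\partial\Omega},\p_\nu v|_{\partial\Omega})$ is bounded from $\{v\in H^1(\Omega):\nabla\cdot a\nabla v=0\}$ into $H^{1/2}(\partial\Omega)\times H^{-1/2}(\partial\Omega)$, the Neumann trace being controlled via the weak formulation. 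Composing gives a bounded operator. Linearity is immediate since all three steps are linear in the data.

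For the density statement, the key observation is that $\mathcal{C}_a=\{(g,\Lambda(g)):g\in H^{1/2}(\partial\Omega)\}$ is the graph of a bounded operator $\Lambda$, hence the closure of $T(\mathcal{C}_{s,a})$ in $H^{1/2}(\partial\Omega)\times H^{-1/2}(\partial\Omega)$ equals $\mathcal{C}_a$ if and only if the set of Dirichlet data $\{v|_{\partial\Omega}:(f,\Lambda_sf)\in\mathcal{C}_{s,a}\}$ is dense in $H^{1/2}(\partial\Omega)$; continuity of $\Lambda$ then automatically carries the Neumann component along. So the real content is to show that, as $f$ ranges over $\tilde{H}^s(W)$, the boundary values $g=v|_{\partial\Omega}$ of the averaged extensions form a dense subset of $H^{1/2}(\partial\Omega)$. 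I would prove this by a Hahn--Banach/duality argument: suppose $\psi\in H^{-1/2}(\partial\Omega)$ annihilates all such $g$, i.e. $\langle\psi,v_f|_{\partial\Omega}\rangle=0$ for every $f\in\tilde{H}^s(W)$, and show $\psi=0$. Let $w\in H^1(\Omega)$ solve $\nabla\cdot a\nabla w=0$ in $\Omega$ with $\nu\cdot a\nabla w|_{\partial\Omega}=\psi$ on $\partial\Omega$ (solvability up to the usual compatibility condition, which one handles by working modulo constants or noting $a\equiv\Id$ near $\partial\Omega$). Integrating by parts, $0=\langle\psi,v_f|_{\partial\Omega}\rangle=\int_\Omega a\nabla w\cdot\nabla v_f$, and using the equation $\nabla\cdot a\nabla v_f=0$ with a free boundary term, this pairing can be rewritten against the Neumann data of $v_f$, which via Theorem~\ref{thm:main} is expressible through $\Lambda_s f$ and the weighted normal derivative of $\tilde{u}_f$. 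The point is then to invoke the known \emph{global} uniqueness/Runge-type property of the fractional problem: the solutions $u_f$ (equivalently their extensions $\tilde u_f$), as $f$ varies over $\tilde H^s(W)$, are dense in the natural solution space, so the vanishing of all these pairings forces the relevant boundary function built from $w$ to vanish, hence $\psi=0$.

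The main obstacle I anticipate is precisely this last density/Runge step: one must translate the statement ``$v_f|_{\partial\Omega}$ dense in $H^{1/2}(\partial\Omega)$'' into a statement about the fractional solutions $u_f$, and the passage is not a clean bijection because the averaging $\tilde u_f\mapsto v_f$ and the restriction to $\partial\Omega$ each collapse one scalar degree of freedom (exactly the $2n$ versus $2n-2$ mismatch emphasized in the introduction). Concretely, I would need a Runge approximation theorem for the Caffarelli--Silvestre extended operator $\nabla\cdot x_{n+1}^{1-2s}\tilde a\nabla\,\cdot=0$ in a neighborhood of $\overline\Omega\times\R_+$ with exterior data supported in $\overline W$ — this is the extension-side counterpart of the well-established fractional Runge property (cf.\ the unique continuation principle for the Caffarelli--Silvestre extension) — and then argue that approximating an arbitrary bulk solution $\tilde w$ of the extended equation by $\tilde u_f$'s yields approximation of $v$-boundary data; the constant left undetermined by the one-dimensional averaging can be matched by hand because adding a suitable explicit harmonic-type correction (an $x_{n+1}$-independent solution) to $\tilde w$ changes $v$ by a controllable amount. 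Assembling these ingredients carefully, one obtains that the orthogonal complement of $T(\mathcal{C}_{s,a})$ in $\mathcal{C}_a$ is trivial, which with boundedness of $\Lambda$ yields $\overline{T(\mathcal{C}_{s,a})}=\mathcal{C}_a$ as claimed.
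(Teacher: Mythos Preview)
Your high-level structure matches the paper exactly: reduce to showing that $\{v_f|_{\partial\Omega}:f\in\tilde H^s(W)\}$ is dense in $H^{1/2}(\partial\Omega)$, and then let the continuity of $\Lambda$ carry the Neumann component. This is precisely Proposition~\ref{prop:density} combined with a one-line graph argument, and your remarks on boundedness of $T$ are also in line with what the paper does.

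The gap is in your duality argument. Taking $\psi\in H^{-1/2}(\partial\Omega)$ and solving a Neumann problem $\nabla\cdot a\nabla w=0$ in $\Omega$, $\nu\cdot a\nabla w=\psi$, leads after integration by parts only to $\langle \partial_\nu v_f, w|_{\partial\Omega}\rangle=0$ for all $f$; since $\partial_\nu v_f=\Lambda(v_f|_{\partial\Omega})$, this is essentially the same density statement you started with, pushed through the DtN map, and you have not exited the circle. The paper breaks out of this by lifting the adjoint problem to the upper half space: for $\psi\in\tilde H^{-1}(\Omega)$ it constructs a solution $w$ of $\nabla\cdot x_{n+1}^{1-2s}\tilde a\nabla w = x_{n+1}^{1-2s}\psi$ in $\R^{n+1}_+$ with mixed boundary data (Lemma~\ref{well-posed-adjoint}), tests the extension equation for $\tilde u_f$ against $w$, and obtains $\lim_{x_{n+1}\to 0}x_{n+1}^{1-2s}\partial_{n+1}w=0$ on $W$; unique continuation for the extension then kills $w$ on all of $\Omega_e\times\R_+$, and a second testing against an arbitrary $v\in S$ gives $\psi(v)=0$. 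The nontrivial point you are missing is that the right-hand side $x_{n+1}^{1-2s}\psi$ has no vertical decay, so the adjoint problem cannot be solved by a naive energy argument; the paper writes the solution explicitly as $w=\tilde u_1 - P_s u_1 + u_2$ with $\tilde u_1$ the $x_{n+1}$-independent solution of $Lu_1=\psi$ in $\R^n$, and the bulk of the work is the decay bookkeeping needed to justify the cut-off limits. Your remark that one "would need a Runge approximation theorem for the Caffarelli--Silvestre extended operator" is pointing in the right direction, but the concrete mechanism is this lifted adjoint problem, not a Neumann problem on $\Omega$; alternatively (Section~\ref{sec:proof_nonlocal}) one can run the whole argument nonlocally on $\R^n$ via $L^{s-1}$ and unique continuation for $L^{1-s}$, which is perhaps closer in spirit to the Runge property you invoke.
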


The obstructions for reversing the procedure outlined in Theorem \ref{thm:main} thus exactly stem from inverting the bounded linear operator $T$ and the density result in Proposition \ref{prop:density_inversion} (instead of an onto mapping property).
This obstruction can also be factored into two steps: The first consists of the obstruction of recovering
\begin{align*}
(x_{n+1}^{1-2s}\tilde{u}|_{\partial \Omega \times \R_+}, x_{n+1}^{1-2s} \partial_{\nu} \tilde{u}|_{\partial \Omega \times \R_+})
\end{align*}
from the averaged data
\begin{align*}
(v|_{\partial \Omega }, \partial_{\nu}  v|_{\partial \Omega} ).
\end{align*}
The second obstruction is given by continuing the data $(x_{n+1}^{1-2s}\tilde{u}|_{\partial \Omega \times \R_+}, x_{n+1}^{1-2s} \partial_{\nu} \tilde{u}|_{\partial \Omega \times \R_+})$ to the data $(f,\Lambda_s(f))$. 
The crucial difficulty here consists in the first obstruction. Indeed, while the second continuation step is certainly highly (i.e., logarithmically) unstable, it consists of a unique continuation argument (in an unbounded domain), which is comparably well-understood.

It is an interesting question to study the inversion properties of the operator $T$ and the associated obstructions to inverting the outlined reduction argument in more detail.  We plan to explore this further in future work.

\subsection{Outline of the proof}
In concluding the introduction, let us give the formal, non-rigorous argument for the relation between $\tilde{u}$ and $v$:
Integrating the equation $\nabla \cdot x_{n+1}^{1-2s} a \nabla \tilde{u} = 0$ with respect to the $x_{n+1}$ variable yields
\begin{align*}
0= \nabla' \cdot a \nabla' v + \int\limits_{0}^{\infty} \p_{n+1}x_{n+1}^{1-2s} \p_{n+1} \tilde{u}(x', x_{n+1}) dx_{n+1} = \nabla' \cdot a \nabla' v,
\end{align*}
for $x' \in \Omega$, where we have used that $\int\limits_{0}^{\infty} \p_{n+1}x_{n+1}^{1-2s} \p_{n+1} \tilde{u}(x', x_{n+1}) dx_{n+1} =0$ holds by the boundary condition on $\Omega \times \{0\}$ and the decay of $\tilde{u}$ (and its derivatives) at infinity.

Moreover, formally, the associated Dirichlet-to-Neumann map is given by
\begin{align*}
v(x'):=\int\limits_{0}^{\infty}  x_{n+1}^{1-2s} \tilde{u}(x',x_{n+1}) dx_{n+1} \mapsto \p_{\nu} v(x') :=\p_{\nu} \int\limits_{0}^{\infty} x_{n+1}^{1-2s} \tilde{u}(x',x_{n+1}) dx_{n+1} \mbox{ for } x' \in \partial \Omega.
\end{align*}

A key step in our argument for Theorem \ref{thm:main} will be the rigorous derivation of these equations together with the proof of the fact that the knowledge of the (partial) nonlocal Dirichlet-to-Neumann map \eqref{eq:DTN_nonlocal} yields the (up to closure) full Dirichlet-to-Neumann map for the nonlocal problem. In this context we prove that the set of functions $v(x') \in H^{\frac{1}{2}}(\partial \Omega)$ which are obtained by this procedure form a dense subset of $H^{\frac{1}{2}}(\partial \Omega)$. Hence, the full Dirichlet-to-Neumann map for the local problem is determined from the nonlocal one. In particular, if there is uniqueness on the level of the local problem for the matrix $a$, then one also has uniqueness of the coefficients $a$ in the nonlocal problem. A key observation which will be relevant for our arguments consists of a duality principle which we will discuss in the next section.

In the remainder of the article, we make the above outlined arguments precise, and provide the density result and the argument for the constructive Tikhonov approximation of the Dirichlet-to-Neumann data.

\subsection{Relation to the literature}
Since the introduction and the derivation of partial data uniqueness results for the fractional Calder\'on problem in the seminal work \cite{GSU20}, there has been an intensive study of nonlocal inverse problems of Calder\'on type. This includes the investigation of low regularity partial data uniqueness \cite{RS20}, stability and instability results \cite{RS20,RS18} and (single measurement) recovery results for potentials \cite{GRSU20, R21,HL19,HL20}, the analysis of Liouville type transforms as well as the study of lower order drift type contributions \cite{C20,C20b,CLR20,CMRU20,BGU21}. Further there is a, by now, broad literature on related nonlocal models and the associated techniques have also proved of relevance in local problems, see for instance \cite{RS19,GFRZ22,RS19a,R19,L20,LO20,CDHS22,LLR20,L21} and the references therein for a non-exhaustive list on the by now large field. Many of these results rely on the strong global unique continuation properties of the fractional Laplacian \cite{FF14, R15}. We refer to the surveys \cite{S17, R18a} as well as to the literature in the above cited articles for further references.

Moreover, more recently, also the recovery of leading order contributions has been investigated in \cite{F21, FGKU21} from source type data. Here a key ingredient consists of the heat kernel representation of the fractional Laplacian and Kannai type transforms. These allow to transfer the problem to a hyperbolic problem which can be addressed by means of the boundary control method.

Due to the very strong results for the fractional Calder\'on problem -- of which many local counterparts are open problems -- the relation between local and nonlocal Calder\'on type problems is a major question. In this context, in \cite{CR21} a relation between the local Robin problem and fractional Calder\'on problems involving lower order potentials had been outlined. In \cite{GU21} a first result on the relation of Calder\'on type problems as in the present article was established building on heat kernel representations of the fractional Laplacian. In this article, we complement this with a (degenerate) elliptic approach on the relation between these two problems. This in particular allows for completely constructive results and also to identify key obstructions on reversing this procedure (formulated in Proposition \ref{prop:density_inversion} above). We plan to study this further in future work.

\subsection{Notation}\label{sec:not}
Before turning to the main body of the text, we introduce some notation which will be frequently used in the remainder of the article:
\begin{itemize}
\item Whenever an inequality (respectively, an identity) holds up to a positive constant whose exact value is not relevant for our arguments, we will use the symbols $\lesssim$ and $ \gtrsim$ (respectively $\approx$).
\item We set $\R^{n+1}_+:=\{(x',x_{n+1})\in \R^{n+1}: \ x_{n+1}>0\}$ and use the notation $\overline{\R^{n+1}_+}:= \{(x',x_{n+1})\in \R^{n+1}: \ x_{n+1}\geq 0\}$.
\item In order to denote the symmetric $n\times n$ matrices we write $\R^{n\times n}_{sym}$. In case that we seek to denote positive definiteness, we may also use the index `$+$'.
\item We also define the local operator $L:= \nabla'\cdot a\nabla'$. Here $\nabla'$ indicates the gradient in $\R^n$ (i.e., in the \emph{tangential directions}), as opposed to the symbol $\nabla$, which for us indicates differentiation in $\R^{n+1}_+$.
\item  Next, we define a number of function spaces on which our arguments will rely. Following \cite[Section 3]{McLean}, for $s\in \R$ we define the inhomogeneous fractional Sobolev space
$$ H^s(\R^n):= \{ u\in \mathcal S'(\R^n) : \|u\|_{H^s(\R^n)} := \| (1+|\xi|^2)^{s/2}\hat u(\xi)\|_{L^2(\R^n)} <\infty \}, $$ and the homogeneous fractional Sobolev space $$  \dot H^s(\R^n):= \{ u\in \mathcal S'(\R^n) : \|u\|_{\dot H^s(\R^n)} := \| |\xi|^s\hat u(\xi)\|_{L^2(\R^n)} <\infty \}. $$ If $\Omega\subset\R^n$ is open, bounded and has Lipschitz boundary, we also let $$ H^s(\Omega):=\{u|_\Omega, u\in H^s(\R^n)\}, $$ equipped with the quotient norm $\|u\|_{H^s(\Omega)} := \inf\{ \|U\|_{H^s(\R^n)} : U|_\Omega=u \}. $ Moreover, we define
$$\widetilde H^s(\Omega):= \overline{C^\infty_c(\Omega)}^{H^s(\R^n)}, \qquad H^s_{\overline\Omega} := \{ u\in H^s(\R^n) : \supp( u)\subseteq \overline\Omega\},$$ and observe that $\widetilde H^s(\Omega) = H^s_{\overline\Omega}$ holds under our regularity assumptions (which will always presuppose that $\Omega$ is Lipschitz). Further, we have $$ (\widetilde H^s(\Omega))^* = H^{-s}(\Omega), \qquad\mbox{and}\qquad (H^s(\Omega))^* = \widetilde H^{-s}(\Omega).$$ The corresponding fractional Sobolev spaces of the homogeneous kind on a bounded set can be defined in an analogous way. We will also make use of the following weighted (inhomogeneous and homogeneous) Sobolev spaces, which we define for $s\in (0,1)$:
\begin{align*}
 H^1(\R^{n+1}_+,x_{n+1}^{1-2s}) &:= \{ u:\R^{n+1}_+\rightarrow \R  : \| x_{n+1}^{\frac{1-2s}{2}}u \|_{L^2(\R^n)} + \| x_{n+1}^{\frac{1-2s}{2}}\nabla u \|_{L^2(\R^n)} <\infty  \}, \\
 \dot H^1(\R^{n+1}_+,x_{n+1}^{1-2s}) &:= \{ u:\R^{n+1}_+\rightarrow \R : \| x_{n+1}^{\frac{1-2s}{2}}\nabla u \|_{L^2(\R^n)} <\infty  \}. 
 \end{align*}
\end{itemize}

\subsection{Organization of the article}
The remainder of the article is organized as follows: In Section \ref{sec:duality} we recall a duality principle which underlies our reduction of the fractional Calder\'on problem to its local 
counterpart. Here our key result is the converse of the usually used form of the duality which is encoded in Theorems \ref{prop:equation_1} and \ref{prop:equation_2}. In this context, we presuppose the well-definedness of the function $v$ from \eqref{eq:v}. This requires a discussion of regularity and integrability properties of the Caffarelli-Silvestre extension. Due to its technical character, this is postponed to Section \ref{sec:reg} below.
Building on the results of Theorems \ref{prop:equation_1}, \ref{prop:equation_2}, in Section \ref{sec:density-2} we show that the local Dirichlet-to-Neumann map is determined by its nonlocal version. Here, due to the different dimensionalities of the Caffarelli-Silvestre extension (acting on $\R^{n+1}_+$) certain regularity and integrability questions are of relevance. Given the density result, in Sections \ref{sec:cons} and \ref{sec:tikh} we present the proofs of Theorem \ref{thm:consequences} and of Proposition \ref{prop:Tikhonov}. Finally, in Section \ref{sec:reg} we discuss the well-definedness and regularity of the function $v$ which is necessary for the above discussion.

\section{A duality principle}
\label{sec:duality}

In order to deduce the relation between the functions $v$ and $\tilde{u}$, we recall a general duality principle which was first formulated in \cite[Section 2.3]{CS07} (see also \cite[Proposition 3.6]{CS14}) for the constant coefficient setting. We state this for classical solutions to the variable coefficient problem. The central conclusion for weak solutions will be obtained in combination with the regularity results from Section \ref{sec:reg} (see Theorems  \ref{prop:equation_1} and \ref{prop:equation_2} below).

\begin{prop}[Duality]\label{prop:duality}
Let $s\in (0,1)$, $n\in \N$, let {$\tilde{a} \in C^2(\overline{\R^{n+1}_+},\R^{(n+1)\times (n+1)})$} be of the form $\tilde{a}(x')= \begin{pmatrix} a(x') & 0 \\ 0 & 1 \end{pmatrix}$ where $a\in {C^2(\R^n,\R^{n\times n}_{sym})}$ is uniformly elliptic. Let $h \in C^0(\R^n)$. Assume that $u_1 \in C^2(\R^{n+1}_+)$ with $x_{n+1}^{2s-1} \p_{n+1} u_1 \in C^0(\overline{\R^{n+1}_+})$ is a classical solution to 
\begin{equation}\begin{split}\label{eq:CS-Neumann}
\nabla \cdot x_{n+1}^{2s-1} \tilde  a \nabla u_1 &= 0 \mbox{ in } \R^{n+1}_+,\\
\lim\limits_{x_{n+1}\rightarrow 0} x_{n+1}^{2s-1} \p_{n+1} u_1 & = h \mbox{ on } \R^n \times \{0\}.
\end{split}
\end{equation}
Then the function $u_2(x',x_{n+1}):= x_{n+1}^{2s-1} \p_{n+1}u_1(x',x_{n+1})$ is a classical solution to 
\begin{equation}\begin{split}\label{eq:CS-Dirichlet}
\nabla \cdot  x_{n+1}^{1-2s} \tilde a \nabla u_2 &= 0 \mbox{ in } \R^{n+1}_+,\\
u_2 & = h \mbox{ on } \R^n \times \{0\}.
\end{split}\end{equation}
\end{prop}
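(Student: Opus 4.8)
The plan is to verify the duality directly by computation, exploiting the block-diagonal structure of $\tilde a$ and the fact that $\tilde a$ is independent of $x_{n+1}$. First I would write the equation for $u_1$ in expanded form: since $\tilde a(x') = \mathrm{diag}(a(x'),1)$, the equation $\nabla \cdot x_{n+1}^{2s-1}\tilde a \nabla u_1 = 0$ becomes
\begin{align*}
x_{n+1}^{2s-1} L u_1 + \p_{n+1}\bigl(x_{n+1}^{2s-1}\p_{n+1} u_1\bigr) = 0 \mbox{ in } \R^{n+1}_+,
\end{align*}
where $L = \nabla' \cdot a \nabla'$ as in the notation section. I would then set $u_2 := x_{n+1}^{2s-1}\p_{n+1} u_1$ and compute the two pieces of $\nabla \cdot x_{n+1}^{1-2s}\tilde a \nabla u_2$ separately.

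For the tangential part, since $a$ and the weight $x_{n+1}^{2s-1}$ do not interact (the weight depends only on $x_{n+1}$, the coefficient $a$ only on $x'$), one has $L u_2 = L\bigl(x_{n+1}^{2s-1}\p_{n+1}u_1\bigr) = x_{n+1}^{2s-1}\p_{n+1}(L u_1)$, using that tangential and normal derivatives commute for the $C^2$ solution $u_1$. For the normal part, I would compute $\p_{n+1}\bigl(x_{n+1}^{1-2s}\p_{n+1}u_2\bigr) = \p_{n+1}\bigl(x_{n+1}^{1-2s}\p_{n+1}(x_{n+1}^{2s-1}\p_{n+1}u_1)\bigr)$; expanding the inner derivative via the product rule and simplifying the powers of $x_{n+1}$ should yield $\p_{n+1}\bigl(x_{n+1}^{2s-1}\p_{n+1}(\p_{n+1}u_1)\bigr)$ plus correction terms, which after collecting reduce to $x_{n+1}^{2s-1}\p_{n+1}\bigl(x_{n+1}^{1-2s}\p_{n+1}(x_{n+1}^{2s-1}\p_{n+1}u_1)\bigr)$... the cleanest route is to observe that the operator $x_{n+1}^{1-2s}\p_{n+1}\circ x_{n+1}^{2s-1}\p_{n+1}\circ x_{n+1}^{2s-1}$ applied appropriately equals $x_{n+1}^{2s-1}$ times $\p_{n+1}\circ x_{n+1}^{2s-1}\p_{n+1}$. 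Concretely, I would show the algebraic identity that differentiating the $u_1$-equation once in $x_{n+1}$ and multiplying by $x_{n+1}^{2s-1}$ reproduces exactly $\nabla \cdot x_{n+1}^{1-2s}\tilde a \nabla u_2 = 0$. That is, apply $\p_{n+1}$ to $x_{n+1}^{2s-1}L u_1 + \p_{n+1}(x_{n+1}^{2s-1}\p_{n+1}u_1) = 0$, then multiply through by $x_{n+1}^{2s-1}$, and rewrite each term in terms of $u_2$.

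The boundary condition is immediate: $u_2(x',x_{n+1}) = x_{n+1}^{2s-1}\p_{n+1}u_1(x',x_{n+1})$ and by hypothesis $x_{n+1}^{2s-1}\p_{n+1}u_1 \in C^0(\overline{\R^{n+1}_+})$ with limiting value $h$ on $\R^n\times\{0\}$, so $u_2 = h$ on $\R^n \times \{0\}$ in the classical sense, and $u_2 \in C^0(\overline{\R^{n+1}_+})$; the interior regularity $u_2 \in C^2(\R^{n+1}_+)$ follows from $u_1 \in C^2(\R^{n+1}_+)$ together with the equation (bootstrapping, or simply noting that the equation expresses $\p_{n+1}^2 u_1$ in terms of lower order quantities so $u_1$ is actually smoother in $x_{n+1}$; in any case the needed third $x_{n+1}$-derivative of $u_1$ exists away from $\{x_{n+1}=0\}$ by elliptic regularity for the nondegenerate equation there).

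The main obstacle I anticipate is purely bookkeeping: carefully tracking the powers $x_{n+1}^{2s-1}$ versus $x_{n+1}^{1-2s}$ through the product rule so that the correction terms cancel exactly, and making sure the commutation $\p_{n+1}L u_1 = L \p_{n+1}u_1$ and the existence of $\p_{n+1}^3 u_1$ in the open half-space are justified from $u_1 \in C^2(\R^{n+1}_+)$ (here one uses that the equation, solved for $\p_{n+1}^2 u_1$ away from the boundary, shows $u_1$ is $C^\infty$ in $\R^{n+1}_+$ by standard elliptic regularity, since the operator is uniformly elliptic with $C^2$ coefficients on any compact subset of the open half-space). There is no genuine analytic difficulty — the statement is about classical solutions in the open half-space with continuity up to the boundary of the conormal derivative — so the proof is a direct verification once the algebra is organized.
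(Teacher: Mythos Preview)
Your approach is correct and is essentially the same as the paper's: a direct verification exploiting the block structure of $\tilde a$ and the commutation $\p_{n+1}L = L\p_{n+1}$. The paper organizes the algebra slightly more economically by observing in one line that $\nabla\cdot x_{n+1}^{1-2s}\tilde a\nabla u_2 = \p_{n+1}\bigl(x_{n+1}^{1-2s}\,\nabla\cdot x_{n+1}^{2s-1}\tilde a\nabla u_1\bigr) = 0$ (i.e.\ multiply the $u_1$-equation by $x_{n+1}^{1-2s}$ \emph{before} applying $\p_{n+1}$, rather than after), which avoids the correction terms you anticipate having to chase.
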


For completeness, we include the short proof for this result.

\begin{proof}
We observe that for $x_{n+1}>0$ we have
\begin{align*}
\nabla \cdot x_{n+1}^{1-2s} \nabla u_2
&= \nabla \cdot x_{n+1}^{1-2s} \nabla x_{n+1}^{2s-1} \p_{n+1} u_1\\
&= \p_{n+1} \nabla' \cdot a\nabla' u_1 + \p_{n+1} x_{n+1}^{1-2s}(\p_{n+1} x_{n+1}^{{2s-1}} \p_{n+1} u_1)\\
&= \p_{n+1} x_{n+1}^{1-2s}(\nabla \cdot x_{n+1}^{{2s-1}} \tilde a \nabla  u_1) = 0.
\end{align*}
The identity at the boundary follows by definition and the regularity assumptions.
\end{proof}

In what follows we will be mainly interested in (suitable weak formulations of) the converse of the relation outlined in Proposition \ref{prop:duality}. 
Motivated by the discussion in the previous proposition, we therefore consider the function
\begin{align}
\label{eq:w}
w(x', x_{n+1}) := \int\limits_{x_{n+1}}^{\infty} t^{1-2s} \tilde{u}(x',t) dt, \qquad x_{n+1}>0,
\end{align}
where $\tilde{u} \in \dot{H}^{1}(\R^{n+1}_+, x_{n+1}^{1-2s})$ is a solution to the equation \eqref{eq:CS-Dirichlet} with $h \in \tilde{H}^s(W)$ and $W\subset \R^n$ an open, bounded Lipschitz domain. Using kernel estimates, it can be shown that the integral defining $w$ is finite for each $x_{n+1}>0$ and that for every $x_{n+1}>0$ the function $w$ is as regular as the coefficient $a$ permits (e.g., $C^{\infty}$ if $a$ is smooth). Moreover, the more detailed regularity estimates in Section \ref{sec:reg} even allow to conclude that the limit {$v(x'):=w(x',0)$} is well-defined and that ${v(x') }\in H^1(\Omega)$. We refer to Proposition \ref{prop:reg} in Section \ref{sec:reg} for an analysis of this function.

Now, the relevance of the function $w$ stems from the fact that for classical solutions $\tilde{u}$ with sufficient tangential and normal decay, it indeed reverses the relation formulated in Proposition \ref{prop:duality}. More precisely, for classical solutions one directly obtains from formal calculations that
\begin{align}
\label{eq:representation_s}
(-\nabla' \cdot a \nabla')^{1-s} v = -\tilde u(x',0) = -u(x') \mbox{ for } x' \in \R^n.
\end{align}
Here $(-\nabla' \cdot a \nabla')^{1-s} v$ is defined in the sense of Caffarelli-Silvestre extensions, that is
$$(-\nabla' \cdot a \nabla')^{1-s} v := c_{1-s}\lim\limits_{x_{n+1}\rightarrow 0} x_{n+1}^{2s-1} \p_{n+1}{\tilde{g}},$$
where $\tilde{g}$ is the classical, decaying solution of the Caffarelli-Silvestre extension problem 
\begin{align}\begin{split}
\label{eq:g}
\nabla \cdot  x_{n+1}^{2s-1} \tilde a \nabla {\tilde{g}} &= 0 \mbox{ in } \R^{n+1}_+,\\
{\tilde{g}} & = v \mbox{ on } \R^n.
\end{split}\end{align}
We note that by a short formal calculation, the function $w$ from \eqref{eq:w} (see the argument below) satisfies this equation.
Using the semi-group property of the fractional Laplacian, we (formally) immediately obtain that
\begin{align*}
(-\nabla' \cdot a \nabla') v = (-\nabla' \cdot a \nabla')^s u \mbox{ in } \R^n,
\end{align*}
and thus, in particular, $(-\nabla' \cdot a \nabla') v = 0$ in $\Omega$. 

Indeed, let us first give a non-rigorous argument for these relations: To this end, let us first show that $w$ formally satisfies \eqref{eq:g} (the rigorous derivation will be carried out in Theorem \ref{prop:equation_1} below). Here it suffices to prove the bulk equation, since the boundary equation is formally valid by construction. For the bulk equation, we observe that
\begin{align*}
\nabla \cdot x_{n+1}^{2s-1} \nabla w
&= \int\limits_{x_{n+1}}^{\infty} t^{1-2s} x_{n+1}^{2s-1} \Delta' \tilde{u}(x',t) dt - \p_{x_{n+1}} \tilde{u}(x', x_{n+1})\\
& = -  \int\limits_{x_{n+1}}^{\infty}  x_{n+1}^{2s-1} \p_{t} t^{1-2s} \p_t \tilde{u}(x',t) dt - \p_{x_{n+1}} \tilde{u}(x', x_{n+1})\\
& =  \p_{x_{n+1}} \tilde{u}(x', x_{n+1})- \p_{x_{n+1}} \tilde{u}(x', x_{n+1}) =0.
\end{align*}
Finally, formally, the identity \eqref{eq:representation_s} follows by definition of $w$:
\begin{align*}
\lim\limits_{x_{n+1} \rightarrow 0} x_{n+1}^{2s-1} \p_{x_{n+1}} w(x', x_{n+1})
= \lim\limits_{x_{n+1} \rightarrow 0} \tilde{u}(x',x_{n+1}) = u(x').
\end{align*}

With this discussion in hand, a key ingredient in our argument will thus rely on making these formal computations rigorous, which is the content of the next results and of the regularity estimates in Section \ref{sec:reg}.

\begin{thm}
\label{prop:equation_1}
{Let $s\in (0,1)$, $n \in \N$}.
{Let $\tilde{a} \in C^2(\overline{\R^{n+1}_+},\R^{(n+1)\times (n+1)})$} be of the form $\tilde{a}(x')= \begin{pmatrix} a(x') & 0 \\ 0 & 1 \end{pmatrix}$ where $a\in {C^2(\R^n,\R^{n\times n}_{sym})}$ is uniformly elliptic.
Let $\tilde u \in H^{1}(\R^{n+1}_+, x_{n+1}^{1-2s})$ be the Caffarelli-Silvestre extension of $u\in H^s(\R^n)$ with $\supp(u) \subset \R^n$ compact. Consider $v(x'):= \int_0^\infty t^{1-2s}\tilde u(x',t)dt$. 
Assume that $v \in H^1(\Omega')$ for some $\Omega' \subset \R^n$ open, bounded, Lipschitz.
Then, $v$ is a weak solution to 
\begin{align*}
\nabla' \cdot a \nabla' v = (-\nabla' \cdot a \nabla')^s u \mbox{ in } \Omega',
\end{align*}
i.e., the following identity holds:
\begin{align*}
\int\limits_{\Omega'} a(x') \nabla' v(x') \cdot \nabla' \varphi(x') dx' = \int\limits_{\R^n} \varphi(x') \lim\limits_{t\rightarrow 0} t^{1-2s} \p_t \tilde{u}(x',t) dx', \ \mbox{ for all } \varphi \in H^1_0(\Omega').
\end{align*}
\end{thm}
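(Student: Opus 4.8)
\emph{Strategy.} Throughout write $t=x_{n+1}\ge 0$ for the Caffarelli--Silvestre extension variable. The plan is to obtain the asserted weak identity by testing the weak formulation of the extension solved by $\tilde u$ against a one-parameter family of \emph{tensor-product} functions $\psi_R(x',t):=\varphi(x')\,\eta_R(t)$ which, as $R\to\infty$, approximate $\varphi$ regarded as constant in $t$ (itself not an admissible test function). Here $\varphi\in H^1_0(\Omega')$ is extended by zero to $\R^n$, and $\eta_R\in C^\infty_c([0,\infty))$ is a cut-off with $\eta_R(0)=1$, $\eta_R\equiv 1$ on $[0,R]$, $\supp\eta_R\subset[0,2R]$ and $|\eta_R'|\lesssim R^{-1}$. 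Since $\varphi$ is compactly supported and $1-2s>-1$, one checks at once that $\psi_R\in\dot H^1(\R^{n+1}_+,t^{1-2s})$ with trace $\psi_R(\cdot,0)=\varphi\in\dot H^s(\R^n)$, so $\psi_R$ is admissible in the weak formulation of the extension,
\begin{align*}
\int_{\R^{n+1}_+} t^{1-2s}\,\tilde a(x')\nabla\tilde u\cdot\nabla\psi\,dx = \int_{\R^n}\psi(x',0)\,\lim_{t\to 0}t^{1-2s}\partial_t\tilde u(x',t)\,dx',
\end{align*}
the right-hand side being the $\dot H^{-s}$--$\dot H^s$ pairing with the co-normal derivative, i.e.\ with $(-\nabla'\cdot a\nabla')^s u$ up to the structural constant.

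\emph{Passing to the limit.} Inserting $\psi=\psi_R$ and using the block form \eqref{eq:matrix} of $\tilde a$, the left-hand side splits as $I_R+II_R$ with
\begin{align*}
I_R&:=\int_0^\infty \eta_R(t)\Big(\int_{\Omega'} t^{1-2s}\,a(x')\nabla'\tilde u(x',t)\cdot\nabla'\varphi(x')\,dx'\Big)dt,\\
II_R&:=\int_0^\infty \eta_R'(t)\Big(\int_{\Omega'} t^{1-2s}\,\partial_t\tilde u(x',t)\,\varphi(x')\,dx'\Big)dt,
\end{align*}
while the right-hand side equals $\int_{\R^n}\varphi(x')\lim_{t\to 0}t^{1-2s}\partial_t\tilde u(x',t)\,dx'$, independently of $R$. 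For $I_R$ the integrability and regularity estimates of Section \ref{sec:reg} (Proposition \ref{prop:reg}) provide $t^{1-2s}\nabla'\tilde u\in L^1\big((0,\infty);L^2(\Omega')\big)$ and, together with the hypothesis $v\in H^1(\Omega')$, the identity $\nabla'v(x')=\int_0^\infty t^{1-2s}\nabla'\tilde u(x',t)\,dt$ in $L^2(\Omega')$; hence Fubini and dominated convergence ($0\le\eta_R\le1$, $\eta_R\to 1$ pointwise) give $I_R\to\int_{\Omega'}a\nabla'v\cdot\nabla'\varphi\,dx'$. For $II_R$, since $\eta_R'$ is supported in $[R,2R]$ with $|\eta_R'|\lesssim R^{-1}$,
\begin{align*}
|II_R|\lesssim \frac1R\int_R^{2R}\int_{\supp\varphi} t^{1-2s}\,|\partial_t\tilde u(x',t)|\,|\varphi(x')|\,dx'\,dt,
\end{align*}
and the decay at infinity of $\tilde u$ and $\partial_t\tilde u$ (again Section \ref{sec:reg}, crucially using that $\supp u$ is compact) forces $\int_{\supp\varphi}t^{1-2s}|\partial_t\tilde u(x',t)|\,dx'\to 0$ as $t\to\infty$, so that $II_R\to 0$. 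Letting $R\to\infty$ therefore yields $\int_{\Omega'}a\nabla'v\cdot\nabla'\varphi\,dx'=\int_{\R^n}\varphi\lim_{t\to 0}t^{1-2s}\partial_t\tilde u\,dx'$ for every $\varphi\in H^1_0(\Omega')$, which is the claim.

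\emph{Main obstacle.} The non-triviality is entirely concentrated in the two inputs imported from Section \ref{sec:reg}: that $t^{1-2s}\nabla'\tilde u\in L^1\big((0,\infty);L^2(\Omega')\big)$ (which legitimises both $v\in H^1(\Omega')$ with the expected weak gradient and the use of Fubini in $I_R$), and that $t^{1-2s}\partial_t\tilde u$ has sufficient spatial $L^1$-decay as $t\to\infty$ to annihilate $II_R$. Both rest on sharp kernel/fundamental-solution estimates for the degenerate elliptic operator $\nabla\cdot t^{1-2s}\tilde a\nabla$ and on the compactness of $\supp u$; with these at hand the above is a routine Fubini-and-cut-off passage. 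One may alternatively avoid the tensor test function and argue by slicing: for a.e.\ $t>0$ the bulk equation reads, weakly in $x'$, $-t^{1-2s}\nabla'\cdot a\nabla'\tilde u(\cdot,t)=\partial_t\big(t^{1-2s}\partial_t\tilde u(\cdot,t)\big)$; integrating against $\varphi$, applying Fubini in $t$, and invoking the fundamental theorem of calculus in $t$ with the boundary contributions at $t=0$ (the co-normal derivative) and at $t=\infty$ (vanishing, by decay) supplied by Section \ref{sec:reg}. The tensor-test-function route is preferable only in that the $t=0$ boundary behaviour is encoded directly in the weak formulation of the extension.
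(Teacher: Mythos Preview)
Your approach is essentially the same as the paper's: both test the weak formulation of the Caffarelli--Silvestre extension against tensor-product functions $\varphi(x')\eta_R(t)$ with a vertical cut-off, split into tangential and normal parts, and use the kernel decay estimates of Section~\ref{sec:reg} (Lemma~\ref{CS-estimates-vertical}) to pass to the limit $R\to\infty$. The paper organises the computation starting from $\int_{\Omega'}a\nabla'v\cdot\nabla'\varphi$ and working towards the weak form, while you start from the weak form and converge back; this is purely a cosmetic difference. One minor imprecision: the claim that $t^{1-2s}\nabla'\tilde u\in L^1\bigl((0,\infty);L^2(\Omega')\bigr)$ is stronger than what Section~\ref{sec:reg} literally provides, but what you actually need for dominated convergence---namely $\int_0^\infty t^{1-2s}|\nabla'\tilde u(\cdot,t)|\,dt\in L^2(\Omega')$---does follow from the kernel bounds there (this is exactly the content of the gradient estimate in Step~3 of Proposition~\ref{prop:reg}), and the paper handles the same point by a direct tail estimate on $\|\nabla'\int_k^{\infty}t^{1-2s}\tilde u\,dt\|_{L^2(\Omega)}$.
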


\begin{rmk}
We note that the expression 
\begin{align*}
\int\limits_{\R^n} \varphi(x') \lim\limits_{t\rightarrow 0} t^{1-2s} \p_t \tilde{u}(x',t) dx'
\end{align*}
is understood as an $H^{-s}-H^s$ duality pairing, which is well-defined since $\lim\limits_{t\rightarrow 0} t^{1-2s} \p_t \tilde{u}(x',t) \in \dot{H}^{-s}(\R^n)$ for solutions to the Caffarelli-Silvestre extension of $H^s(\R^n)$ functions with compact support.
\end{rmk}

\begin{proof}
We first note that, by density, it suffices to consider $\varphi \in C_c^{\infty}(\Omega')$.
Next, by construction of $v$, we obtain
\begin{align}
\label{eq:local_eq}
\begin{split}
\int\limits_{\Omega'} a \nabla' v \cdot \nabla' \varphi dx'
& = \int\limits_{\R^n} a\nabla' v \cdot \nabla' \varphi dx'\\
& = \int\limits_{\R^n} a\nabla' \int\limits_{0}^{\infty} t^{1-2s} \tilde{u}(x',t) dt \cdot \nabla' \varphi(x') dx'\\
& = \lim\limits_{k\rightarrow \infty} \int\limits_{\R^n} a \nabla' \int\limits_{0}^{\infty} t^{1-2s}\tilde{u}(x',t) \eta_k(t) dt \cdot \nabla' \varphi(x') dx'.
\end{split}
\end{align}
Here $\eta_k(t) = \eta_1(t/k)$, where $\eta_1 : [0,\infty) \rightarrow \R$ is a smooth function with $\eta_1(t) = 1$ for $t\in [0,1)$ and $\eta_1(t) = 0$ for $t\geq 2$. The convergence in \eqref{eq:local_eq} follows from the fact that 
\begin{align*}
\nabla' \int\limits_{0}^{\infty} t^{1-2s} \tilde{u}(x',t) \eta_k(t) dt \rightarrow \nabla' \int\limits_{0}^{\infty} t^{1-2s} \tilde{u}(x',t)  dt \mbox{ in } L^2(\Omega),
\end{align*}
since, by virtue of equation \eqref{eq:extension_est} in Lemma \ref{CS-estimates-vertical} we have that
\begin{align*}
\left\| \nabla' \int\limits_{0}^{\infty} t^{1-2s} \tilde{u}(x',t)(1- \eta_k)(t) dt \right\|_{L^2(\Omega)}
&\leq \left\| \int\limits_{k}^{2k} t^{1-2s}  |\nabla'\tilde{u}(x',t)| dt \right\|_{L^2(\Omega)}\\
& \lesssim k^{-n-2s} \|\tilde{u}(\cdot, 0)\|_{L^1(\Omega)}  \\ & \lesssim k^{-n-2s} \|\tilde{u}(\cdot, 0)\|_{L^2(\Omega)}.
\end{align*}

By the regularity of $\tilde{u}$, we compute that  
\begin{align*}
\int\limits_{\R^n} a\nabla' \int\limits_{0}^{\infty} t^{1-2s}\tilde{u}(x',t) \eta_k(t) dt \cdot \nabla' \varphi(x') dx' = \int\limits_{0}^{\infty} \int\limits_{\R^n} a t^{1-2s} \nabla'  \tilde{u}(x',t)  \cdot \nabla' (\varphi(x')\eta_k(t)) dx' dt.
\end{align*}
This equality can be obtained by expressing the tangential derivative as a limit of difference quotients, and then using dominated convergence and Fubini's theorem. 

Invoking the equation for $\tilde{u}$ then implies that
\begin{align*}
&\lim\limits_{k\rightarrow \infty} \int\limits_{0}^{\infty} \int\limits_{\R^n} t^{1-2s} a \nabla'  \tilde{u}(x',t)  \cdot \nabla' (\varphi(x')\eta_k(t)) dx' dt\\
& = - \lim\limits_{k\rightarrow \infty} \int\limits_{0}^{\infty} \int\limits_{\R^n} t^{1-2s}  \p_t  \tilde{u}(x',t)  \p_t (\varphi(x')\eta_k(t)) dx' dt
+ \int\limits_{\R^n} \lim\limits_{t\rightarrow 0} t^{1-2s} \p_t  \tilde{u}(x',t) \varphi(x') dx'.
\end{align*}
It remains to argue that 
\begin{align*}
\lim\limits_{k\rightarrow \infty} \int\limits_{0}^{\infty} \int\limits_{\R^n} t^{1-2s}  \p_t  \tilde{u}(x',t)  \p_t (\varphi(x')\eta_k(t)) dx' dt=0.
\end{align*}
Indeed, we have
\begin{align*}
 \lim\limits_{k\rightarrow \infty} \int\limits_{0}^{\infty} \int\limits_{\R^n} t^{1-2s}  \p_t  \tilde{u}(x',t)  \varphi(x') \p_t \eta_k(t) dx' dt=0,
\end{align*}
which is a consequence of the fact that $|\p_t  \tilde{u}(x',t)| \lesssim k^{-n-1}$ and $| \p_t \eta_k(t)|\leq C k^{-1}$ for $t\in (k,2k)$ (see Lemma \ref{CS-estimates-vertical} in Section \ref{sec:reg}) and the compact support condition for $\varphi$ and $\eta_1$. 
\end{proof}

Combining the above result with the regularity for $v$ which is deduced in Proposition \ref{prop:reg} in Section \ref{sec:reg} we conclude that the function $v$ satisfies the following equation:

\begin{thm}
\label{prop:equation_2}
{Let $s\in (0,1)$, $n \in \N$}.
{Let $\tilde{a} \in C^2(\overline{\R^{n+1}_+},\R^{(n+1)\times (n+1)})$} be of the form $\tilde{a}(x')= \begin{pmatrix} a(x') & 0 \\ 0 & 1 \end{pmatrix}$ where $a\in {C^2(\R^n,\R^{n\times n}_{sym})}$ is uniformly elliptic. Assume that $\Omega, W\subset \R^n$ are non-empty, bounded and open Lipschitz sets such that $\overline\Omega\cap\overline W=\emptyset$. Let $u_f\in H^s(\R^n)$ be the unique solution of \eqref{eq:fracCald} with exterior value $f\in \tilde H^s(W)$, and let $\tilde u \in H^{1}(\R^{n+1}_+, x_{n+1}^{1-2s})$ be the Caffarelli-Silvestre extension of $u_f$. Consider $v(x'):= \int_0^\infty t^{1-2s}\tilde u(x',t)dt$. 
Then, $v \in H^1(\Omega)$ and
$v$ is a weak solution to 
\begin{align*}
\nabla' \cdot a \nabla' v = 0 \mbox{ in } \Omega,
\end{align*}
i.e., the following identity holds:
\begin{align*}
\int\limits_{\Omega} a \nabla' v \cdot \nabla' \varphi dx' = 0, \ \mbox{ for all } \varphi \in H^1_0(\Omega).
\end{align*}
\end{thm}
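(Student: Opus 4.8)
The plan is to derive the statement as a corollary of Theorem \ref{prop:equation_1} combined with the regularity assertion of Proposition \ref{prop:reg}, after recording the structural properties of the solution $u_f$ that make the abstract result applicable. First I would note that $u_f$ has compact support: by the exterior condition in \eqref{eq:fracCald} we have $u_f = f$ on $\Omega_e = \R^n \setminus \overline{\Omega}$, and since $f\in\tilde{H}^s(W)$ its support lies in $\overline{W}$; as $\overline{\Omega}\cap\overline{W}=\emptyset$ we have $\overline{W}\subset\Omega_e$, so $u_f$ vanishes on $\Omega_e\setminus\overline{W}$, whence $\supp(u_f)\subset\overline{\Omega}\cup\overline{W}$, which is bounded. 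Existence, uniqueness and the membership $u_f\in H^s(\R^n)$ are the standard Lax--Milgram statements for the bilinear form associated with $(-\nabla\cdot a\nabla)^s$ on $\tilde{H}^s(\Omega)$, using the uniform ellipticity of $a$ and a Poincar\'e inequality.

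With the support property in hand, I would invoke Proposition \ref{prop:reg} (Section \ref{sec:reg}) with $\Omega'=\Omega$ to obtain that $v(x')=\int_0^\infty t^{1-2s}\tilde u(x',t)\,dt$ is well-defined and lies in $H^1(\Omega)$. All hypotheses of Theorem \ref{prop:equation_1} are then met with $\Omega'=\Omega$, so $v$ is a weak solution of $\nabla'\cdot a\nabla' v = (-\nabla'\cdot a\nabla')^s u_f$ in $\Omega$; explicitly,
\begin{align*}
\int\limits_{\Omega} a(x')\nabla' v(x')\cdot\nabla'\varphi(x')\,dx' = \int\limits_{\R^n}\varphi(x')\lim\limits_{t\rightarrow 0} t^{1-2s}\p_t\tilde u(x',t)\,dx', \qquad \mbox{ for all } \varphi\in H^1_0(\Omega),
\end{align*}
the right-hand side being the $H^{-s}$--$H^s$ pairing of $\lim_{t\to 0}t^{1-2s}\p_t\tilde u\in\dot H^{-s}(\R^n)$ — which, up to the constant $c_s$, equals $(-\nabla\cdot a\nabla)^s u_f$ — against $\varphi$.

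To finish, I would use the interior equation in \eqref{eq:fracCald}: since $(-\nabla\cdot a\nabla)^s u_f = 0$ in $\Omega$, the functional $\varphi\mapsto\int_{\R^n}\varphi\,\lim_{t\to 0}t^{1-2s}\p_t\tilde u\,dx'$ vanishes on $C_c^\infty(\Omega)$. It extends continuously to $H^1_0(\Omega)$: for $s<1$ and $\Omega$ bounded one has continuous embeddings $H^1_0(\Omega)\hookrightarrow\tilde H^s(\Omega)\hookrightarrow\dot H^s(\R^n)$ (the Poincar\'e inequality makes the homogeneous and inhomogeneous norms comparable on $\tilde H^s(\Omega)$), and the pairing above is continuous in the $\dot H^s(\R^n)$ topology of $\varphi$. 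Hence it vanishes on all of $H^1_0(\Omega)$, and therefore $\int_\Omega a\nabla' v\cdot\nabla'\varphi\,dx' = 0$ for every $\varphi\in H^1_0(\Omega)$, which is the claim.

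I expect the substantive difficulties to be confined to the two imported ingredients rather than to this assembly: namely (i) proving $v\in H^1(\Omega)$, which is exactly what Proposition \ref{prop:reg} provides and which rests on the weighted kernel and gradient estimates for the Caffarelli--Silvestre extension developed in Section \ref{sec:reg}, and (ii) bookkeeping the function-space pairings (homogeneous versus inhomogeneous, $\tilde H^s$ versus $H^1_0$) so that ``$(-\nabla\cdot a\nabla)^s u_f = 0$ in $\Omega$'' may legitimately be tested against $H^1_0(\Omega)$ functions. The remaining steps — the compact-support observation for $u_f$ and the direct application of Theorem \ref{prop:equation_1} — are routine.
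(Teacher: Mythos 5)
Your proposal is correct and follows exactly the same route as the paper, whose proof is a one-line citation of Theorem \ref{prop:equation_1} and Proposition \ref{prop:reg}; you merely make explicit the compact-support observation for $u_f$ and the passage from $(-\nabla\cdot a\nabla)^s u_f=0$ in $\Omega$ to the vanishing of the right-hand side, both of which are implicit in the paper's argument.
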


\begin{proof}
The result follows immediately from Theorem \ref{prop:equation_1} and by the assumptions on $\tilde u$ and $u_f$, since $v \in H^1(\Omega)$ by Proposition \ref{prop:reg}.
\end{proof}

\section{Density argument for $s\in(0,1)$}\label{sec:density-2}

Given the results on the equation for $v$ in the interior of $\Omega$, we seek to prove that the resulting Cauchy data which are inherited from the nonlocal Calder\'on problem form a dense set in the set of Cauchy data for the local problem. The key observation here will be the following density result:

\begin{prop}
\label{prop:density}
Let $n\geq 3$ and $s\in (0,1)$. Further suppose that $\Omega, W \subset \R^n$ are open, bounded Lipschitz domains with $\overline{\Omega}\cap \overline{W} = \emptyset$.
{Let $\tilde{a} \in C^2(\overline{\R^{n+1}}_{+},\R^{(n+1)\times (n+1)})$} be of the form $\tilde{a}(x')= \begin{pmatrix} a(x') & 0 \\ 0 & 1 \end{pmatrix}$ where $a\in {C^2(\R^n,\R^{n\times n}_{sym})}$ is uniformly elliptic and such that $a\equiv Id$ in $\Omega_e$.
Let $u_f \in  \dot H^{1}(\R^{n+1}_+, x_{n+1}^{1-2s})$ be the unique weak solution of the equation
\begin{align*}
\nabla\cdot x_{n+1}^{1-2s}\tilde a\nabla u_f & = 0 \mbox{ in } \R^{n+1}_+,\\
u_f & = f \mbox{ in } \Omega_e \times \{0\}, \\
\lim\limits_{x_{n+1}\rightarrow 0} x_{n+1}^{1-2s}\p_{n+1} u_f & = 0 \mbox{ in } \Omega,
\end{align*}
with $f\in C_c^{\infty}(W \times \{0\})$. Moreover, define the sets $V\subset H^{1}(\Omega)$ and $V'\subset H^{\frac{1}{2}}(\p\Omega)$ as
\begin{align*}
V:=\left\{ v_f(x'):= \int\limits_{0}^{\infty} t^{1-2s} u_f(x',t) dt, \ f\in {C_c^{\infty}(W)} \right\}, \qquad V':= 
 \{v|_{\p\Omega}, v\in V\}.
\end{align*}
Then,  $\overline {V'} =H^{\frac{1}{2}}(\partial \Omega )$. 
\end{prop}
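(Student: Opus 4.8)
The plan is to establish the density of $V'$ in $H^{\frac{1}{2}}(\partial\Omega)$ by a Hahn--Banach duality argument, reducing it to a unique continuation statement for the nonlocal operator. Suppose $g \in H^{-\frac{1}{2}}(\partial\Omega)$ annihilates $V'$, i.e. $\langle g, v_f|_{\partial\Omega}\rangle = 0$ for all $f \in C_c^\infty(W)$; the goal is to show $g = 0$. To exploit this, I would introduce the solution $\psi$ of an appropriate \emph{adjoint} local problem in $\Omega$ carrying the datum $g$ on $\partial\Omega$ (for instance $\nabla' \cdot a\nabla' \psi = 0$ in $\Omega$ with a suitable boundary condition encoding $g$), so that the pairing $\langle g, v_f|_{\partial\Omega}\rangle$ can be rewritten, via Green's identity together with Theorem~\ref{prop:equation_2} (which says $\nabla'\cdot a\nabla' v_f = 0$ in $\Omega$), as a bulk pairing. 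The key point is that because $v_f$ solves the homogeneous equation in $\Omega$, the boundary pairing of $g$ against $v_f|_{\partial\Omega}$ equals the boundary pairing of (the conormal derivative of) $\psi$ against $v_f$, and by integrating the relation $v_f(x') = \int_0^\infty t^{1-2s}u_f(x',t)\,dt$ one further relates this to a pairing involving $u_f$ on $\Omega \times \{0\}$, hence ultimately to a pairing against $f$ supported in $W$.

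The next step is to convert ``$g$ annihilates $V'$'' into the statement that some function built from $g$ — call it $h$, a function on $\Omega$ or its Caffarelli--Silvestre-type extension — has vanishing exterior data and vanishing nonlocal Neumann data on $W$, so that the \emph{strong uniqueness / Runge-type} properties of the fractional operator $(-\nabla'\cdot a\nabla')^s$ force $h \equiv 0$, and then $g = 0$. Concretely, I expect the argument to run: extend $\psi$ (or rather the function representing $g$) by the Caffarelli--Silvestre extension with exponent $2s-1$ into $\R^{n+1}_+$, use the duality principle of Proposition~\ref{prop:duality} and the integration identity \eqref{eq:representation_s} to see that testing $v_f$ against $g$ is the same as testing $u_f$ (equivalently $f$) against a fixed function $\Phi_g$ defined on all of $\R^n$; since this vanishes for \emph{all} $f \in C_c^\infty(W)$, we get $\Phi_g|_W = 0$. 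Combined with the fact that $\Phi_g$ also solves the relevant fractional equation in $\Omega_e \supset W$ (or has a known structure there thanks to $a \equiv \mathrm{Id}$ outside $\Omega$), the global unique continuation principle for the fractional Laplacian (as used throughout the fractional Calderón literature, e.g. \cite{GSU20, GSU20, FF14, R15}) yields $\Phi_g \equiv 0$, and unwinding the construction gives $g = 0$.

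The main obstacle I anticipate is precisely the correct identification of the ``adjoint'' object $\Phi_g$ and the justification that testing against $V'$ really does encode a clean nonlocal condition on $W$ — this is where the hypothesis $n \geq 3$ and the integrability/regularity of the Caffarelli--Silvestre extension (Lemma~\ref{CS-estimates-vertical}, Proposition~\ref{prop:reg}) enter, because one needs the vertical integral defining $v_f$ to converge, to commute with boundary restriction, and to interact correctly with Green's identity; the decay $|\nabla'\tilde u(x',t)| \lesssim t^{-n-2s}$ and $|\tilde u(x',t)| \lesssim t^{-n-1+2s}$ type estimates are what make the dimension restriction necessary. A secondary subtlety is that $V'$ is obtained by first restricting elements of $V \subset H^1(\Omega)$ to $\partial\Omega$; one must check that the trace map does not destroy density, which is handled by the fact that harmonic-type extensions from $\partial\Omega$ are surjective onto the relevant trace space and that the equation $\nabla'\cdot a\nabla' v_f = 0$ lets us recover $v_f$ in $\Omega$ from its trace.

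Once $g = 0$ is shown, Hahn--Banach gives $\overline{V'} = H^{\frac{1}{2}}(\partial\Omega)$, completing the proof.
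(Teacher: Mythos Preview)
Your high-level strategy — Hahn--Banach, an adjoint construction, and unique continuation for the fractional operator — is exactly the strategy of the paper, and in spirit your outline is closest to the paper's second, ``nonlocal'' proof in Section~\ref{sec:proof_nonlocal}. There are, however, two structural differences worth flagging, and one of them hides a genuine gap in your sketch.

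\textbf{Where the paper organizes things differently.} The paper does \emph{not} dualize at the level of $H^{-\frac{1}{2}}(\partial\Omega)$. Instead it first proves that $V$ is dense in
\[
S:=\{v\in H^{1}(\Omega):\ \nabla'\cdot a\nabla' v=0\ \text{in}\ \Omega\}
\]
by taking $\psi\in \tilde H^{-1}(\Omega)$ with $\psi(v_f)=0$, and only afterwards passes to boundary traces (this is Step~3 in the proof). The adjoint object is then \emph{global}: in the local proof it is a solution $w$ of $\nabla\cdot x_{n+1}^{1-2s}\tilde a\nabla w=x_{n+1}^{1-2s}\psi$ on all of $\R^{n+1}_+$ (Lemma~\ref{well-posed-adjoint}), and in the nonlocal proof it is $w_1\in \dot H^1(\R^n)$ solving $Lw_1=\psi$ in $\R^n$, together with a fractional problem for an auxiliary $u_1$. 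Your adjoint, by contrast, is a local boundary value problem in $\Omega$ only, which you then want to extend. That is not impossible, but the paper's choice of a globally posed adjoint is precisely what makes the link to $W$ clean: one obtains directly a pairing $\int_W f\cdot(\text{something})\,dx'$, hence a pointwise condition on $W$.

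\textbf{The gap.} Your last step, ``$\Phi_g\equiv 0$ and unwinding the construction gives $g=0$'', is where the argument is incomplete. In the paper, after unique continuation yields that the adjoint vanishes in $\Omega_e$ (equivalently in $\Omega_e\times\R_+$), one still has to return to an arbitrary $v\in S$ and show $\psi(v)=0$; this is a separate computation (Step~2 in both proofs) using integration by parts in $\Omega$ together with the fact that the adjoint and its conormal derivative vanish on $\partial\Omega$. In your setup you would need the analogous statement: that the map $g\mapsto \Phi_g$ is injective, or more concretely that vanishing of your extended object forces the original boundary functional $g$ to vanish. This is not automatic, and your sketch does not indicate how to recover $g$ from $\Phi_g$. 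The paper's route through $\tilde H^{-1}(\Omega)$ and $S$ avoids this issue because the adjoint is tied to $\psi$ by a PDE on all of $\R^n$ (or $\R^{n+1}_+$), so its vanishing outside $\Omega$ feeds directly back into the pairing $\psi(v)$ via Green's identity on $\Omega$.

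A minor point: for unique continuation you need \emph{two} pieces of Cauchy data on $W$, not one. In the paper's nonlocal proof this is arranged by observing that $v_1:=L^s u_1$ satisfies both $v_1=0$ and $L^{1-s}v_1=0$ on $W$; your outline mentions only $\Phi_g|_W=0$ and is vague about the companion condition.
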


\subsection{A formal, non-rigorous argument for the density result}
\label{sec:heuristic}
Before turning to the rigorous proof of Proposition \ref{prop:density}, we give an informal outline of its proof only pointing out necessary modifications for a rigorous argument but without discussing the details necessary for the rigorous implementation of the result. A full proof, including the necessary limiting and cut-off arguments is then given in the next subsection. In order to avoid all technical details and to focus on the main idea, in the present heuristic section we present the argument only for the case $a = Id$ and $s= \frac{1}{2}$. The rigorous proof in Section \ref{sec:proof_local} then provides the precise argument for the general case.

\begin{proof}[Sketch of proof of Proposition \ref{prop:density} (for $a=Id, s=\frac{1}{2}$)]
We argue in three steps:\\
\emph{Step 1.} We start by proving the density of $V\subset H^1(\Omega)$ in $S$, where
$$S:=\{v\in H^1(\Omega): \Delta' v = 0 \mbox{ in } \Omega\}.$$
By the Hahn-Banach theorem, it suffices to show that if $\psi \in \tilde{H}^{-1}(\Omega)$ is such that $\psi(v_f)=0$ for all $f\in C_c^{\infty}(W)$, then also $\psi(v)=0$ for all $v\in S$. Here we use the notation $v_f, u_f$ as introduced in the formulation of the proposition.

In order to approach the density result, for $\psi \in \tilde{H}^{-1}(\Omega)$ we introduce an auxiliary problem, which we will refer to as the adjoint problem:
\begin{align}
\label{eq:dual_dense_formal}
\begin{split}
\Delta w & = \psi \mbox{ in } \R^{n+1}_+,\\
w& = 0 \mbox{ in } \Omega_e \times \{0\},\\
\p_{n+1} w & = 0 \mbox{ in } \Omega \times \{0\} .
\end{split}
\end{align}
We will discuss its (weak) solvability in Lemma \ref{well-posed-adjoint} below. Due to the lack of decay of $\psi$ in the normal variable this will rely on an ``explicit'' construction instead of a direct energy argument. 
With such a function $w$ given, we compute 
\begin{equation}\begin{split}\label{eq:density-hb1}
0 & = \psi(v_f) 
=  \langle \psi, \int_{0}^{\infty} u_{f}(\cdot, t) dt \rangle_{\tilde{H}^{-1}(\Omega),H^1(\Omega)}  
\\ & = 
 \int_{\R^n} u_{f} \p_{t}w(x',0)dx'-\int_{\R^{n}}\int_0^\infty \nabla u_{f}\cdot \nabla w dtdx' 
\\ & = \int\limits_{W} f \p_t w(x',0) dx' +\int\limits_{\R^n}  w(x',0)\p_t u_f(x',0) dx' \\ & = \int\limits_{W} f \p_t w(x',0) dx',
\end{split}\end{equation}
since $u_f$ is a weak solution and because of the boundary conditions satisfied by $v_f$ and $u_f$. We remark that on a rigorous level, we will have to insert suitable cut-off functions in order to make use of the equation satisfied by $w$.\\

\emph{Step 2.} By the arbitrary choice of $f\in C^\infty_c(W)$ we now have $\p_t w = 0$ in $W\times \{0\}$. By virtue of the unique continuation property, it then follows that $w\equiv 0$ in $\Omega_e\times \R_+$. In particular, both $w|_{\partial \Omega \times \R_+}$ and $ \p_\nu w|_{\partial \Omega \times \R_+}$ vanish. We use this to conclude the Hahn-Banach argument. Let $v\in S\subset H^1(\Omega)$ and $\beta_1 \in C_c^{\infty}((0,\infty))$ such that $\int\limits_{0}^{\infty} \beta_1(t) dt = 1$, $\beta_1 \geq 0$ and $\supp(\beta_1) \subset (1,2)$. Assume further that $\beta_k(t) = k^{-1} \beta_1(t/k)$. By formula \eqref{eq:normalization}, the support assumptions on $\psi$ and $w$, and using the fact that $v\in S$, we have
\begin{align*}
    -\psi(v) &  =  -\psi \left(\int\limits_{0}^{\infty} \beta_k(t) v dt \right)
    = -\lim\limits_{k\rightarrow \infty} \psi \left(\int\limits_{0}^{\infty} \beta_k(t) v dt \right)\\
&  =  \lim\limits_{k\rightarrow \infty} \left(\int_{\Omega}\int_0^\infty \nabla (v \beta_k) \cdot\nabla w dtdx' - \int_{\p(\Omega\times \R_+)} v \beta_k \p_\nu w dtdx' \right)\\ 
& =
\lim\limits_{k\rightarrow \infty} \left( \int_{\Omega}\nabla'v\cdot\nabla'\left(\int_0^\infty \beta_k w dt \right)dx' + \int\limits_{\Omega}\int\limits_0^\infty v \p_t \beta_k \p_t w dt dx' \right)\\
&= \lim\limits_{k\rightarrow \infty} \left( \int_{\p\Omega}(\nabla'v\cdot\nu' )\left(\int_0^\infty \beta_k w dt \right) dx' + \int\limits_{\Omega}\int\limits_0^\infty v \p_t \beta_k \p_t w dt dx' \right) \\
&  = \lim\limits_{k\rightarrow \infty}  \int\limits_{\Omega} \int\limits_0^\infty v \p_t \beta_k \p_t w dt dx'.
\end{align*}
It thus remains to show that 
\begin{align*}
\lim\limits_{k\rightarrow \infty}  \int\limits_{\Omega} \int\limits_0^\infty v \p_t \beta_k \p_t w dt dx' = 0.
\end{align*}
This follows from the definition of $\beta_k$:
\begin{align*}
\lim\limits_{k\rightarrow \infty}  \int\limits_{\Omega} \int\limits_0^\infty v \p_t \beta_k \p_t w dt dx' = 
\lim\limits_{k\rightarrow \infty} k^{-2} \int\limits_{\Omega} \int\limits_{(k,2k)} v \p_t \beta_1 \p_t w dt dx' = 0.
\end{align*}
This formally proves the density of $V\subset H^1(\Omega)$ in $S$. 
As above, due to the absence of decay, on a rigorous level we need to use bounds for $w$ in order to deduce the claimed limit from above. Moreover, in what follows below, in order to deal with the case $s\in (0,1)$ in a unified way, we will introduce slightly different vertical cut-off functions $\beta_k$. Hence in the rigorous proof below we will use suitable, more careful limiting arguments.\\

\emph{Step 3.} The density result for $V'$ then follows by trace estimates.
\end{proof}

In the next section we make these arguments rigorous and generalize them to $s\in (0,1)$ and to variable coefficient, uniformly elliptic metrics $a$ as defined in the introduction. This includes proving the solvability for \eqref{eq:dual_dense_formal}. In the rigorous implementation of Step 1 this will then necessitate various cut-off and limiting arguments which we present in detail below.

\subsection{A ``local'' proof of the density result of Proposition \ref{prop:density}}
\label{sec:proof_local}

In this section, we present a first rigorous proof of Proposition \ref{prop:density}. To this end, we begin by considering an auxiliary problem, which we will rely on in defining and discussing the adjoint equation in the density proof of Proposition \ref{prop:density}.

\begin{lem}[Solvability of the adjoint problem]\label{well-posed-adjoint}
{Let $s\in (0,1)$, $n \geq 3$}.
{Let $\tilde{a} \in C^2(\overline{\R^{n+1}_+},\R^{(n+1)\times (n+1)})$} be of the form $\tilde{a}(x')= \begin{pmatrix} a(x') & 0 \\ 0 & 1 \end{pmatrix}$ where $a\in {C^2(\R^n,\R^{n\times n}_{sym})}$ is uniformly elliptic.
Let $\psi \in \tilde{H}^{-1}(\Omega)$, and consider the problem 
\begin{align}
\label{eq:dual_dense}
\begin{split}
\nabla\cdot x_{n+1}^{1-2s}\tilde a \nabla w & = x_{n+1}^{1-2s}\psi \mbox{ in } \R^{n+1}_+,\\
w& = 0 \mbox{ in } \Omega_e \times \{0\},\\
\lim\limits_{x_{n+1}\rightarrow 0}x_{n+1}^{1-2s}\p_{n+1} w & = 0 \mbox{ in } \Omega .
\end{split}
\end{align}
The problem \eqref{eq:dual_dense} is solvable in $\dot H^1_{loc,0}(\R^{n+1}_+,x_{n+1}^{1-2s})$, that is, there exists $w\in \dot H^1_{loc}(\R^{n+1}_+,x_{n+1}^{1-2s})$ whose trace vanishes in $\Omega_e\times \{0\}$ and such that
$$
\int\limits_{\R^{n+1}_+} x_{n+1}^{1-2s} \nabla \varphi \cdot\tilde  a\nabla w dx =-\langle \psi, \int\limits_{0}^{\infty} x_{n+1}^{1-2s} \varphi(\cdot, x_{n+1}) dx_{n+1} \rangle_{\tilde{H}^{-1}(\Omega),H^1(\Omega)}$$
for all 
\begin{align*}
\varphi\in H^1_{c,0}(\R^{n+1}_+,x_{n+1}^{1-2s}):=\{v \in H^{1}(\R^{n+1}_+,x_{n+1}^{1-2s}): \ v \mbox{ has compact support in } \overline{\R^{n+1}_+}, \ v|_{\Omega_e\times {\{0\}}} = 0\}.
\end{align*}
\end{lem}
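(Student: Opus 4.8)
The plan is to construct the solution $w$ explicitly rather than by an energy argument in a (weighted) homogeneous Sobolev space, because $\psi$ has no decay in the $x_{n+1}$-direction: the right-hand side $x_{n+1}^{1-2s}\psi$ is essentially constant in the normal variable, so the naive finite-energy functional $\int x_{n+1}^{1-2s}|\nabla w|^2 - \langle\psi,\int_0^\infty x_{n+1}^{1-2s}\varphi\,dx_{n+1}\rangle$ is not coercive on any reasonable space (the linear term is not continuous on $\dot H^1(\R^{n+1}_+,x_{n+1}^{1-2s})$). The way around this is the duality principle of Proposition \ref{prop:duality} together with the observation from \eqref{eq:w}: if $g$ solves the \emph{Dirichlet} Caffarelli--Silvestre problem $\nabla\cdot x_{n+1}^{2s-1}\tilde a\nabla g = 0$ in $\R^{n+1}_+$ with $g = G$ on $\R^n$, then $w(x',x_{n+1}) := \int_{x_{n+1}}^\infty t^{1-2s} g(x',t)\,dt$ solves the dual \emph{Neumann}-type problem with $\lim_{x_{n+1}\to 0}x_{n+1}^{1-2s}\p_{n+1}w = -G$ on $\R^n$. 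So the strategy is: (i) first solve the \emph{local} problem $\nabla'\cdot a\nabla' G = \psi$ on $\R^n$ (or on a slightly larger bounded Lipschitz set, then cut off) with $G$ supported appropriately; since $\psi\in\tilde H^{-1}(\Omega)$ and $a$ is uniformly elliptic with $a\equiv\mathrm{Id}$ outside a bounded set, this is a standard elliptic solvability statement giving $G\in H^1(\R^n)$ (up to handling the decay at infinity of the Newtonian-type potential, which is harmless in $\dot H^1_{loc}$); (ii) take $g$ to be the $(2s{-}1)$-weighted Caffarelli--Silvestre (i.e.\ $(1{-}s)$-harmonic) extension of $G$; (iii) set $w(x',x_{n+1}):=\int_{x_{n+1}}^\infty t^{1-2s}g(x',t)\,dt$.

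Then I would verify the three required properties of this $w$ in turn. First, the bulk equation: the formal computation already displayed in the text for the function in \eqref{eq:w} (differentiating under the integral sign, using that $g$ solves the $x_{n+1}^{2s-1}$-weighted equation) shows $\nabla\cdot x_{n+1}^{1-2s}\tilde a\nabla w = x_{n+1}^{1-2s}(\nabla'\cdot a\nabla' G) = x_{n+1}^{1-2s}\psi$; this must be reinterpreted weakly, and the justification of differentiation under the integral and of the weak formulation is exactly the kind of argument run in Theorem \ref{prop:equation_1}, using the vertical decay estimates of Lemma \ref{CS-estimates-vertical} (the paper's Lemma on $|\nabla' g|,|\p_t g|$ decaying like $k^{-n-2s}$, $k^{-n-1}$ for $t\sim k$) to make $\int_{x_{n+1}}^\infty t^{1-2s}g(x',t)\,dt$ finite and $\dot H^1_{loc}$. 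Second, the Neumann condition on $\Omega\times\{0\}$: $\lim_{x_{n+1}\to 0}x_{n+1}^{1-2s}\p_{n+1}w = \lim_{x_{n+1}\to 0}\big(-x_{n+1}^{1-2s}\cdot x_{n+1}^{1-2s}g\big) = -\lim_{x_{n+1}\to0} x_{n+1}^{2(1-2s)}g(x',x_{n+1})$ — wait, more carefully $\p_{n+1}w = -x_{n+1}^{1-2s}g$, so $x_{n+1}^{1-2s}\p_{n+1}w = -x_{n+1}^{2-4s}g$; one wants instead to read off the Neumann trace directly from $w$'s construction, namely $\lim_{x_{n+1}\to 0}x_{n+1}^{1-2s}\p_{n+1}w$ should be computed by noting $\p_{n+1}w(x',x_{n+1}) = -x_{n+1}^{1-2s}g(x',x_{n+1})$ so that the relevant weighted normal derivative of $w$ in the $x_{n+1}^{1-2s}$-geometry is $x_{n+1}^{1-2s}\p_{n+1}w \to 0$ as $x_{n+1}\to0$ (since $g$ is bounded near the boundary and $2-4s$ may be negative — here one must instead use that the natural flux for the $1{-}2s$ problem attached to $w$ is $-g(x',0) = -G(x')$, matched against $\psi$ in the weak formulation, rather than the pointwise limit). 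I would therefore phrase this second step entirely weakly: test the bulk identity against $\varphi\in H^1_{c,0}(\R^{n+1}_+,x_{n+1}^{1-2s})$, integrate by parts in $x_{n+1}$ inside the definition of $w$ (Fubini, as in Theorem \ref{prop:equation_1}), and check that the boundary term at $x_{n+1}=0$ produces exactly $-\langle\psi,\int_0^\infty x_{n+1}^{1-2s}\varphi\,dx_{n+1}\rangle$ while the term on $\Omega_e\times\{0\}$ vanishes because $G$ (and hence the trace of $g$, hence $w(\cdot,x_{n+1})$'s would-be trace) is supported in $\overline\Omega$. Third, the Dirichlet condition $w=0$ on $\Omega_e\times\{0\}$: this is where the support of $G$ matters — I need $G$ chosen supported in $\overline\Omega$, so that $g$'s trace and therefore $w(x',0)=\int_0^\infty t^{1-2s}g(x',t)\,dt$ need not vanish, but the relevant trace for the \emph{dual} problem is $\lim_{x_{n+1}\to0}x_{n+1}^{1-2s}\p_{n+1}w = -G$, which vanishes on $\Omega_e$; the genuine Dirichlet trace of $w$ on $\R^n$ is then a \emph{consequence} to be identified, not imposed. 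So actually I would solve $\nabla'\cdot a\nabla' G = \psi$ with the constraint that is natural, and let the Dirichlet trace of $w$ be whatever it is; re-reading the statement, the condition "$w=0$ in $\Omega_e\times\{0\}$" forces $G$'s extension $g$ to have vanishing trace there — this is automatic since $\psi\in\tilde H^{-1}(\Omega)$ lets us choose $G\in\tilde H^1(\Omega)$ (supported in $\overline\Omega$), whence $g$'s trace is supported in $\overline\Omega$ and so is $w$'s, but one must additionally check $w(x',0)$ itself vanishes on $\Omega_e$, which follows because $g(x',t)$ for $x'\in\Omega_e$ is the extension of a function vanishing near $x'$ and decays, giving $\int_0^\infty t^{1-2s}g(x',t)\,dt = 0$ there — alternatively, and more robustly, one observes that $w$ restricted to $\Omega_e\times\R_+$ solves the homogeneous equation with zero lateral-bottom data and decays, hence vanishes by uniqueness.

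For the local solvability in step (i): since $a$ is uniformly elliptic and $a\equiv\mathrm{Id}$ outside a bounded set, and $\psi\in\tilde H^{-1}(\Omega)\subset H^{-1}(\R^n)$ is compactly supported, the equation $\nabla'\cdot a\nabla' G = \psi$ on $\R^n$ has a solution $G\in\dot H^1(\R^n)$ (for $n\ge 3$ this is exactly where the hypothesis $n\ge3$ is used — $\dot H^1(\R^n)$ embeds in $L^{2n/(n-2)}$ and the Lax--Milgram/Riesz argument on the homogeneous space goes through), and then the weighted extension $g$ of $G$ lies in $\dot H^1(\R^{n+1}_+,x_{n+1}^{2s-1})$ with the decay estimates of Lemma \ref{CS-estimates-vertical} applicable. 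I would then assemble: define $w$ by the vertical integral, show $w\in\dot H^1_{loc}(\R^{n+1}_+,x_{n+1}^{1-2s})$ using those decay bounds (so the weighted gradient is locally $L^2$ but not globally, consistent with the statement), show the trace of $w$ vanishes on $\Omega_e\times\{0\}$ as above, and verify the weak identity by the cut-off/Fubini argument. The main obstacle, I expect, is precisely the bookkeeping at the boundary $x_{n+1}=0$: one is integrating by parts in a degenerate/singular weight, $w$ is defined by an improper integral, and one must rigorously pass the $\partial_{n+1}$ onto $\varphi$ and correctly identify the $x_{n+1}=0$ boundary contribution as the $\tilde H^{-1}$--$H^1$ pairing of $\psi$ with $\int_0^\infty x_{n+1}^{1-2s}\varphi(\cdot,x_{n+1})\,dx_{n+1}$ — this needs the vertical integrability of $\varphi$ (which follows from $\varphi$ having compact support in $\overline{\R^{n+1}_+}$, so the $x_{n+1}$-integral is over a bounded interval and $x_{n+1}^{1-2s}\in L^1_{loc}$) and a careful justification of Fubini for the iterated integral $\int_{\R^n}\int_0^\infty\int_{x_{n+1}}^\infty$, exactly paralleling the argument already carried out in the proof of Theorem \ref{prop:equation_1}. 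A secondary subtlety is uniqueness/well-posedness of the local extension $g$ and its decay at spatial infinity, which is standard but should be cited. Everything else is a routine, if lengthy, adaptation of the formal calculation in \eqref{eq:w} and of the techniques in Section \ref{sec:duality}.
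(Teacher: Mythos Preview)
There is a genuine gap: your vertical-integral construction does \emph{not} produce a solution of the inhomogeneous bulk equation. If $g$ is any Caffarelli--Silvestre extension (with the dual weight $x_{n+1}^{2s-1}$) of $G$, then the correct vertical integral is $w(x',x_{n+1})=\int_{x_{n+1}}^\infty t^{2s-1}g(x',t)\,dt$ (note the exponent $2s-1$, not $1-2s$ as you wrote), and the formal computation you cite --- the one for \eqref{eq:w} in the text --- shows precisely that this $w$ solves the \emph{homogeneous} equation $\nabla\cdot x_{n+1}^{1-2s}\tilde a\nabla w=0$ in $\R^{n+1}_+$, not $x_{n+1}^{1-2s}\psi$. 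The duality principle exchanges Dirichlet and Neumann data between the two weights, but both bulk equations are homogeneous; the information about $\psi=\nabla'\cdot a\nabla' G$ is used up in making the tangential and normal pieces cancel. So your $w$ satisfies the wrong bulk equation, and the rest of the argument (boundary conditions, support of $G$) cannot recover from this. A related error is the claim that $G$ can be taken in $\tilde H^1(\Omega)$: solving $\nabla'\cdot a\nabla' G=\psi$ on all of $\R^n$ does not give $G$ supported in $\overline\Omega$, and solving the Dirichlet problem on $\Omega$ and extending by zero introduces a single-layer term on $\partial\Omega$, so it does not satisfy the equation on $\R^n$.

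The paper's construction is different and uses exactly the piece you are missing. One first solves $\nabla'\cdot a\nabla' u_1=\psi$ in $\R^n$ (this is where $n\ge3$ enters, via Lax--Milgram on $\dot H^1(\R^n)$), and then takes the \emph{constant-in-$x_{n+1}$} extension $\tilde u_1(x',x_{n+1}):=u_1(x')$. Since $\partial_{n+1}\tilde u_1=0$, one has $\nabla\cdot x_{n+1}^{1-2s}\tilde a\nabla\tilde u_1 = x_{n+1}^{1-2s}\nabla'\cdot a\nabla' u_1 = x_{n+1}^{1-2s}\psi$, so $\tilde u_1$ solves the correct inhomogeneous bulk equation and automatically has vanishing weighted Neumann trace; but its Dirichlet trace on $\Omega_e\times\{0\}$ equals $u_1$, which does not vanish. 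The paper then corrects by subtracting the $s$-extension $P_s u_1$ (this kills the Dirichlet trace on all of $\R^n$ without touching the bulk equation, at the price of introducing a nonzero Neumann flux) and adding a further function $u_2\in\dot H^1(\R^{n+1}_+,x_{n+1}^{1-2s})$ solving a well-posed mixed homogeneous problem that restores the Neumann condition on $\Omega$ while keeping the Dirichlet trace zero on $\Omega_e$. The final solution is $w=\tilde u_1-P_s u_1+u_2$. Your step (i) coincides with the paper's first move, but the mechanism for lifting to $\R^{n+1}_+$ is the constant extension, not the vertical-integral/duality construction.
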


\begin{rmk}
\label{rmk:bdry}
By the regularity of $w$, we may in particular define the (weighted) normal boundary data as a distribution in $\dot{H}^{-s}_{loc}(\R^n)$: Indeed, for $w$ as in Lemma \ref{well-posed-adjoint} and 
\begin{align*}
\varphi  \in H^{1}_{c}(\R^{n+1}_+,x_{n+1}^{1-2s}):=\{\varphi \in H^{1}(\R^{n+1}_+,x_{n+1}^{1-2s}): \ \varphi \mbox{ has compact support in } \overline{\R^{n+1}_+}\},
\end{align*}
we set
\begin{align*}
\int_{\R^n} \varphi(x',0)\lim\limits_{x_{n+1}\rightarrow 0}  x_{n+1}^{1-2s}\p_{n+1}w dx'
& := \int_{\R^{n+1}_+} x_{n+1}^{1-2s} \nabla\varphi\cdot \tilde a\nabla w dx \\ & \quad +  \langle \psi, \int\limits_{0}^{\infty} x_{n+1}^{1-2s} \varphi(\cdot, x_{n+1}) dx_{n+1} \rangle_{\tilde{H}^{-1}(\Omega),H^1(\Omega)},
\end{align*}
and note that by the assumptions on $\varphi$ and the mapping properties of $w$ this indeed yields that $\lim\limits_{x_{n+1}\rightarrow 0}  x_{n+1}^{1-2s}\p_{n+1}w \in \dot{H}^{-s}_{loc}(\R^n)$.
Using this definition, in the proof of Proposition \ref{prop:density}, for suitable choices of $\varphi,v$ (which will be built from $w$), we will often consider the following bilinear form:
$$B(v,\varphi):= -\int_{\R^{n+1}_+} x_{n+1}^{1-2s} \nabla\varphi\cdot\tilde  a\nabla v dx + \int_{\R^n}   \varphi(x',0) \lim\limits_{x_{n+1}\rightarrow 0} x_{n+1}^{1-2s}\p_{n+1}vdx'.$$
\end{rmk}

\begin{proof}[Proof of Lemma \ref{well-posed-adjoint}]
We start by observing that for $n\geq 3$ the $n$-dimensional problem
\begin{align*}
\nabla'\cdot a\nabla' u_1 & = \psi \mbox{ in } \R^{n}
\end{align*}
is solvable in the space $L^{\frac{2n}{n-2}}(\R^n)\cap \dot{H}^{1}(\R^n)$ via standard energy estimates. We define the function $\tilde{u}_1 \in L^{\infty}(\R,L^{\frac{2n}{n-2}}(\R^n))\cap L^{\infty}(\R,\dot{H}^{1}(\R^n))$ to be constant in the vertical direction, i.e., $\tilde{u}_1(x',x_{n+1}):= u_1(x')$. Building on this, we next consider the problem
\begin{align}
\label{eq:dual_aux_2}
\begin{split}
\nabla\cdot x_{n+1}^{1-2s}\tilde a \nabla u_2 & = 0 \mbox{ in } \R^{n+1}_+,\\
u_2 & = 0 \mbox{ on } \Omega_e \times \{0\},\\
\lim\limits_{x_{n+1}\rightarrow 0}x_{n+1}^{1-2s}\p_{n+1} u_2 & = \lim\limits_{x_{n+1}\rightarrow 0}x_{n+1}^{1-2s}\p_{n+1} P_su_1 \mbox{ on } \Omega ,
\end{split}
\end{align}
where $P_s$ denotes the Caffarelli-Silvestre extension operator (c.f. \cite{ST10} and \eqref{eq:CS_Stinga}) extending functions from $\R^n$ to $\R^{n+1}_+$ corresponding to the operator $\nabla\cdot x_{n+1}^{1-2s}\tilde a \nabla$. 

In order to deduce the existence of an $\dot{H}^1(\R^{n+1}_+, x_{n+1}^{1-2s})$ solution to \eqref{eq:dual_aux_2}, we next show that $\lim\limits_{x_{n+1}\rightarrow 0}x_{n+1}^{1-2s}\p_{n+1} P_su_1  \in H^{-s}(\Omega)$. Indeed, this follows from the fact that $u_1 \in \dot{H}^1(\R^n)\cap L^{\frac{2n}{n-2}}(\R^n)$: We consider the splitting
\begin{align}\label{eq:splitting}
u_1 = \eta_{B_R} u_1 + (1-\eta_{B_R})m_{h}(D) u_1 + (1-\eta_{B_R}) m_{\ell}(D) u_1,
\end{align}
where $R>0$ is such that $\overline{\Omega} \subset B_R$, and $\eta_{B_R}$ is a smooth cut-off function supported in $B_{2R}$ which equals one in $B_R$.
The functions $m_{\ell}(D)$ and $m_h(D)$ are a low and a high frequency projection, i.e., for $f\in \mathcal{S}'(\R^n)$ we set $$m_{\ell}(D) f = \F^{-1} (\chi_{B_2} \F f), \qquad m_{h}(D) f = \F^{-1} ((1-
\chi_{B_2}) \F f),$$ where $\chi_{B_2}$ is a smooth cut-off function supported in $B_4$ which equals one in $B_2$.  Let $g$ indicate either $\eta_{B_R}u_1$ or $(1-\eta_{B_R})m_{h}(D) u_1$. Since $u_1 \in  
\dot{H}^1(\R^n)\cap L^{\frac{2n}{n-2}}(\R^n)$, we immediately obtain that $g \in H^1(\R^n)$, and thus  $g\in H^s(\R^n)$. As a consequence, by the trace estimate 
$\dot{H}^1(\R^{n+1}_+, x_{n+1}^{1-2s}) \hookrightarrow \dot{H}^s(\R^n)$ and by energy estimates, $P_sg \in \dot{H}^1(\R^{n+1}_+, x_{n+1}^{1-2s})$. Thus, $\lim\limits_{x_{n+1}\rightarrow 0}x_{n+1}^{1-2s}\p_{n+1} P_s g \in \dot{H}^{-s}(\R^n)$, 
and in particular $\lim\limits_{x_{n+1}\rightarrow 0}x_{n+1}^{1-2s}\p_{n+1} P_s g \in H^{-s}(\Omega)$. For the remaining contribution on the right hand side of \eqref{eq:splitting}, we use Lemma \ref{CS-Lp} and 
the fact that $m_\ell(D)$ is a Fourier multiplier mapping $L^p$ into itself for all $p\in(1,\infty)$ in order to deduce that $\lim\limits_{x_{n+1}\rightarrow 0}x_{n+1}^{1-2s}\p_{n+1} P_s((1-\eta_{B_R}) m_{\ell}(D) u_1) \in L^2(\Omega)$. This eventually implies the claim $\lim\limits_{x_{n+1}\rightarrow 0}x_{n+1}^{1-2s}\p_{n+1} P_s u_1 \in H^{-s}(\Omega)$. 

We use this fact to discuss the solvability of \eqref{eq:dual_aux_2}. By the trace estimate $\dot{H}^1(\R^{n+1}_+, x_{n+1}^{1-2s})  \hookrightarrow \dot{H}^s(\R^n)$ the functional 
\begin{align*}
\dot{H}^1(\R^{n+1}_+, x_{n+1}^{1-2s}) \ni \varphi \mapsto \int\limits_{\Omega}\varphi \lim\limits_{x_{n+1}\rightarrow 0}x_{n+1}^{1-2s}\p_{n+1} P_su_1  dx' 
\end{align*}
is bounded, and problem \eqref{eq:dual_aux_2} has a unique solution $u_2 \in \dot{H}^{1}(\R^{n+1}_+, x_{n+1}^{1-2s})$ satisfying the required vanishing exterior data by standard energy estimates. Moreover, by the Sobolev-trace inequality, we obtain that $u_2(\cdot,0) \in L^{\frac{2n}{n-2s}}(\R^n)$. The function $u_2$ can hence also be viewed as $P_s(u_2(\cdot,0))$, that is as the extension of the $ L^{\frac{2n}{n-2s}}(\R^n)$ function $u_2(\cdot,0)$ with respect to the operator $\nabla\cdot x_{n+1}^{1-2s}\tilde a \nabla$ in $\R^{n+1}_+$. \\

We will now show that $w := \tilde u_1-P_s u_1+u_2$ is a $\dot H^1_{loc,0}(\R^{n+1}_+,x_{n+1}^{1-2s})$ solution of problem \eqref{eq:dual_dense}. To this end, we observe that for any $\varphi\in  H^1_{c,0}(\R^{n+1}_+,x_{n+1}^{1-2s})$ it holds
\begin{align*}
    \int_{\R^{n+1}_+} \nabla\varphi\cdot x_{n+1}^{1-2s}\tilde a\nabla P_su_1 dx &= \int_{\R^n} \varphi(x',0) \lim\limits_{x_{n+1}\rightarrow 0} x_{n+1}^{1-2s}\p_{n+1}P_su_1dx' , \\
    \int_{\R^{n+1}_+} \nabla\varphi\cdot x_{n+1}^{1-2s}\tilde a\nabla u_2 dx
     &= \int_{\Omega} \varphi(x',0)\lim\limits_{x_{n+1}\rightarrow 0} x_{n+1}^{1-2s}\p_{n+1}P_su_1dx' \\
    & \quad + \int_{\Omega_e} \varphi(x',0)\lim\limits_{x_{n+1}\rightarrow 0} x_{n+1}^{1-2s}\p_{n+1}u_2dx' .
\end{align*}
Moreover, we have
\begin{align*}
\int_{\R^{n+1}_+} \nabla\varphi\cdot x_{n+1}^{1-2s}\tilde a\nabla \tilde u_1 dx & = \int_{\R^{n+1}_+} x_{n+1}^{1-2s}\nabla'\varphi\cdot a(x')\nabla'  u_1 dx \\ & = \int_{\R^{n}}\nabla'\left(\int_0^\infty x_{n+1}^{1-2s}\varphi dx_{n+1}\right)\cdot a(x')\nabla'  u_1dx' 
\\ & = -\langle \psi, \int_{0}^{\infty} x_{n+1}^{1-2s} \varphi(\cdot, x_{n+1}) dx_{n+1} \rangle_{\tilde{H}^{-1}(\Omega),H^1(\Omega)}.
\end{align*}
Here we have used that $ \int_0^\infty x_{n+1}^{1-2s}\varphi(\cdot,x_{n+1}) dx_{n+1} \in H^1(\R^n)$
since, by the compactness of the support of $\varphi$, for $M>0$ large enough it holds
\begin{align*}
    \left\| \int_0^\infty x_{n+1}^{1-2s}\varphi(\cdot,x_{n+1}) dx_{n+1} \right\|^2_{\dot H^1(\R^n)} & = \int_{\R^n} \left|\nabla' \left( \int_0^\infty x_{n+1}^{1-2s}\varphi dx_{n+1} \right) \right|^2dx' 
    \\ & \leq \int_{\R^n} \left|\int_0^M x_{n+1}^{1-2s}\nabla'\varphi dx_{n+1} \right|^2dx'
    \\ & \leq \int_{\R^n} \left(\int_0^M x_{n+1}^{1-2s}dx_{n+1}\right)\left(\int_0^M x_{n+1}^{1-2s}|\nabla'\varphi|^2 dx_{n+1}\right) dx'
    \\ & \leq \frac{M^{2-2s}}{2-2s} \int_{\R^{n+1}_+} x_{n+1}^{1-2s}|\nabla'\varphi|^2 dx
    \\ & \lesssim \|\varphi\|^2_{ H^1(\R^{n+1}_+,x_{n+1}^{1-2s})}<\infty,
\end{align*}
and similarly for the $L^2(\R^n)$ norm. The desired result now follows by combining the above computations for the components $\tilde u_1, P_su_1$ and $u_2$ of the candidate solution $w$.
\end{proof}

Having fixed the ideas of our first proof of Proposition \ref{prop:density} in Section \ref{sec:heuristic}, we now turn to making them rigorous. To this end, in what follows we will carry out the relevant approximation and cut-off steps in detail.

\begin{proof}[Proof of Proposition \ref{prop:density}]
We start by proving the density of $V\subset H^1(\Omega)$ in $S$, where
$$S:=\{v\in H^1(\Omega): \nabla'\cdot a\nabla' v = 0 \mbox{ in } \Omega\}.$$
By the Hahn-Banach theorem, it suffices to show that if $\psi \in \tilde{H}^{-1}(\Omega)$ is such that $\psi(v_f)=0$ for all $f\in C_c^{\infty}(W)$, then also $\psi(v)=0$ for all $v\in S$. Here we use the notations $v_f, u_f$ as introduced in the formulation of the proposition.\\

\emph{Step 1a: Setting up the duality argument.} 
In order to approach the density result, we consider the auxiliary problem \eqref{eq:dual_dense}, which is solvable by virtue of Lemma \ref{well-posed-adjoint}. {In order to make use of it, we have to consider test functions with compact support. To this end, in what follows we first introduce two cut-off functions, one for the vertical, one for the tangential directions.}
Let $\eta_k(t) := \eta_1(t/k)$, where $\eta_1\in C^\infty_c([0,2])$ is a smooth cut-off function satisfying $\eta_1\equiv 1$ in a neighbourhood of {$t=0$} and $\int\limits_{0}^{\infty} t^{1-2s}\eta_1  dt= 1$. Observe that in particular
\begin{equation}\label{eq:normalization}
    k^{2s-2}\int\limits_0^\infty t^{1-2s}\eta_k(t)dt = k^{2s-2}\int\limits_0^\infty t^{1-2s}\eta_1(t/k)dt = \int\limits_0^\infty t^{1-2s}\eta_1(t)dt =1.
\end{equation}
Moreover, if $R>0$ is so large that $\overline{\Omega}\cup \overline{W}\subset B_R$, let $\sigma_k(x'):= \sigma_1(x'/k)$, where $\sigma_1\in C^\infty_c(B_{2R})$ is a smooth and radial cut-off function satisfying $\sigma_1\equiv 1$ in $B_R$. Observe that for all $f\in C^\infty_c(W)$, the function $u_{f,k}(x):=u_f(x)\sigma_k(x')\eta_k(x_{n+1})$ belongs to $\dot H^1_c(\R^{n+1}_+,x_{n+1}^{1-2s})$, and is thus an admissible test function for the adjoint problem \eqref{eq:dual_dense} (see Remark \ref{rmk:bdry}). Therefore, using the notation from the proof of Lemma \ref{well-posed-adjoint} and denoting by $w:= \tilde{u}_1 -P_su_1 + u_2$ the solution constructed in Lemma \ref{well-posed-adjoint}, we infer
\begin{equation}\begin{split}\label{eq:density-hb}
0 & = \psi(v_f) 
= \lim\limits_{k\rightarrow \infty} \langle \psi, \int_{0}^{\infty} t^{1-2s} u_{f,k}(\cdot, t) dt \rangle_{\tilde{H}^{-1}(\Omega),H^1(\Omega)} = \lim\limits_{k\rightarrow \infty} B(w,u_{f,k})  
\\ & = 
\lim\limits_{k\rightarrow \infty} \left( \int_{\R^n} u_{f,k} \lim\limits_{t\rightarrow 0} t^{1-2s}\p_{t}wdx'-\int_{\R^{n}}\int_0^\infty t^{1-2s}\nabla u_{f,k}\cdot \tilde a\nabla w dtdx' \right)
\\ & = \int\limits_{W} f \lim\limits_{t\rightarrow 0}(t^{1-2s}\p_t w) dx' +\lim\limits_{k\rightarrow \infty} I_k,
\end{split}\end{equation}
where the bulk contributions $I_k$ are given by
\begin{align}
\label{eq:Ik}
    I_k :&= -\int\limits_{\R^n} \int\limits_{0}^{\infty} t^{1-2s}(\sigma_k\eta_k\nabla w) \cdot \tilde a\nabla u_f dtdx'  -\int\limits_{\R^n} \int\limits_{0}^{\infty}  t^{1-2s}u_f \nabla (\sigma_k\eta_k) \cdot {\tilde{a}} \nabla w dtdx' .
\end{align}
The second equality in formula \eqref{eq:density-hb} holds by the $\tilde{H}^{-1}(\Omega)-H^{1}(\Omega)$ duality and by the fact that
\begin{align*}
\int\limits_{0}^{\infty} t^{1-2s}\sigma_k(x')\eta_k(t) u_f(x',t) dt \rightarrow \int\limits_{0}^{\infty} t^{1-2s} u_f(x',t) dt \mbox{ in } H^1(\Omega) \quad\mbox{ as } k \rightarrow \infty.
\end{align*}
Indeed, it holds that
\begin{align*}
&\left\| \int\limits_{0}^{\infty} t^{1-2s}\sigma_k(x')\eta_k(t) u_f(x',t) dt - \int\limits_{0}^{\infty} t^{1-2s} u_f(x',t) dt \right\|_{H^1(\Omega)}\\
&\leq \left\| \int\limits_{k}^{2k} t^{1-2s} (|u_f(\cdot, t)| + |\nabla u_f(\cdot, t)|) dt\right\|_{L^2(\Omega)}\\
&\leq \int\limits_{k}^{2k} t^{1-2s-n} \|u_f(\cdot, 0)\|_{L^1(\R^n)} dt  \lesssim  k^{1-2s-n} \|f\|_{L^2(\Omega)} \rightarrow 0 \mbox{ as } k \rightarrow \infty.
\end{align*}
In the above estimate, we used formula \eqref{eq:CS-decay} from Lemma \ref{CS-estimates-vertical}, the fact that $u_f(\cdot,0)$ has compact support, and the assumption $n\geq 3$.\\

\emph{Step 1b: Decay estimates for the error contributions.} Next, we seek to show that $I_k\rightarrow 0$ as $k\rightarrow\infty$. Integrating by parts the second term on the right hand side in the definition \eqref{eq:Ik} of $I_k$, and using the fact that $u_f$ is a weak solution, we get
\begin{align*}
    I_k &= -\int\limits_{\R^n} \int\limits_{0}^{\infty} t^{1-2s}\nabla(\sigma_k\eta_k w) \cdot \tilde a\nabla u_f dtdx' + \int\limits_{\R^n} \int\limits_{0}^{\infty} t^{1-2s}w\nabla(\sigma_k\eta_k) \cdot \tilde a\nabla u_f dtdx' \\ 
    & \quad +\int\limits_{\R^n} \int\limits_{0}^{\infty}  w\nabla\cdot({\tilde{a}} t^{1-2s}u_f \nabla (\sigma_k\eta_k)) dtdx' 
    \\ & = \int\limits_{\R^n} \sigma_k w(x',0)\lim\limits_{t\rightarrow 0} t^{1-2s}\p_t u_f dx' + 2\int\limits_{\R^n} \int\limits_{0}^{\infty} wt^{1-2s}\nabla(\sigma_k\eta_k) \cdot {\tilde{a}}\nabla u_f dtdx' \\ 
    & \quad +\int\limits_{\R^n} \int\limits_{0}^{\infty}  wt^{1-2s}u_f {\tilde L}(\sigma_k\eta_k) dtdx' +(1-2s)\int\limits_{\R^n} \int\limits_{0}^{\infty}  wt^{-2s}u_f \sigma_k\p_t\eta_k dtdx'
    \\ & = \int\limits_{B_{2Rk}} \int\limits_{0}^{2k} w t^{1-2s} \left( 2 \nabla(\sigma_k\eta_k)\cdot \tilde{a} \nabla u_f  + \left({\tilde L} (\sigma_k\eta_k) + \frac{1-2s}{t} \sigma_k  \p_t \eta_k\right)u_f  \right) dtdx'.
\end{align*}
{Here we have used the notation $\tilde L:= \nabla \cdot \tilde{a} \nabla$.}
The boundary term in the second step of the above computation is well-defined {and vanishes}, since by Lemma \ref{well-posed-adjoint} {the supports of $w(x',0)$, $\lim\limits_{t\rightarrow 0}(t^{1-2s}\p_t u_f)$ are disjoint and, moreover,} 
\begin{align*}
   \left| \int\limits_{\R^n}  \sigma_k w(x',0)\lim\limits_{t\rightarrow 0}(t^{1-2s}\p_t u_f) dx' \right| & \lesssim \|(\nabla'\cdot a\nabla')^s u_f(\cdot,0)\|_{H^{-s}(\Omega)}\|w(\cdot,0)\|_{H^{s}(\Omega)} <\infty.
\end{align*}  
Here we have used that $w(x',0)= \tilde u_1(x') - P_s u_1(x',0) + u_2(x',0)$, with $\tilde u_1\in \dot{H}^1(\R^n)\cap L^{\frac{2n}{n-2}}(\R^n)$ and $P_s u_1(\cdot, 0), u_2(\cdot,0) \in H^s_{loc}(\R^n)\cap L^{\frac{2n}{n-2s}}(\R^n)$.

We are {now} ready to estimate the bulk terms $I_k$. Let $A_k := B_{2Rk}\setminus B_{Rk}$, {where $R>0$ denotes the radius from Step 1a.} Using the boundedness of $\tilde{a}, \nabla \tilde{a}$, we compute
\begin{align*}
    |I_k| & \lesssim  \int\limits_{B_{2Rk}} \int\limits_{0}^{2k} t^{1-2s} |w| \left(  |\nabla(\sigma_k\eta_k)|\, |\nabla u_f|  + \left(|{\tilde L} (\sigma_k\eta_k)| + t^{-1} |\sigma_k| \,  |\p_t \eta_k|\right)|u_f|  \right) dtdx'
    \\ & \lesssim
    \int\limits_{B_{2Rk}} \int\limits_{0}^{2k} t^{1-2s} |w| \left(  (|\nabla'\sigma_k| + |\p_t\eta_k|)\, |\nabla u_f|  + \left(|{\tilde L}\sigma_k| + |\p_t^2\eta_k| + t^{-1} |\p_t \eta_k|\right)|u_f|  \right) dtdx'
     \\ & \lesssim
    k^{-1}\int\limits_{\R^n} \int\limits_{k}^{2k} t^{1-2s} |w| \left(  |\nabla u_f|  + \frac{|u_f|}{k}  \right) dtdx' + k^{-1}\int\limits_{A_k} \int\limits_{0}^{2k} t^{1-2s} |w| \left( |\nabla u_f|  + \frac{|u_f|}{k}  \right) dtdx' \\ & =: I_{1,k}(w)+ I_{2,k}(w).
\end{align*}

We estimate the contributions in $I_{1,k}(w)$ and $I_{2,k}(w)$ separately.

\medskip

\emph{Step 1 b(i): Estimate for $I_{1,k}(w)$.} We begin by considering the term $I_{1,k}(w)$. Using the triangle inequality, we first deduce bounds for the expression
\begin{align*}
\int\limits_{\R^n} |\tilde w(x',t)| |\nabla^j u_f(x',t)|  dx',
\end{align*}
where $\tilde{w}$ denotes any of the functions $u_1, P_su_1, u_2$. As the functions $P_s u_1, u_2$ have decay in the vertical direction, while the function $u_1$ does not have such decay, we split the proof again into two parts, discussing first the case $\tilde{w} = u_1$ and then the cases $\tilde{w} =  P_su_1, u_2$.

\smallskip

\emph{The case $\tilde{w}=u_1$}.
Let us first denote by $\tilde w$ the function $u_1$, and assume $j\in\{0,1\}$. Then by formula \eqref{eq:extension_est}
\begin{align*}
\int\limits_{\R^n} |\tilde w(x',t)| |\nabla^j u_f(x',t)|  dx' & \leq \|\tilde w \|_{L^{r_1}(\R^n)}\|\nabla^j u_f(\cdot, t)\|_{L^{r_2}(\R^n)} 
\\ & \lesssim  t^{n(\frac{1}{p_2}-1)-j} \|\tilde w\|_{L^{r_1}(\R^n)} \|u_f(\cdot,0)\|_{L^{q_2}(\R^n)},
\end{align*}
where 
\begin{align}
\label{eq:conjug_cond}
\frac{1}{r_1} + \frac{1}{r_2} = 1, \qquad \mbox{ and } \qquad  1+\frac{1}{r_2} = \frac{1}{p_2} + \frac{1}{q_2}.
\end{align}
Using that $u_1 \in L^{\frac{2n}{n-2}}(\R^n)$, we choose $r_1 = \frac{2n}{n-2}$ and $r_2 = \frac{2n}{n+2}$.
In order to obtain the maximal possible decay, we then choose $q_2 = 1$ (for which we use that $u_f(\cdot,0)\in \tilde{H}^s(\Omega\cup W) \subset L^1(\Omega\cup W)$ by Hölder's inequality). As a consequence,
\begin{equation*}\begin{split}
|I_{1,k}(\tilde w)| & \lesssim k^{-1} \|\tilde w\|_{L^{r_1}(\R^n)} \|f\|_{H^{s}(W)} \int\limits_{k}^{2k} t^{-2s+n(\frac{1}{p_2}-1)} dt \\
& \leq k^{-2s+n(\frac{1}{p_2}-1)}  \|\tilde w\|_{L^{r_1}(\R^n)} \|f\|_{H^{s}(W)}.
\end{split}\end{equation*}
Since $\frac{1}{p_2}-1\leq 0$, we thus infer that $I_{1,k}(\tilde w)$ vanishes as $k\rightarrow\infty$. 

\smallskip

\emph{The case $\tilde{w}=P_s u_1, u_2$}.
Let us next denote by $\tilde w$ any of the functions $ P_su_1, u_2$, and assume $j\in\{0,1\}$. Then by formula \eqref{eq:extension_est}
\begin{align*}
\int\limits_{\R^n} |\tilde w(x',t)| |\nabla^j u_f(x',t)|  dx' & \leq \|\tilde w(\cdot, t)\|_{L^{r_1}(\R^n)}\|\nabla^j u_f(\cdot, t)\|_{L^{r_2}(\R^n)} 
\\ & \lesssim t^{n(\frac{1}{p_1}-1)} t^{n(\frac{1}{p_2}-1)-j} \|\tilde w(\cdot,0)\|_{L^{q_1}(\R^n)} \|u_f(\cdot,0)\|_{L^{q_2}(\R^n)}
\\ & = t^{n(1-\frac{1}{q_1}-\frac{1}{q_2})-j} \|\tilde w(\cdot,0)\|_{L^{q_1}(\R^n)} \|u_f(\cdot,0)\|_{L^{q_2}(\R^n)},
\end{align*}
where 
\begin{align}
\frac{1}{r_1} + \frac{1}{r_2} = 1, \qquad \mbox{ and } \qquad  1+\frac{1}{r_i} = \frac{1}{p_i} + \frac{1}{q_i} \quad \mbox{ for } i\in\{ 1,2\}.
\end{align}
In order to obtain the maximal possible decay, we again choose $q_2 = 1$. This yields decay of the form $t^{-n/q_1-j}$, and thus
\begin{equation*}
\begin{split}
|I_{1,k}(\tilde w)| & \lesssim k^{-1} \|\tilde w(\cdot,0)\|_{L^{q_1}(\R^n)} \|f\|_{H^{s}(W)} \int\limits_{k}^{2k} t^{-2s-n/q_1} dt \leq k^{-2s-n/q_1} \|\tilde w(\cdot,0)\|_{L^{q_1}(\R^n)} \|f\|_{H^{s}(W)}.
\end{split}\end{equation*}
By choosing $q_1= \frac{2n}{n-2s}$ or $q_1 = \frac{2n}{n-2}$, respectively, we ensure that $\|\tilde w(\cdot,0)\|_{L^{q_1}(\R^n)}<\infty$ (for $u_2, P_s u_1$, respectively), and thus $I_{1,k}(\tilde w)$ vanishes as $k\rightarrow\infty$. As a result of the last two estimates, we have obtained that $I_{1,k}(w)$ itself vanishes as $k\rightarrow\infty$.

\smallskip

\emph{Step 1b(ii): Estimate for $I_{2,k}(w)$.}
In order to estimate the last term $I_{2,k}(w)$, we compute as in Lemma \ref{CS-estimates-vertical} ({borrowing the kernel notation $K_{0,t}(x'):= \frac{t^{2s}}{(|x'|^2 + t^2)^{\frac{n}{2} +s}}$ from there})
\begin{align*}
\|u_f(\cdot, t)\|^r_{L^{r}(A_k)} & \lesssim \int\limits_{A_k} |(|u_f(\cdot,0)|\ast K_{0,t})(x')|^r dx'  \\ &  \lesssim t^{2sr}\int\limits_{A_k} \left(\int\limits_{\Omega\cup W} \frac{|u_f(z,0)|}{(|x'-z|^2 + t^{2})^{\frac{n}{2} +s}}dz \right)^rdx' \\ & \lesssim \frac{t^{2sr}k^n}{(k^2 + t^{2})^{r(\frac{n}{2} +s)}}\|u_f(\cdot,0)\|^r_{L^1(\R^n)},
\end{align*}
and similarly
\begin{align*}
\|\nabla u_f(\cdot, t)\|^r_{L^{r}(A_k)} \lesssim \frac{t^{(2s-1)r} k^n}{(k^2 + t^{2})^{r(\frac{n}{2} +s)}} \|u_f(\cdot,0)\|_{L^1(\R^n)}^r.
\end{align*}
We now again split the discussion of the estimate into two cases by the triangle inequality.

\smallskip

\emph{The case $\tilde{w}=u_1$}.
In the case that $\tilde{w} = u_1$, by the above computation, we have for $j\in\{0,1\}$ and $r_1 = \frac{2n}{n-2}$
\begin{align*}
\int\limits_{A_k} |\tilde w(x',t)| |\nabla^j u_f(x',t)|  dx' & \leq \|\tilde w\|_{L^{r_1}(\R^n)}\| \nabla^j u_f(\cdot, t)\|_{L^{r_2}(A_k)} 
\\ & \lesssim \frac{t^{2s-j}k^{n/r_2}}{(k^2 + t^{2})^{\frac{n}{2} +s}} \|\tilde w\|_{L^{r_1}(\R^n)} \|u_f(\cdot,0)\|_{L^1(\R^n)},
\end{align*}
which leads to
\begin{align*}
 &   k^{j-2}\int\limits_{A_k} \int\limits_{0}^{2k} t^{1-2s} |\tilde w(x',t)| |\nabla^j u_f(x',t)|  dtdx' \\
 &  \lesssim  \|u_1 \|_{L^{r_1}(\R^n)} \|u_f(\cdot,0)\|_{L^1(\R^n)} k^{j-2+\frac{n}{r_2}}  \int\limits_{0}^{2k}  \frac{t^{1-j}}{(k^2 + t^{2})^{\frac{n}{2} +s}}  dt 
    \\ & \lesssim  \|u_1 \|_{L^{r_1}(\R^n)} \|f\|_{H^s(W)} k^{-n-2s+ \frac{n}{r_2}} \int\limits_{0}^{2}  \frac{\tau^{1-j}}{(1 + \tau^{2})^{\frac{n}{2} +s}}  d\tau .
\end{align*}
Using that $r_1,r_2 $ are dual exponents and, hence, $r_2 = \frac{2n}{n+2}$, we observe that the above term is finite for all $k>0$, and has a decay of the form $k^{-2s + 1 -\frac{n}{2}}$. Since $-2s + 1 -\frac{n}{2}<0$ for $n\geq 3$, the contribution $I_{2,k}(\tilde w)$ vanishes as $k\rightarrow\infty$.\\

\smallskip

\emph{The case $\tilde{w} = P_s u_1, u_2$.}
As above, by the decay estimates from \eqref{eq:extension_est}, we have for $j\in\{0,1\}$
\begin{align*}
\int\limits_{A_k} |\tilde w(x',t)| |\nabla^j u_f(x',t)|  dx' & \leq \|\tilde w(\cdot, t)\|_{L^{r_1}(\R^n)}\| \nabla^j u_f(\cdot, t)\|_{L^{r_2}(A_k)} 
\\ & \lesssim t^{n(\frac{1}{p_1}-1)}\frac{t^{2s-j}k^{n/r_2}}{(k^2 + t^{2})^{\frac{n}{2} +s}} \|\tilde w(\cdot,0)\|_{L^{q_1}(\R^n)} \|u_f(\cdot,0)\|_{L^1(\R^n)},
\end{align*}
which leads to
\begin{align*}
   & k^{j-2}\int\limits_{A_k} \int\limits_{0}^{2k} t^{1-2s} |\tilde w(x',t)| |\nabla^j u_f(x',t)|  dtdx' \\
    & \lesssim  \|\tilde w(\cdot,0)\|_{L^{q_1}(\R^n)} \|u_f(\cdot,0)\|_{L^1(\R^n)} k^{j-2+\frac{n}{r_2}}  \int\limits_{0}^{2k}  \frac{t^{n(\frac{1}{p_1}-1)+1-j}}{(k^2 + t^{2})^{\frac{n}{2} +s}}  dt 
    \\ & \lesssim  \|\tilde w(\cdot,0)\|_{L^{q_1}(\R^n)} \|f\|_{H^s(W)} k^{-\frac{n}{q_1}-2s} \int\limits_{0}^{2}  \frac{\tau^{n(\frac{1}{p_1}-1)+1-j}}{(1 + \tau^{2})^{\frac{n}{2} +s}}  d\tau .
\end{align*}
Recalling that the exponents $r_1, r_2, p_1, p_2, q_1,q_2$ satisfy the same constraints as in \eqref{eq:conjug_cond} above, and by choosing $p_1=1$, and $q_1$ as for $I_{1,k}$, we observe that the above term is finite for all $k>0$, and has a decay of the form $k^{-\frac{n}{q_1}-2s}$. Thus $I_{2,k}(w)$ vanishes as $k\rightarrow\infty$.\\

\emph{Step 2: Unique continuation and conclusion of the Hahn-Banach argument.} We are now left with $\int\limits_{W} f \lim\limits_{t\rightarrow 0}(t^{1-2s}\p_t w) dx'=0$, which by the arbitrary choice of $f\in C^\infty_c(W)$ gives $\lim\limits_{t\rightarrow 0}t^{1-2s}\p_t w = 0$ in $W\times \{0\}$. Since also $w = 0$ in $W \times \{0\}$, by virtue of the unique continuation property, it then follows that $w\equiv 0$ in $\Omega_e\times \R_+$. In particular, both $w|_{\partial \Omega \times \R_+}$ and $ \p_\nu w|_{\partial \Omega \times \R_+}$ vanish as distributions, and $\lim\limits_{x_{n+1}\rightarrow 0}x_{n+1}^{1-2s}\p_{n+1}w \equiv 0$ in $\R^n$. \\

We use this to conclude the Hahn-Banach argument. Let $v\in S\subset H^1(\Omega)$, and fix an extension $Ev \in H^1(\R^n)$ whose support is contained in a bounded open set $\Omega'\supset \Omega$. 
In order to avoid dealing with boundary terms on $\R^n \times \{0\}$, we introduce a further cut-off function.  Given $b\in { (0,1)}$, consider a smooth function $\gamma_{b}: \R_+\rightarrow {(0,b)}$ such that supp$(\gamma_{b})\subseteq [0,\frac{2-b}{1-b}]$ and $\gamma_{b}(t)\equiv b$ for $t\in [1,\frac{1}{1-b}]$. It is easily shown that one can assume $\int_0^\infty \gamma_{b}(t)dt=\frac{b}{1-b}$ and $|\nabla^\ell \gamma_{b}|\leq C$, where $\ell\in\{0,1,2\}$ and $C>1$ is independent of $b$. 
Observe that for all $k\in\N$
$$I_{b,k}:=\int_0^\infty t^{1-2s}\gamma_{b}(t-k)dt = \int_0^{\frac{2-b}{1-b}} (t+k)^{1-2s}\gamma_{b}(t)dt, $$
where $I_{b,k}$ depends continuously on the parameter $b$, and therefore
\begin{equation*}
I_{b,k} \in \begin{cases}
\frac{b\, k^{1-2s}}{1-b}\left(1 , \left(1+\frac{2-b}{k(1-b)}\right)^{1-2s} \right), &\text{if $s\in (0,\frac{1}{2}]$},\\
\frac{b\, k^{1-2s}}{1-b}\left( \left(1+\frac{2-b}{k(1-b)}\right)^{1-2s}, 1 \right), &\text{if $s\in (\frac{1}{2},1)$}.
\end{cases}
\end{equation*}
In both cases we see that for $b\in {(0,1)}$ the values reached by $I_{b,k}$ can get both arbitrarily large and arbitrarily close to $0$. Thus by continuity for all $k\in\N$ we can find $b_{k,s}\in (0,1)$ such that $I_{b_{k,s},k}=1$. Let now $\beta_k(t):= \gamma_{b_{k,s}}(t-k)$ and $R_{k,s}:= k+\frac{1}{1-b_{k,s}}$. By the above construction, $\beta_k: (0,\infty) \rightarrow [0,1]$ verifies
\begin{align*}
&\mbox{supp}(\beta_k) \subseteq (k, R_{k,s}+1),\\
&\beta_k(t) = b_{k,s} \mbox{ for } t\in (k+1,R_{k,s}),\\
& |\nabla^{\ell} \beta_k(t)| \leq C,\\
& \int\limits_{0}^{\infty} t^{1-2s}\beta_k(t) dt = 1,
\end{align*}
where $\ell \in \{0,1,2\}$ and {the constant} $C>1$ is independent of $k$.

By formula \eqref{eq:normalization} and the support assumption on $\psi$, for all $k\in\N$ we have
$$ \psi(v) = \langle \psi, v \int_0^\infty t^{1-2s}\beta_k dt \rangle_{\tilde H^{-1}(\Omega),H^1(\Omega)} =\langle \psi, \int_0^\infty t^{1-2s}\beta_k(t)\,Ev(\cdot) dt \rangle_{\tilde H^{-1}(\Omega),H^1(\Omega)}.$$
Since $\beta_k \,Ev\in \dot H^1_c(\R^{n+1}_+,x_{n+1}^{1-2s})$, we further deduce $\psi(v) =  B(w,\beta_k \,Ev)$. Therefore, using the support information on $w$ and observing that the boundary conditions on $\R^n \times \{0\}$ vanish due to the support conditions for $\beta_k$, we obtain
\begin{align*}
    \psi(v) & =   - \int_{\R^{n}}\int_0^\infty t^{1-2s}\nabla(\beta_k \, Ev)\cdot\tilde  a\nabla w dtdx' 
    \\ & = 
    - \left( \int_{\R^{n}}\int_0^\infty t^{1-2s}Ev\,\p_{t}\beta_k \p_{t} w dtdx' + \int_{\R^{n}}a\nabla'(Ev)\cdot\nabla'\left(\int_0^\infty t^{1-2s}\beta_k  w dt\right)dx' \right)
    \\ & = 
    - \left( \int_{\Omega}\int_0^\infty t^{1-2s}v\p_{t}\beta_k \p_{t} w dtdx' + \int_{\Omega}a\nabla'v\cdot\nabla'\left(\int_0^\infty t^{1-2s}\beta_k  w dt\right)dx' \right).
\end{align*}
Next, we seek to argue that the second contribution vanishes by making use of the equation satisfied by $v$. To this end, we need to validate that the function $\int_0^\infty t^{1-2s}\beta_k  w dt$ is an admissible test function in this equation. To this end, we observe that 
\begin{align*}
    \left\|\int_0^\infty t^{1-2s}\beta_k  w dt\right\|_{\dot H^1(\Omega)}^2 & = \int_\Omega\left(\int_0^{2k} t^{1-2s}\beta_k  \nabla'w dt\right)^2 dx' 
    \\ & \leq 
    \int_\Omega\left(\int_0^{2k} t^{1-2s}\beta_k^2 dt\right)\left(\int_0^{2k} t^{1-2s}|\nabla'w|^2 dt\right)dx'
    \\ & \lesssim 
   \left\|w\right\|_{\dot H^1(x_{n+1}^{1-2s},\Omega\times(0,2k))}^2<\infty.
\end{align*}
Since {by the unique continuation property} $w$ vanishes on $\p\Omega\times\R_+$, we see that $\int_0^\infty t^{1-2s}\beta_k  w dt$ vanishes on $\p\Omega$, and thus $\int_0^\infty t^{1-2s}\beta_k  w dt$ belongs to $ \dot H^1_0(\Omega)$. Moreover, by the Poincar\'e inequality and the boundedness of $\Omega$ it holds that $H^1_0(\Omega)= \dot H^1_0(\Omega)$ with equivalent norms, and thus $\int_0^\infty t^{1-2s}\beta_k  w dt\in H^1_0(\Omega)\subset H^1(\Omega)$. This allows us to compute
\begin{align*}
\int_{\Omega}a\nabla'v\cdot\nabla'\left(\int_0^\infty t^{1-2s}\beta_k  w dt\right)dx' = \int_{\p\Omega}a\nabla'v\cdot\nu' \left(\int_0^\infty t^{1-2s}\beta_k  w dt\right)dx' =0,
\end{align*}
since $v\in S$ and $w=0$ in $\p\Omega\times\R_+$. Thus we are left with
\begin{align*}
    \psi(v) & = 
    -\int_{\Omega}\int_0^\infty t^{1-2s}v\p_{t}\beta_k \p_{t} w dtdx'.
\end{align*}
 Now, the desired result $\psi(v)=0$ follows by passing to the limit in $k \rightarrow \infty$. In fact, to this end, we first note that $\p_t w = - \p_t P_s u_1 + \p_t u_2$. Hence, if $\tilde w$ is any of the functions $ P_su_1, u_2$, we can estimate as in Step 1b 
\begin{align*}
     \int\limits_{k}^{k+1}  \int\limits_{\Omega} |v\p_t \tilde w| dx'dt & \leq \int\limits_{k}^{k+1} \|v\|_{L^{r_2}(\Omega)}\|\p_t \tilde w(\cdot,t)\|_{L^{r_1}(\R^n)} dt
     \\ & \leq  
     \|v\|_{L^{r_2}(\Omega)}\|\tilde w(\cdot,0)\|_{L^{q_1}(\R^n)}
     \int\limits_{k}^{k+1} t^{n(1/r_1-1/q_1)-1} dt 
     \\ & \lesssim  
     \|v\|_{L^{r_2}(\Omega)}\|\tilde w(\cdot,0)\|_{L^{q_1}(\R^n)}
      k^{n(1/r_1-1/q_1)-1} .
 \end{align*}
 As a consequence,
 \begin{align*}
     &\int\limits_{(k,k+1)\cup (R_{k,s}, R_{k,s}+1)} t^{1-2s}|\p_t\beta_k|  \int\limits_{\Omega} |v \p_t \tilde w|  dx'dt \\
     & \lesssim 
    { k^{1-2s}} \int\limits_{k}^{k+1} \int\limits_{\Omega} |v \p_t \tilde w|  dx'dt 
   +  { R_{k,s}^{1-2s}} \int\limits_{R_{k,s}}^{R_{k,s}+1} \int\limits_{\Omega} |v \p_t \tilde w|  dx'dt\\
     & \lesssim (k^{ n/r_1-n/q_1-2s} + R_{k,s}^{ n/r_1-n/q_1-2s}) \|v\|_{L^{r_2}(\Omega)}\|\tilde w(\cdot,0)\|_{L^{q_1}(\R^n)}.
 \end{align*}
 Let us choose $r_1 = \infty$, $r_2 = 1$ and $q_1=\frac{2n}{n-2}$ (for $\tilde{w}= P_s u_1)$, and $q_1=\frac{2n}{n-2s}$ (for $\tilde{w}=u_2$). We note that for both choices of $q_1$ it holds that $ n/r_1-n/q_1-2s<0$ since $n\geq 3$ and that $\|v\|_{L^1(\Omega)}\leq \|v\|_{L^2(\Omega)}<\infty$ by the bounded domain assumption. As a consequence, passing to the limit $k\rightarrow \infty$ implies that
 \begin{align*}
    \psi(v) & = 
    -\lim\limits_{k\rightarrow \infty} \int_{\Omega}\int_0^\infty t^{1-2s}v\p_{t}\beta_k \p_{t} w dtdx' = 0.
\end{align*}
 
 Eventually, we have hence obtained that if $\psi \in \tilde{H}^{-1}(\Omega)$ is such that $\psi(v_f)=0$ for all $f\in C_c^{\infty}(W)$, then also $\psi(v)=0$ for all $v\in S$. It now follows by the Hahn-Banach theorem that $V\subset H^1(\Omega)$ is dense in $S$.
\medskip

\emph{Step 3: Trace estimates.} The last step of our proof will be a trace argument. Let $g\in H^{\frac{1}{2}}(\p\Omega)$, and consider the unique solution $u\in H^1(\Omega)$ to the problem
\begin{align*}
\begin{split}
\nabla'\cdot a\nabla'u & = 0 \mbox{ in } \Omega,\\
u& = g \mbox{ in } \p\Omega.
\end{split}
\end{align*}
We have $u\in S$ by definition, and thus for all $\epsilon >0$ it is possible to find $v_\epsilon\in V$ such that $\|u-v_\epsilon\|_{H^1(\Omega)}\leq \epsilon$. Since we have the trace estimate
$$ \|g-v_\epsilon|_{\p\Omega}\|_{H^{\frac{1}{2}}(\p\Omega)} \lesssim \|u-v_\epsilon\|_{H^1(\Omega)}\leq \epsilon, $$
we see that $v_\epsilon|_{\p\Omega}\in V'$ approximates $g$ in the norm of $H^{\frac{1}{2}}(\p\Omega)$. Thus we obtain the final result $\overline {V'} =H^{\frac{1}{2}}(\partial \Omega )$.
\end{proof}

\subsection{A nonlocal proof of the density argument}
\label{sec:proof_nonlocal}

Relying on the local variant of the proof of the density result from above, we translate this into a second, completely nonlocal proof. Since in this section we follow a nonlocal approach in $\R^n$, the operator $L^s=(-\nabla' \cdot a \nabla')^s $ should be understood in a spectral or, equivalently, in a kernel representation sense. Under sufficient regularity, this interpretation is however equivalent to the previous one obtained by means of the Caffarelli-Silvestre extension, as proved in \cite{CS07,ST10}. As this only complements our result, for simplicity of presentation we only give the rigorous proof for $n\geq 5$ in which case for any $s\in (0,1)$ the problem can be treated purely with Hilbert space methods.

\begin{proof}[A nonlocal proof of Proposition \ref{prop:density}]
Let $n\geq 5$.
We again argue by the Hahn-Banach theorem. Using the notation from Section \ref{sec:proof_local}, it suffices to prove that if for some $\psi \in \tilde{H}^{-1}(\Omega)$ it holds that $\psi(v_f)=0$ for all $f\in C_c^{\infty}(W)$, then also $\psi(v)=0$ for all $v\in S$.

To this end, we fix $\psi\in \tilde H^{-1}(\Omega)$ satisfying the Hahn-Banach assumption, and then define $w_1 \in \dot{H}^1(\R^n)\cap L^{\frac{2n}{n-2}}(\R^n)$ to be the weak solution to $L w_1 = \psi$ in $\R^n$, i.e., we assume that
 \begin{align*}
 \int\limits_{\R^n} a\nabla w_1 \cdot \nabla \varphi dx = \psi (\varphi)  \qquad\mbox{ for all } \varphi \in \dot{H}^1(\R^n).
 \end{align*}
 We observe that since $\psi \in \tilde{H}^{-1}(\Omega)$ is compactly supported and since, by assumption, $n\geq 5$, the decay of the fundamental solution implies that $w_1 \in H^1(\R^n)$. In particular, we also obtain that $w_1 \in H^s(\R^n)$ for any $s\in (0,1)$.
 Further, we let $u_1\in  H^s(\R^n)$ be the unique solution of 
	\begin{align*}
		L^su_1 &=0\mbox{ in }\Omega,\\
		u_1 &= w_1      \mbox{ in }\Omega_e,
	\end{align*}
which is a well-posed problem in light of the fact that $w_1\in H^{s}(\R^n)$. 

\emph{Step 1:}
With the above notation and the decay and regularity properties, we obtain
\begin{align}
\label{eq:Hahn_Banach1}
0 &= \psi(v_f) = \langle  v_f, \psi \rangle_{H^{1}(\Omega),\tilde{H}^{-1}(\Omega)} 
= \langle  v_f, \psi \rangle_{H^{1}(\R^n), H^{-1}(\R^n)} =
\int\limits_{\R^n} a\nabla v_f  \cdot \nabla w_1  dx.
\end{align}
Next, we seek to show that 
\begin{align*}
\int\limits_{\R^n} a\nabla v_f  \cdot \nabla w_1  dx = (L^{s/2}u_f, L^{s/2} w_1)_{L^2(\R^n)}.
\end{align*}
To this end, we use two approximation arguments: 
Firstly, we observe that $Lv_f = L^su_f$ holds in a weak sense in $\R^n$ by Lemma \ref{lemma1} and a density argument. Indeed, if $\varphi\in C^\infty_c(\R^n)$ and $\{u_{f,k}\}_{k}\subset C^\infty_c(\R^n)$ is a sequence of smooth functions such that $\|u_{f,k}-u_f\|_{H^s(\R^n)}\leq 1/k$ for all $k\in\N$, then by the definition of $v_{f,k}:=L^{s-1}u_{f,k}$ we have $$(a\nabla v_{f,k},\nabla\varphi)_{L^2(\R^n)} = (L^{s/2}u_{f,k},L^{s/2}\varphi)_{L^2(\R^n)},$$
with
$$ \|\nabla(v_f-v_{f,k})\|_{L^2(\R^n)} \lesssim \|u_f-u_{f,k}\|_{H^s(\R^n)} \leq 1/k $$
and
$$ \|L^{s/2}(u_f-u_{f,k})\|_{L^2(\R^n)} \lesssim \|u_f-u_{f,k}\|_{H^s(\R^n)} \leq 1/k. $$
Secondly, by a similar approximation argument we have 
$$( L^{s/2} u_f,L^{s/2}w_{1,k} )_{L^2(\R^n)} = (a\nabla v_f, \nabla w_{1,k})_{L^2(\R^n)},$$
with
\begin{align*}
 \|\nabla(w_1-w_{1,k})\|_{L^2(\R^n)} &\leq \|w_1-w_{1,k}\|_{H^1(\R^n)} \leq 1/k, \\
 \|L^{s/2}(w_1-w_{1,k})\|_{L^2(\R^n)} &\leq \|w_1-w_{1,k}\|_{H^s(\R^n)} \leq \|w_1-w_{1,k}\|_{H^1(\R^n)} \leq 1/k. 
 \end{align*}
Thus, by \eqref{eq:Hahn_Banach1} and passing to the limit in the above two approximation arguments, we obtain
\begin{align}
\label{eq:Hahn_Banach2}
0=(a\nabla v_f, \nabla w_1)_{L^2(\R^n)}=( L^{s/2} u_f,L^{s/2}w_1 )_{L^2(\R^n)}.
\end{align}
Moreover, since $w_1-u_1$ and $u_f -f$ both belong to $\tilde H^s(\Omega)$, using the weak form of the equations satisfied by $u_f$ and $u_1$ we have
\begin{align}
\label{eq:Hahn_Banach3}
\begin{split}
    (  L^{s/2} u_f,   L^{s/2}w_1  )_{L^2(\R^n)} & - (  L^{s/2} f,   L^{s/2}u_1  )_{L^2(\R^n)}  \\ 
    & = (  L^{s/2} u_f,   L^{s/2}(w_1-u_1)  )_{L^2(\R^n)} + ( L^{s/2} (u_f-f),   L^{s/2} u_1  )_{L^2(\R^n)} = 0 .
    \end{split}
\end{align}
Therefore, combining \eqref{eq:Hahn_Banach2}, \eqref{eq:Hahn_Banach3}, we infer that
\begin{align*}
\langle   f,   L^s u_1  \rangle_{H^s(\R^n),H^{-s}(\R^n)} = (  L^{s/2} f,   L^{s/2}u_1  )_{L^2(\R^n)}=0.
\end{align*}
Since this holds for all $f\in C_c^{\infty}(W)$, we obtain that $ L^s u_1 = 0$ in $W$. As $u_1 = w_1$ in $W$ and since $L$ is a local operator, by recalling the equation for $w_1$ we also have $L u_1 = 0$ in $W$. As a consequence, we infer that for $v_1:= L^s u_1 \in H^{-s}(\R^n)$ it holds that
\begin{align*}
v_1 = 0 , \ L^{1-s} v_1 = 0 \mbox{ in } W.
\end{align*}
By the global unique continuation property for the fractional  elliptic operator $L^{1-s}$  \cite{R15,GSU20,GRSU20}, we hence deduce that $v_1 = 0$ in $\R^n$. As a consequence, $L^s u_1 = 0$ in $\R^n$. By virtue of the fact that $u_1 \in H^s(\R^n)$, we therefore conclude that $u_1 = 0$ in $\R^n$. As $u_1 = w_1$ in $\Omega_e$, this also implies that $w_1 = 0$ in $\Omega_e$.\\

\emph{Step 2:} We conclude the density argument similarly as above by noting that for $v \in S$ we have
\begin{align*}
\psi(v) = -\int\limits_{\Omega}  a \nabla' w_1 \cdot \nabla' v dx
= \langle w_1,   \nu \cdot a \nabla' v \rangle_{\partial \Omega}-\langle \nu \cdot a \nabla' w_1, v \rangle_{\partial \Omega}  = 0.
\end{align*}
Here we first noted that $w_1 = 0$ in $\Omega_e$ and then, in the last step, we used the fact that on $\partial \Omega$ it holds (in an $H^{\frac{1}{2}}(\partial \Omega)$ and $H^{-\frac{1}{2}}(\partial \Omega)$ sense, respectively) $w_1 = 0 = \nu \cdot a \nabla' w_1$ by Step 1.
\end{proof}

\subsection{Proof of Theorem \ref{thm:main}}

With the density result of Proposition \ref{prop:density} in hand, we turn to the proofs of Theorem \ref{thm:main} and Proposition \ref{prop:density_inversion}:

\begin{proof}[Proof of Theorem \ref{thm:main} and Proposition \ref{prop:density_inversion}]
By Proposition \ref{prop:density}, the operator 
\begin{align*}
{T_1}: \tilde{H}^s(W) \ni f \mapsto v_f(x')|_{\partial \Omega}:=\int\limits_{0}^{\infty} t^{1-2s} u_f(x',t)dt|_{\partial \Omega} \in H^{\frac{1}{2}}(\partial \Omega)
\end{align*}
is linear, bounded and has a dense image. 
Due to the continuity of the map $\Lambda: H^{\frac{1}{2}}(\partial \Omega) \rightarrow H^{-\frac{1}{2}}(\partial \Omega)$, this then implies that
\begin{align*}
\overline{T(\mathcal{C}_{s,a})}^{H^{\frac{1}{2}}(\partial \Omega) \times H^{-\frac{1}{2}}(\partial \Omega)} = \mathcal{C}_{a} 
\end{align*}
as claimed in Proposition \ref{prop:density_inversion}.
Next, by unique continuation, $(f|_{W},(-\D)^s u_f|_{W})$ determines $(x_{n+1}^{1-2s} u_f |_{\partial \Omega \times \R_+}, x_{n+1}^{1-2s}\p_{\nu}  u_f |_{\partial \Omega \times \R_+})$ and thus also $(v_f|_{\partial \Omega}, \p_{\nu} v_f|_{\partial \Omega})$. Combined with the density result, this proves that $\Lambda_s$ determines $\Lambda$.
\end{proof}

\section{Proof of Theorem \ref{thm:consequences}}
\label{sec:cons}
In this section, for completeness, we outline how the results of Theorem \ref{thm:main} imply the ones from Theorem \ref{thm:consequences}. This follows without any changes along the same lines as the argument from \cite{GU21} and is only included for the convenience of the reader.

\begin{proof}
Let $a_1,a_2 \in \R^{n\times n}$ be as in the statement of the theorem. Consider the fractional and classical Calder\'on problems having $a_j$ as coefficients, i.e.,
\begin{align*}
\begin{split}
(\nabla \cdot a_j \nabla)^s u & = 0 \mbox{ in } \Omega,\\
u & = f \mbox{ on } \Omega_e,
\end{split}
\end{align*}
and
\begin{align*}
\begin{split}
\nabla \cdot a_j \nabla v & = 0 \mbox{ in } \Omega,\\
v & = g \mbox{ on } \partial \Omega,
\end{split}
\end{align*}
for $j=1,2$. Here the Dirichlet data $f,g$ are such that $f\in H^s(\R^n)$ and $g\in H^{\frac{1}{2}}(\p\Omega)$, while the unique solutions $u,v$ are such that $u\in H^s(\R^n)$ and $v\in H^{1}(\Omega)$.   Assume that the nonlocal Dirichlet-to-Neumann maps {$\Lambda_{s,1}, \Lambda_{s,2}$}, which correspond to $a_1,a_2$ respectively, coincide. By Theorem \ref{thm:main}, this implies that the local Dirichlet-to-Neumann maps $\Lambda_{1}, \Lambda_{2}$, which correspond to $ a_1, a_2$ respectively, themselves coincide, since they are uniquely determined by $\Lambda_{s,1}, \Lambda_{s,2}$, respectively. Since by assumption the local Dirichlet-to-Neumann map $\Lambda_j$ uniquely determines the coefficient $a_j$ in the classical Calder\'on problem for $j=1,2$, we deduce that $a_1=a_2$ holds. This proves the theorem.
\end{proof}

\section{Tikhonov regularization}
\label{sec:tikh}

In this section we outline that the procedure of reconstructing the Dirichlet-to-Neumann data for the local problem from the nonlocal data is completely constructive. To this end we present the argument for Proposition \ref{prop:Tikhonov}.

\begin{proof}
The proof is based on the results contained in \cite[Chapter 4]{CK}. We start by observing that the linear operator $A$ is injective by unique continuation. In fact, if $\tilde u\in \mathcal V$ is such that $A\tilde u = 0$, then $\tilde u(x',0) = \lim\limits_{t\rightarrow 0}t^{1-2s}\p_{t}\tilde u(x',t) = 0$ on $W$, which implies $\tilde u \equiv 0$ on $\Omega_e\times \R_+$. As a result of the mapping property $A: \mathcal{V} \mapsto H^s(W)\times H^{-s}(W)$, the compact inclusions $H^s(W) \hookrightarrow H^{s-\epsilon}(W)$ and $H^{-s}(W) \hookrightarrow H^{-s-\epsilon}(W)$ as well as the injectivity of $A$ and \cite[Theorem 4.13]{CK}, we have that the operator $R_\alpha := (\alpha I +A^*A)^{-1}A^*$ is well-defined and a regularization scheme for $A$, which means that
$$\lim\limits_{\alpha\rightarrow 0} \|R_\alpha A \varphi - \varphi\|_{\dot H^1(x_{n+1}^{1-2s},\Omega_e\times\R_+)}=0, \qquad \mbox{ for all } \varphi\in \mathcal V.$$
 Moreover, since $H^{s}(W) \times H^{-s}(W)$ is compactly embedded in  $H^{s-\epsilon}(W) \times H^{-s-\epsilon}(W)$ for any $\epsilon>0$, we deduce that $A$ is compact. Using this and \cite[Theorem 4.14]{CK}, we obtain that $\tilde u_\alpha := R_\alpha ((f,\Lambda_sf))$ is the unique minimizer in $\mathcal V$ of the energy functional $J_\alpha$ corresponding to $(f,\Lambda_sf)\in  H^{s}(W) \times H^{-s}(W)$. 
\end{proof}

\section{Regularity estimates}
\label{sec:reg}

In this section we prove that the function $v(x'):=\int_0^\infty t^{1-2s}\tilde u(x',t)dt$, which was introduced in \eqref{eq:v} and was extensively used in the arguments above, satisfies the following regularity properties which were used in our arguments in the previous sections.

\begin{prop}
\label{prop:reg}
Let $\Omega, W \subset \R^n$ be bounded, open Lipschitz sets with $\overline{\Omega}\cap \overline{W} = \emptyset$.
Let $n\geq 3$, $u \in H^{s}(\R^n)$ with compact support in $\overline{W}\cup \overline{\Omega}$ and $\tilde{u}\in H^1(\R^{n+1}_+, x_{n+1}^{1-2s})$ be the associated Caffarelli-Silvestre extension with coefficient matrix $\tilde{a}$ as in the introduction. 
Let $w: {\R^{n+1}_+} \rightarrow \R$ be given by 
\begin{align*}
w(x',x_{n+1}):= \int\limits_{x_{n+1}}^{\infty} t^{1-2s} \tilde{u}(x',t) dt.
\end{align*}
Then, for $x_{n+1}>0$ the function $w$ is bounded and the limit $w(x,0) $ exists with $w(\cdot,0)\in H^1_{loc}(\R^n)$.
\end{prop}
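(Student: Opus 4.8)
The plan is to control the vertical integral $w(\cdot,x_{n+1})=\int_{x_{n+1}}^{\infty}t^{1-2s}\tilde u(\cdot,t)\,dt$ by splitting it at a fixed height $t=1$ and treating the two pieces with different tools: on $(1,\infty)$ by the pointwise and decay estimates of Lemma~\ref{CS-estimates-vertical} (formulas \eqref{eq:extension_est} and \eqref{eq:CS-decay}), which exploit the compactness of $\supp u$, and on $(0,1)$ by the weighted energy hypothesis $\tilde u\in H^1(\R^{n+1}_+,x_{n+1}^{1-2s})$. I would first record, using the Sobolev embedding $H^s(\R^n)\hookrightarrow L^{2n/(n-2s)}(\R^n)$ (valid since $0<s<1\le n/2$) and the compact support of $u$, that $u\in L^{2n/(n-2s)}(\R^n)\cap L^1(\R^n)$. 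For the boundedness, the kernel estimate gives, uniformly in $x'$, the bounds $|\tilde u(x',t)|\lesssim(|u|\ast K_{0,t})(x')\lesssim\|u\|_{L^{2n/(n-2s)}(\R^n)}\,t^{-(n-2s)/2}$ for $0<t\le1$ (by H\"older against $\|K_{0,t}\|_{L^{2n/(n+2s)}(\R^n)}\lesssim t^{-(n-2s)/2}$) and $|\tilde u(x',t)|\lesssim\|u\|_{L^1(\R^n)}\,t^{-n}$ for $t\ge1$ (since $K_{0,t}(y)\lesssim t^{-n}$ for $|y|\le t$ and $K_{0,t}(y)\le t^{-n}$ otherwise). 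As $s<1$ gives $1-2s>-1$ and $n\ge2$ gives $1-2s-n<-1$, splitting the integral at $t=1$ yields
\[
|w(x',x_{n+1})|\le\int_{x_{n+1}}^{1}t^{1-2s}|\tilde u(x',t)|\,dt+\int_{1}^{\infty}t^{1-2s}|\tilde u(x',t)|\,dt<\infty
\]
uniformly in $x'\in\R^n$, the first term being finite for every fixed $x_{n+1}>0$ even when $t^{1-2s-(n-2s)/2}$ fails to be integrable down to $t=0$.

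Next I would show that $v(x'):=\int_0^\infty t^{1-2s}\tilde u(x',t)\,dt$ defines the trace $w(\cdot,0)$ and lies in $L^2_{loc}(\R^n)$. The tail $\int_1^\infty t^{1-2s}\tilde u(\cdot,t)\,dt$ is in $L^\infty(\R^n)$ by the previous bounds; for the bottom piece, Minkowski's integral inequality followed by the Cauchy--Schwarz inequality in the weight $t^{1-2s}$ gives
\[
\Big\|\int_0^1 t^{1-2s}\tilde u(\cdot,t)\,dt\Big\|_{L^2(\R^n)}\le\Big(\int_0^1 t^{1-2s}\,dt\Big)^{1/2}\Big(\int_0^1 t^{1-2s}\|\tilde u(\cdot,t)\|_{L^2(\R^n)}^2\,dt\Big)^{1/2}\lesssim\|\tilde u\|_{H^1(\R^{n+1}_+,x_{n+1}^{1-2s})},
\]
the first factor being finite precisely because $s<1$. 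Hence $v\in L^2_{loc}(\R^n)$, and since $w(x',x_{n+1})=v(x')-\int_0^{x_{n+1}}t^{1-2s}\tilde u(x',t)\,dt$ while the same Cauchy--Schwarz bound forces $\|\int_0^{x_{n+1}}t^{1-2s}\tilde u(\cdot,t)\,dt\|_{L^2(B_R)}\to0$ as $x_{n+1}\to0$, the limit $w(\cdot,0)=v$ exists in $L^2_{loc}(\R^n)$.

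It remains to prove $v\in\dot H^1_{loc}(\R^n)$, from which $w(\cdot,0)=v\in H^1_{loc}(\R^n)$ follows. I would argue exactly as in the proof of Theorem~\ref{prop:equation_1}: with the vertical cut-offs $\eta_k$, the functions $v_k:=\int_0^\infty t^{1-2s}\eta_k(t)\tilde u(\cdot,t)\,dt$ converge to $v$ in $L^2_{loc}(\R^n)$ and $\nabla'v_k\to\int_0^\infty t^{1-2s}\nabla'\tilde u(\cdot,t)\,dt$ in $L^2_{loc}(\R^n)$ (using the $t$-decay of $\tilde u$ and $\nabla'\tilde u$ for large $t$), so $\nabla'v=\int_0^\infty t^{1-2s}\nabla'\tilde u(\cdot,t)\,dt$; differentiating $v_k$ under the integral is legitimate since the $t$-integral runs over a bounded interval and, by Fubini, $\nabla'\tilde u(\cdot,t)\in L^2(\R^n)$ for a.e.\ $t$. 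The two-region estimate used for $v$ applies verbatim to $\nabla'v$: on $(0,1)$ it is bounded in $L^2(\R^n)$ by Cauchy--Schwarz against $\int_0^1 t^{1-2s}\,dt<\infty$ together with $\int_0^1 t^{1-2s}\|\nabla'\tilde u(\cdot,t)\|_{L^2(\R^n)}^2\,dt\le\|\tilde u\|_{H^1(\R^{n+1}_+,x_{n+1}^{1-2s})}^2$, and on $(1,\infty)$ it is bounded in $L^\infty_{loc}(\R^n)$ via the gradient kernel bound $|\nabla'\tilde u(x',t)|\lesssim\|u\|_{L^1(\R^n)}\,t^{-n}$ for $x'$ in a fixed ball and $t\ge1$ (integrable in $t$ since $n\ge2$).

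The main obstacle is the behaviour as $x_{n+1}\to0$: since $u$ lies only in $H^s(\R^n)$ with $s<1$, one cannot estimate $\int_0^1 t^{1-2s}\nabla'\tilde u(\cdot,t)\,dt$ through $\|\nabla'u\|_{L^2(\R^n)}$, which is infinite; the remedy is to absorb the vertical derivative loss into the weighted Dirichlet energy of $\tilde u$ via the Cauchy--Schwarz step, which succeeds precisely because the weight $t^{1-2s}$ is integrable at $0$, i.e.\ because $s<1$. I would also stress that the confinement to $H^1_{loc}$ rather than $H^1(\R^n)$ is genuine: the large-$t$ tail only produces $L^\infty_{loc}$, not $L^2(\R^n)$, control, and indeed for $n\le4-4s$ the function $v$ decays only like $|x'|^{2-n-2s}$ at spatial infinity, which is not square integrable there.
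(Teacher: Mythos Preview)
Your proof is correct and follows a genuinely different route from the paper's. The paper controls $v$ and $\nabla' v$ entirely through pointwise kernel bounds: it derives the estimate $|D_{x'}^k w(x',y)|\lesssim\int_{B_R}|u(x'+z)|\,|z|^{-(n+k+2s-2)}\,dz$, and for $k=0$ concludes by Young's inequality since $|\cdot|^{-(n+2s-2)}$ is locally integrable. For $k=1$ this breaks down when $s\ge\frac12$, so the paper instead subtracts off $\nabla'\big[\int_0^\infty y^{1-2s}\tilde g(\cdot,y)\,dy\big]u(x')$ with $g$ a smooth cut-off equal to $1$ on $\supp u$, bounds the difference by the Gagliardo seminorm $\|u\|_{H^s}$, and then separately shows, via a heat-kernel integration-by-parts computation, that the subtracted term is bounded. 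This requires a case distinction in $s$ and a fairly delicate calculation.

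Your argument bypasses all of this by exploiting the \emph{hypothesis} $\tilde u\in H^1(\R^{n+1}_+,x_{n+1}^{1-2s})$ directly: splitting the $t$-integral at height $1$, the tail is controlled by the $L^1$ kernel decay, while the bottom piece is handled uniformly in $s\in(0,1)$ by the single Cauchy--Schwarz step
\[
\Big\|\int_0^1 t^{1-2s}\nabla'\tilde u(\cdot,t)\,dt\Big\|_{L^2(\R^n)}\le\Big(\int_0^1 t^{1-2s}\,dt\Big)^{1/2}\Big(\int_0^1 t^{1-2s}\|\nabla'\tilde u(\cdot,t)\|_{L^2(\R^n)}^2\,dt\Big)^{1/2},
\]
the first factor being finite precisely because $s<1$. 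This is shorter and avoids the singular-kernel subtraction, at the price of using the weighted $L^2$ norm of $\tilde u$ itself (for the $L^2$ bound on $v$), which the paper's purely kernel-based approach does not need. Two minor remarks: the correct bound from Lemma~\ref{CS-estimates-vertical} is $|\nabla'\tilde u(x',t)|\lesssim t^{-n-1}\|u\|_{L^1}$ rather than $t^{-n}$, though your weaker version still suffices; and the differentiation under the integral for $v_k$ deserves a sentence more (Fubini on the weighted space gives $\tilde u(\cdot,t)\in H^1(\R^n)$ for a.e.\ $t$, which is what justifies integrating the weak derivative against $\varphi$).
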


Before turning to the proof of {Proposition \ref{prop:reg}}, we show some decay estimates in the vertical direction for the Caffarelli-Silvestre extension. In order to simplify the notation, in what follows we will frequently make use of the abbreviation $L:=\nabla'\cdot a\nabla'$.

\begin{lem}\label{CS-estimates-vertical}
Let $n\in \N$ and $u \in H^{s}(\R^n)$, and denote by $\tilde{u}\in \dot H^1(\R^{n+1}_+, x_{n+1}^{1-2s})$ its associated Caffarelli-Silvestre extension with coefficient matrix $\tilde{a}$ as in the introduction. Then, for $y>0$ the function $\tilde{u}$ satisfies the following bounds
\begin{equation}\label{eq:CS-decay}
    |\tilde{u}(x',y)| \lesssim y^{-n}  \|u\|_{L^1(\R^n)}, \qquad |\nabla'\tilde{u}(x',y)| \lesssim y^{-n-1}  \|u\|_{L^1(\R^n)}.
\end{equation}
Moreover, if $1\leq r,p,q \leq \infty$ are such that $1+ \frac{1}{r} = \frac{1}{p} + \frac{1}{q}$, then $\tilde u$ also satisfies the estimates 
\begin{equation}\label{eq:extension_est}
\|\tilde u(\cdot, y)\|_{L^r(\R^n)} \lesssim y^{n/p-n}\|u\|_{L^{q}(\R^n)},\qquad \|(\nabla \tilde u)(\cdot, y)\|_{L^r(\R^n)} \lesssim y^{n/p-n  - 1}\|u\|_{L^{q}(\R^n)}.
\end{equation}
\end{lem}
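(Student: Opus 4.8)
The plan is to use the Poisson-type representation formula for the Caffarelli-Silvestre extension. For the constant-coefficient case $a \equiv \mathrm{Id}$ the extension $\tilde u(\cdot, y)$ of $u$ is given by convolution against the kernel $P_y(x') = c_{n,s}\, \frac{y^{2s}}{(|x'|^2 + y^2)^{(n+2s)/2}}$, and differentiating in $x'$ (or in $y$) produces kernels with an extra power of $(|x'|^2+y^2)^{-1/2}$, i.e. $|\nabla' P_y(x')| \lesssim \frac{y^{2s}}{(|x'|^2+y^2)^{(n+2s+1)/2}}$. From these one reads off the scaling: $\|P_y\|_{L^1(\R^n)} \lesssim 1$ (uniformly in $y$), $\|P_y\|_{L^\infty(\R^n)} \lesssim y^{-n}$, and more generally $\|P_y\|_{L^{p'}(\R^n)} \lesssim y^{n/p' - n}$ by scaling $x' \mapsto y x'$; likewise $\|\nabla P_y\|_{L^{p'}(\R^n)} \lesssim y^{n/p'-n-1}$. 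Then Young's convolution inequality $\|P_y \ast u\|_{L^r} \le \|P_y\|_{L^{p'}} \|u\|_{L^q}$ with the exponent relation $1 + 1/r = 1/p + 1/q$ (note $1/p' = 1/p$ in the way the statement is written, since here $p$ plays the role of the convolution-kernel exponent) gives exactly \eqref{eq:extension_est}, and \eqref{eq:CS-decay} is the special case $q=1$, $r = \infty$ (so $p=1$) for the first bound and $q = 1$, $r = \infty$ for the gradient bound.

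The point requiring care is that the present setting is \emph{not} constant coefficient: $a \in C^2(\R^n, \R^{n\times n}_{sym})$ is only uniformly elliptic, so there is no explicit Poisson kernel. I would handle this by invoking the Gaussian-type heat kernel bounds for the operator $L = \nabla' \cdot a \nabla'$ together with Stinga-Torrea's subordination formula (cf. \cite{ST10}), which represents the Caffarelli-Silvestre extension as
\[
\tilde u(x', y) = \frac{y^{2s}}{4^s \Gamma(s)} \int_0^\infty e^{-y^2/(4\tau)} e^{\tau L} u(x')\, \frac{d\tau}{\tau^{1+s}},
\]
with $e^{\tau L}$ the heat semigroup generated by $L$. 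Because $a$ is uniformly elliptic and $C^2$, the heat kernel $p_\tau(x',z')$ of $e^{\tau L}$ satisfies two-sided Gaussian bounds $p_\tau(x',z') \lesssim \tau^{-n/2} e^{-c|x'-z'|^2/\tau}$ together with gradient bounds $|\nabla'_{x'} p_\tau(x',z')| \lesssim \tau^{-(n+1)/2} e^{-c|x'-z'|^2/\tau}$ (Aronson-type estimates). Plugging these into the subordination integral and carrying out the $\tau$-integration reproduces precisely a kernel $K_{y}(x',z')$ dominated by $\frac{y^{2s}}{(|x'-z'|^2+y^2)^{(n+2s)/2}}$ and, for the gradient, by $\frac{y^{2s}}{(|x'-z'|^2+y^2)^{(n+2s+1)/2}}$; this is a standard computation (split the $\tau$-integral at $\tau = |x'-z'|^2 + y^2$). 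From that domination the argument proceeds exactly as in the constant-coefficient case: apply Young's inequality with the stated exponents, and use $\int_0^\infty \frac{y^{2s}\, \rho^{n-1}}{(\rho^2+y^2)^{(n+2s)/2}}\, d\rho \lesssim 1$ uniformly in $y$, plus scaling for the other $L^{p'}$ norms.

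The main obstacle, therefore, is not the convolution bookkeeping but establishing the pointwise kernel domination uniformly in $y > 0$ in the variable-coefficient case — that is, verifying that the Aronson Gaussian bounds and their first-order gradient analogues hold under the hypotheses $a \in C^2(\R^n,\R^{n\times n}_{sym})$ uniformly elliptic, and that the subordination integral then genuinely produces the claimed algebraic decay rates after integrating out $\tau$. Once the kernel $K_y$ and $\nabla' K_y$ are controlled by the explicit expressions $\frac{y^{2s}}{(|x'-z'|^2+y^2)^{(n+2s)/2}}$ and $\frac{y^{2s}}{(|x'-z'|^2+y^2)^{(n+2s+1)/2}}$, both \eqref{eq:CS-decay} and \eqref{eq:extension_est} follow at once, and in particular the compact-support hypothesis on $u$ used elsewhere is not even needed for this lemma — it only enters later when one upgrades $\|u\|_{L^1}$ bounds via $\|u\|_{L^1(\Omega)} \le |\Omega|^{1/2}\|u\|_{L^2(\Omega)}$.
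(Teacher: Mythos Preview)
Your proposal is correct and follows essentially the same route as the paper: the paper also invokes the Stinga--Torrea subordination formula, applies Aronson-type Gaussian bounds for $K_t$ and its gradient (citing \cite{ST10} and \cite{CJKS20}), evaluates the $t$-integral via the identity $\int_0^\infty t^{-n/2-\sigma-1}e^{-A/4t}\,dt \approx A^{-n/2-\sigma}$ to obtain exactly the kernel domination you describe, and then finishes with Young's inequality and the scaling computation for $\|K_{\rho,y}\|_{L^p}$. The only minor addition in the paper is that the $\partial_y$ derivative is treated explicitly (producing two terms, $y^{-1}(|u|\ast K_{0,y})$ and $y(|u|\ast K_{1,y})$, which both scale the same way), a detail you allude to but do not spell out.
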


\begin{proof}
Throughout the proof we will consider the situation that $y>0$.
    In order to infer the desired results, we use the heat kernel representation of the Poisson formula for the Caffarelli-Silvestre extension \cite[Theorem 2.1]{ST10}
\begin{equation}
\label{eq:CS_Stinga}
\tilde{u}(x',y) = c_s y^{2s}  \int\limits_{\R^n} \int\limits_{0}^{\infty} K_t(x',z) e^{-\frac{y^2}{4t}} \frac{dt}{t^{1+s}} u(z) dz.
\end{equation}
Here $K_t$ denotes the heat kernel, which for $t>0$ verifies $\p_tK_t(x,z) = LK_t(x,z)$ and the following estimates (see \cite{ST10} for the case $k=0$, and \cite{CJKS20} for the case $k=1$):
\begin{equation}\label{estimate-kernel}
|D_{x'}^kK_t(x',z)|\lesssim t^{-\frac{n+k}{2}} e^{-c \frac{|x'-z|^2}{t}}, \qquad k\in\{0,1\}.
\end{equation}
Using that
\begin{align}\label{formula-integral}
\int_0^\infty \frac{e^{-A/4t}}{t^{n/2+\sigma+1}}dt \approx A^{-n/2-\sigma},
\end{align}
(see, for instance, the argument on p.2104 in \cite{ST10}),
we compute {for $k\in \{0,1\}$}
\begin{equation*}\begin{split}
    |D_{x'}^k\tilde{u}(x',y)| & \lesssim y^{2s}  \int\limits_{\R^n} \int\limits_{0}^{\infty}|D_{x'}^k K_t(x',z)| e^{-\frac{y^2}{4t}} \frac{dt}{t^{1+s}} |u(z)| dz
    \\ & 
    \lesssim y^{2s}  \int\limits_{\R^n} |u(z)| \int\limits_{0}^{\infty}t^{-(\frac{n+k}{2}+1+s)} e^{-(c \frac{|x'-z|^2}{t}+\frac{y^2}{4t})} dt  dz
    \\ & 
    \lesssim y^{2s}  \int\limits_{\R^n} \frac{|u(z)|}{(|x'-z|^2+y^2)^{\frac{n}{2}+s+\frac{k}{2}}} dz
    \\ &
    = (|u|\ast K_{k/2,y})(x'),
\end{split}\end{equation*}
where $K_{\rho,y}(x'):= \frac{y^{2s}}{(|x'|^2 + y^{2})^{\frac{n}{2} +s + \rho}}$. In particular, {by Young's convolution inequality,} this implies the estimate
\begin{equation*}
    |D_{x'}^k\tilde{u}(x',y)| \lesssim y^{-n-k}  \|u\|_{L^1(\R^n)}, \ { k \in\{0,1\} } .
\end{equation*}
Similarly, we can compute
\begin{equation*}\begin{split}
    |\p_{y}\tilde{u}(x',y)| & \lesssim y^{2s-1}  \int\limits_{\R^n} \int\limits_{0}^{\infty}|K_t(x',z)| e^{-\frac{y^2}{4t}} \frac{dt}{t^{1+s}} |u(z)| dz + y^{2s+1}  \int\limits_{\R^n} \int\limits_{0}^{\infty}|K_t(x',z)| e^{-\frac{y^2}{4t}} \frac{dt}{t^{2+s}} |u(z)| dz
    \\ & \lesssim 
    y^{-1}  (|u|\ast K_{0,y})(x') + y (|u|\ast K_{1,y})(x').
\end{split}\end{equation*}
Therefore, by Young's convolution inequality we {deduce that}
\begin{align*}
\|\tilde u(\cdot, y)\|_{L^r(\R^n)} &\lesssim \|K_{0,y}(\cdot)\|_{L^p(\R^n)}\|u\|_{L^{q}(\R^n)},
\\ \|(\nabla \tilde u)(\cdot, y)\|_{L^r(\R^n)} &\lesssim \left(\|K_{1/2,y}(\cdot)\|_{L^p(\R^n)} + y^{-1}\|K_{0,y}(\cdot)\|_{L^p(\R^n)} + y\|K_{1,y}(\cdot)\|_{L^p(\R^n)} \right)\|u\|_{L^{q}(\R^n)},
\end{align*}
where $1+ \frac{1}{r} = \frac{1}{p} + \frac{1}{q}$. By changing variables, we obtain
\begin{align*}
\|K_{\rho,y}(\cdot)\|^p_{L^p(\R^n)} & \approx \int\limits_{0}^\infty \frac{y^{2sp}r^{n-1}}{(r^2 + y^{2})^{p(\frac{n}{2} +s + \rho)}}dr = \int\limits_{0}^\infty \frac{y^{n-pn  - 2\rho p}\tau^{n-1}}{(\tau^2 + 1)^{p(\frac{n}{2} +s + \rho)}}d\tau  \lesssim y^{n-pn  - 2\rho p},
\end{align*}
and thus
\begin{equation*}
\|\tilde u(\cdot, y)\|_{L^r(\R^n)} \lesssim y^{n/p-n}\|u\|_{L^{q}(\R^n)},\qquad \|(\nabla \tilde u)(\cdot, y)\|_{L^r(\R^n)} \lesssim y^{n/p-n  - 1}\|u\|_{L^{q}(\R^n)}.
\end{equation*}
\end{proof}

{With this auxiliary result in hand, we now turn to the proof of Proposition \ref{prop:reg}.}

\begin{proof}[Proof of Proposition \ref{prop:reg}] 
{In deducing the desired result, we argue in three steps.}

\emph{Step 1: {Well-definedness of the function $w$ for $x_{n+1}>0$, i.e., convergence of the integral defining it.}} Inequality \eqref{eq:CS-decay} immediately implies the following estimate for $w$ {and $y>0$}:
\begin{equation}\begin{split}\label{eq:function-w-decay}
    |D_{x'}^kw(x',y)| & \leq  \int\limits_{y}^{\infty} t^{1-2s} |D_{x'}^k\tilde{u}(x',t)| dt \lesssim y^{2-2s-n-k}  \|u\|_{L^1(\R^n)}, \qquad k\in\{0,1\} .
\end{split}\end{equation}
Given that $\supp(u)$ is compact and $u\in L^2(\R^n)$, we have $u\in L^1(\R^n)$, and thus the above expression is finite for a.e. $x=(x',x_{n+1})\in \R^{n+1}_+$. In particular, $w$ is well-defined. \\

\emph{Step 2: {The local $L^2$ estimate for $v(x'):=w(x',0)$.}}
We further seek to prove that $v(x'):=w(x',0)\in L^2(\Omega')$ for any $\Omega'\subset \R^n$ bounded and open. To this end, we aim at upgrading the right hand side of estimate \eqref{eq:function-w-decay} to be independent of the vertical variable. By virtue of estimate \eqref{eq:function-w-decay}, we may apply Fubini's theorem and infer for $k\in\{0,1\}$ and $y>0$
\begin{equation}\begin{split}\label{eq:better-estimate}
    |D_{x'}^kw(x',y)| & \lesssim \int\limits_{y}^{\infty}  \int\limits_{\R^n} \frac{t|u(z)|}{(|x'-z|^2+t^2)^{\frac{n+k}{2}+s}} dz dt
    \\ &
    \leq \int\limits_{\R^n} |u(x'+z)|\int\limits_{0}^{\infty}   \frac{t}{(|z|^2+t^2)^{\frac{n+k}{2}+s}} dt dz
    \\ &
    \approx \int\limits_{B_R} \frac{|u(x'+z)|}{|z|^{n+k+2s-2}} dz,
\end{split}\end{equation}
for some $R>0$ large enough, depending on the support of $u$. 
 We note that all the estimates from above are independent of $y \in (0,\infty)$, hence, the estimate transfers to the limit $v$ of $w(\cdot,y)$ as $y\rightarrow 0$.
Since $|\cdot|^{-(n+2s-2)}$ is locally integrable, $\supp(u)\subset \overline{\Omega} \cup \overline{W}$, and $\Omega'$ is bounded, for some large $r>0$ we obtain 
\begin{align*}
\|v(\cdot)\|_{L^2(\Omega')}^2 
&\leq C_{s} \int\limits_{\Omega'} \left| \int\limits_{\R^n} \frac{|u(z)|}{|x'-z|^{n+2s-2}}dz \right|^2 dx' \\
&\leq C_{s} \int\limits_{\R^n} \left| \int\limits_{\R^n} \frac{|u(z)|\chi_{B_r}(z-x')}{|z-x'|^{n+2s-2}}dz \right|^2 dx' \\ 
& =  C_{s} \left\| u\ast (\chi_{B_r}|\cdot|^{-n-2s+2}) \right\|_{L^2(\R^n)}^2 \\
& \leq C_s \|\chi_{B_r}(\cdot) |\cdot|^{-n-2s+2}\|_{L^1(\R^n)}^2 \|u\|_{L^2(\R^n)}^2\\
& \leq C_{n,s} \|u\|_{L^2(\R^n)}^2.
\end{align*}
Here we have used Young's convolution inequality in the last line together with the integrability of $\chi_{B_r}(\cdot) |\cdot|^{-n-2s+2}$. 
\\

\emph{Step 3: {The gradient estimate for $v(x')$, $x' \in \Omega'$.}} It remains to argue that we have $v\in H^1(\Omega')$.  {For $s\in (0,\frac{1}{2})$ this essentially follows from the argument in Step 2. For $s \in [\frac{1}{2},1)$, the analysis in the previous step is not detailed enough (as the kernel then is not integrable at zero). We thus need to perform a more accurate analysis.} 
To this end, let $0<r<R$ with $\supp(u)\subseteq B_r$, and assume that $g\in C^\infty_c(B_R)$ takes values in $[0,1]$ and verifies $g=1$ in $B_r$. {We denote the Caffarelli-Silvestre extension of $g$ by $\tilde{g}$.} Then, as in the proof of Lemma \ref{CS-estimates-vertical} and as in formula \eqref{eq:better-estimate}, we compute for $x_{n+1}>0$
\begin{align*}
|\nabla_{x'}w(x',x_{n+1})&-\nabla_{x'}\left[ \int_{x_{n+1}}^\infty y^{1-2s}\tilde g(x',y)dy \right]u(x')| = \\ & = c_s\left|\int_{x_{n+1}}^\infty y\int_{\R^n}\int_0^\infty \nabla_{x'}K_t(x',z)e^{-\frac{y^2}{4t}} \frac{dt}{t^{1+s}}(u(z)-g(z)u(x'))dzdy\right|
\\ & \lesssim \int\limits_{x_{n+1}}^{\infty}  \int\limits_{\R^n} \frac{y|u(z)-g(z)u(x')|}{(|x'-z|^2+y^2)^{\frac{n+1}{2}+s}} dz dy
    \\ & \lesssim \int_{\R^n}\frac{|u(z)-g(z)u(x')|}{|x'-z|^{n+2s-1}} dz. 
\end{align*}
Using the support condition of $g$ {and $u$}, we now obtain
\begin{align*} |\nabla_{x'}w(x',x_{n+1})-\nabla_{x'}\left[ \int_{x_{n+1}}^\infty y^{1-2s}\tilde g(x',y)dy \right]u(x')| & \lesssim \int_{B_R}\frac{|u(z)-u(x')|}{|x'-z|^{n+2s-1}} dz, 
\end{align*}
and therefore
\begin{align*}
\|\nabla_{x'}w(x',0)-\nabla_{x'}&\left[ \int_{0}^\infty y^{1-2s}\tilde g(x',y)dy \right]u(x')\|_{L^2(\Omega')}^2 \lesssim \int_{\Omega'} \left( \int_{B_R} \frac{|u(z)-u(x')|}{|x'-z|^{n+2s-1}} dz \right)^2 dx' \\ & \leq \int_{\Omega'} \left(\int_{B_R} \frac{|u(z)-u(x')|^2}{|x'-z|^{n+2s}} dz\right) \left( \int_{B_R} \frac{1}{|x'-z|^{n+2s-2}} dz\right) dx' \\
& \lesssim \int_{\Omega'} \int_{B_R} \frac{|u(z)-u(x')|^2}{|x'-z|^{n+2s}} dz dx' \\
& \leq \|u\|_{H^s(\R^n)}^2.
\end{align*}
Here we used that $H^s(\R^n) = W^{s,2}(\R^n)$ (see \cite[Theorem 3.16]{McLean}).

Since we are eventually interested in $\|\nabla_{x'} w(x',0)\|_{L^2(\Omega')}$, and as also
$$ \left\|\nabla_{x'}\left[ \int_{0}^\infty y^{1-2s}\tilde g(x',y)dy \right]u(x')\right\|_{L^2(\Omega')} \leq \|u\|_{L^2(\Omega')}\left\|\nabla_{x'} \int_{0}^\infty y^{1-2s}\tilde g(x',y)dy\right\|_{L^\infty(\Omega')},  $$
we {next seek} to show that 
\begin{align*}
|\nabla_{x'} \int_{0}^\infty y^{1-2s}\tilde g(x',y)dy|<{C<\infty \mbox{ for all } x' \in \overline{\Omega'}.}
\end{align*}
In other words, we aim to show that
$$ \left| \nabla_{x'}\int_{0}^\infty  \int\limits_{\R^n} \int\limits_{0}^{\infty} K_t(x',z) ye^{-\frac{y^2}{4t}} \frac{dt}{t^{1+s}} g(z) dzdy \right|<{C<}\infty, $$ or equivalently,
\begin{align}
\label{eq:idggrad}
\left| \nabla_{x'}\int_{B_R}g(z)\int_{0}^\infty  t^{-s}  K_t(x',z) dtdz \right|{<C<}\infty,
\end{align}
uniformly in $x'\in \Omega'$.

In order to obtain such bounds, we first study the auxiliary function $\int_{B_R}g(z)\int_{0}^\infty  t^{-s}  K_t(x',z) dtdz$.
Here we observe that $t^{-s} = \frac{\p_t(t^{1-s})}{1-s}$. Hence, integrating by parts in $t$, we obtain
\begin{align}
\label{eq:idg}
\begin{split}
-\int_{B_R}g(z)\int_{\varepsilon}^\infty  t^{-s}  K_t(x',z) &dtdz  \approx- \int_{B_R}g(z)\int_{\varepsilon}^\infty  \p_t(t^{1-s})  K_t(x',z) dtdz \\ & = \varepsilon^{1-s}\int_{ B_R}g(z)K_\varepsilon(x',z)dz - \int_{ B_R}g(z)\lim\limits_{t\rightarrow\infty}t^{1-s}K_t(x',z)dz   \\ & \quad + \int_{B_R}g(z)\int_{\varepsilon}^\infty  t^{1-s}  \p_tK_t(x',z) dtdz 
\\ & = \varepsilon^{1-s}\int_{ B_R}g(z)K_\varepsilon(x',z)dz + \int_{B_R}g(z)\int_{\varepsilon}^\infty  t^{1-s}  \p_tK_t(x',z) dtdz,
\end{split}
\end{align}
where the second boundary term vanishes in light of the fact that for $n\geq 3$ we have
$$ |\lim\limits_{t\rightarrow\infty}  t^{1-s}  K_t(x',z)| \lesssim \lim\limits_{t\rightarrow\infty}  t^{1-s-\frac{n}{2}} e^{-c \frac{|x'-z|^2}{t}} =0.  $$
We next focus on the second contribution on the right hand side of \eqref{eq:idg}:
By the properties of the heat kernel, a double integration by parts in $z$ gives
\begin{align*} \int_{B_R}g(z)\int_{\varepsilon}^\infty  t^{1-s}  \p_tK_t(x',z) dtdz& = \int_{B_R}g(z)\int_{\varepsilon}^\infty  t^{1-s}  L_zK_t(x',z) dtdz \\ & = \int_{B_R}g(z) L_z \int_{\varepsilon}^\infty  t^{1-s}  K_t(x',z) dtdz \\  & = \int_{B_R\setminus B_r} L g(z) \int_{\varepsilon}^\infty  t^{1-s} K_t(x',z) dtdz.
\end{align*}
Here we note that the dominated convergence theorem is applicable, since by \cite[equation (0.6)]{Gri95}  we have the bound
$|\partial_tK_t(x',z)|\lesssim \frac{1}{t^{n/2+1}}e^{ -c \frac{|x'-z|^2}{t}}$, and thus
\begin{equation*}
    t^{1-s}| L_zK_t(x',z)| \lesssim  \frac{1}{t^{n/2+s}} e^{ -c \frac{|x'-z|^2}{t}} \leq \frac{1}{t^{n/2+s}},
\end{equation*}
which is integrable for $t>\varepsilon$.
Returning the the first expression in \eqref{eq:idg}, we note that since for $\epsilon \rightarrow 0$
\begin{align*}
\int_{ B_R}g(z)K_\varepsilon(x',z)dz \rightarrow g(x') \mbox{ for } x'\in \R^n,
\end{align*}
we obtain that $\varepsilon^{1-s}\int_{ B_R}g(z)K_\varepsilon(x',z)dz \rightarrow 0$ for $\epsilon \rightarrow 0$. We further observe that, for almost every $z\in B_R$, we have
$$ | L g(z)|\left| \int_{\varepsilon}^\infty t^{1-s}K_t(x',z)dt\right| \lesssim | L g(z)| \int_{\varepsilon}^\infty t^{1-s-n/2}e^{ -c \frac{|x'-z|^2}{t}}  dt \lesssim \frac{| L g(z)|}{|x'-z|^{n+2s-4}},$$
$$ g(z)\left| \int_{\varepsilon}^\infty t^{-s}K_t(x',z)dt\right| \lesssim g(z) \int_{\varepsilon}^\infty t^{-s-n/2}e^{ -c \frac{|x'-z|^2}{t}}  dt \lesssim \frac{ g(z)}{|x'-z|^{n+2s-2}},$$
where the right hand sides are all integrable functions of $z$ due to the support condition on $g$.
Therefore, {returning to the expression in \eqref{eq:idg},} by dominated convergence we deduce
$$ \int_{B_R}g(z)\int_{0}^\infty  t^{-s}  K_t(x',z) dtdz  \approx  \int_{B_R\setminus B_r}L g(z) \int_{0}^\infty  t^{1-s} K_t(x',z) dtdz.$$

With this in hand, we study the tangential gradient of the above expression and thus bound the quantity from \eqref{eq:idggrad}.
To this end, we note that there exists a constant $\delta>0$ such that $|x'-z|>\delta$ for all $z\in B_R\setminus B_r$. Thus, 
$$ \left| L g(z) t^{1-s}\nabla_{x'}K_t(x',z) \right| \lesssim | L g(z)| t^{-s-n/2}|x'-z|e^{ -c \frac{|x'-z|^2}{t}} \lesssim | L g(z)| t^{-s-n/2}e^{-\frac{c\delta^2}{t}}, $$
with $$\int_{B_R\setminus B_r}\int_0^\infty |Lg(z)|t^{-s-n/2}e^{-\frac{c\delta^2}{t}}dtdz \lesssim \int_{B_R\setminus B_r} |Lg(z)|dz<\infty.$$
As a consequence, it is clear that
\begin{align*}
    \left| \nabla_{x'}\int_{\R^n}g(z)\int_{0}^\infty  t^{-s}  K_t(x',z) dtdz \right| & \lesssim \int_{B_R\setminus B_r}| L g(z)|\int_{0}^\infty t^{1-s} |\nabla_{x'}K_t(x',z)|dtdz < \infty.
\end{align*}
\end{proof}

In the next short lemma we further observe that $v_f:= L^{s-1} u_f \in \dot{H}^1(\R^n)$ if $u_f \in H^s(\R^n)$ is compactly supported.

\begin{lem}\label{lemma1}
Let $u_f \in H^s(\R^n)$ with $\supp(u_f) \subset \R^n$ compact. Then, $v_f:= L^{s-1} u_f \in \dot{H}^1(\R^n)$ and $\|\nabla v_f\|_{L^2(\R^n)} \leq C \|u_f\|_{H^s(\R^n)}$.
In particular, if
$u_{f,k} \rightarrow u_f$ in $H^s(\R^n)$, then $v_{f,k}:= L^{s-1}u_{f,k}\rightarrow v_f := L^{s-1}u_f $ in $\dot H^1(\R^n)$.
\end{lem}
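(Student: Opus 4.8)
The plan is to reduce the estimate to the pointwise kernel bound
\[
|\nabla v_f(x')|\;\lesssim\;\int_{\R^n}\frac{|u_f(z)|}{|x'-z|^{\,n+2s-1}}\,dz,\qquad x'\in\R^n,
\]
and then to bound the right-hand side in $L^2(\R^n)$. This kernel bound is precisely \eqref{eq:better-estimate} with $k=1$: up to sign, $v_f=L^{s-1}u_f$ coincides with the function $v$ of Proposition~\ref{prop:reg} associated with the datum $u=u_f$, by the computations of Section~\ref{sec:duality} (formula \eqref{eq:representation_s} and Theorem~\ref{prop:equation_1}). Alternatively it can be derived directly from $L^{s-1}=(-\nabla'\cdot a\nabla')^{s-1}=\tfrac{1}{\Gamma(1-s)}\int_0^\infty t^{-s}e^{t\nabla'\cdot a\nabla'}\,dt$ (valid since $1-s\in(0,1)$), using the kernel bound $|D_{x'}^kK_t(x',z)|\lesssim t^{-(n+k)/2}e^{-c|x'-z|^2/t}$ from \eqref{estimate-kernel} and the formula $\int_0^\infty\tau^{-\sigma-1}e^{-A/\tau}\,d\tau\approx A^{-\sigma}$ from \eqref{formula-integral} (with $\sigma=s+\tfrac{n-1}{2}$), the differentiation under the integral being justified by Lemma~\ref{CS-estimates-vertical}.

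For $s\in(0,\tfrac12)$ the kernel $|\cdot|^{-(n+2s-1)}$ is locally integrable, and I would split it as $|\cdot|^{-(n+2s-1)}(\chi_{B_1}+\chi_{B_1^c})$ and apply Young's convolution inequality: the compact support of $u_f$ yields $u_f\in L^1(\R^n)\cap L^2(\R^n)\subset L^p(\R^n)$ for all $p\in[1,2]$ with $\|u_f\|_{L^p(\R^n)}\lesssim\|u_f\|_{L^2(\R^n)}$, while the hypothesis $n\ge3$ (more precisely $n>2-4s$) ensures $|\cdot|^{-(n+2s-1)}\chi_{B_1^c}\in L^2(\R^n)$ and $|\cdot|^{-(n+2s-1)}\chi_{B_1}\in L^\rho(\R^n)$ for some $\rho\in(1,2)$; this gives $\|\nabla v_f\|_{L^2(\R^n)}\lesssim\|u_f\|_{L^2(\R^n)}\le\|u_f\|_{H^s(\R^n)}$. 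For $s\in[\tfrac12,1)$ the kernel is no longer locally integrable, and here I would follow Step~3 of the proof of Proposition~\ref{prop:reg}: fixing $g\in C_c^\infty(\R^n)$ with $g\equiv1$ on $\supp(u_f)$ and Caffarelli--Silvestre extension $\tilde g$, and setting $h(x'):=\nabla_{x'}\int_0^\infty y^{1-2s}\tilde g(x',y)\,dy$ (bounded on $\supp(u_f)$ by that proof), one obtains
\[
\big|\nabla v_f(x')-h(x')u_f(x')\big|\;\lesssim\;\int_{\R^n}\frac{|u_f(z)-u_f(x')|}{|x'-z|^{\,n+2s-1}}\,dz,
\]
and then a Cauchy--Schwarz splitting of this integral, the local integrability of $|\cdot|^{-(n+2s-2)}$ (valid for $s<1$, with supremum over $x'$ controlled by $\diam(\supp u_f)$), the identity $H^s(\R^n)=W^{s,2}(\R^n)$, and a direct use of the kernel bound from the first step for $x'$ away from $\supp(u_f)$ (again using $n>2-4s$) yield $\|\nabla v_f-h\,u_f\|_{L^2(\R^n)}\lesssim\|u_f\|_{H^s(\R^n)}$, whereas $\|h\,u_f\|_{L^2(\R^n)}\le\|h\|_{L^\infty(\supp u_f)}\|u_f\|_{L^2(\R^n)}$. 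In either regime $\nabla v_f\in L^2(\R^n)$ with $\|\nabla v_f\|_{L^2(\R^n)}\le C\|u_f\|_{H^s(\R^n)}$, the constant $C$ depending only on $n$, $s$, the ellipticity constants of $a$, and $\diam(\supp u_f)$.

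The ``in particular'' statement follows from linearity of $L^{s-1}$: assuming, as we may, that the $u_{f,k}$ all have supports in a fixed compact neighbourhood of $\supp(u_f)$, the above constant $C$ is uniform in $k$, and hence $\|\nabla(v_f-v_{f,k})\|_{L^2(\R^n)}=\|\nabla L^{s-1}(u_f-u_{f,k})\|_{L^2(\R^n)}\le C\|u_f-u_{f,k}\|_{H^s(\R^n)}\to0$. I expect the main obstacle to be the low-frequency behaviour of $L^{s-1}$: since $0$ lies in the continuous spectrum of $-\nabla'\cdot a\nabla'$, the operator $L^{s-1}$ is unbounded near the bottom of the spectrum, so the gradient estimate genuinely requires both the compact-support hypothesis on $u_f$ (which supplies $u_f\in L^1$) and the dimensional restriction $n\ge3$, the latter being exactly what makes the far-field part of the above kernel square-integrable on $\R^n$. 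The loss of local integrability of that kernel when $s\ge\tfrac12$ is a secondary, purely local, technical point, handled by the subtraction of $h\,u_f$ as in Proposition~\ref{prop:reg}.
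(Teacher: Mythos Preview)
Your proof is correct and follows essentially the same route as the paper: a local estimate (via Proposition~\ref{prop:reg}, specifically the kernel bound \eqref{eq:better-estimate} and, for $s\ge\tfrac12$, the subtraction argument of Step~3) combined with a far-field estimate obtained directly from the pointwise kernel bound and the integrability condition $n+4s>2$. The paper's version is slightly more economical in that it simply invokes Proposition~\ref{prop:reg} for the local part on $B_{2R}$ without re-splitting into the cases $s<\tfrac12$ and $s\ge\tfrac12$, and then handles $\R^n\setminus B_{2R}$ exactly as you do; your additional remark that the constant depends on $\diam(\supp u_f)$ and your explicit uniform-support assumption for the sequence $u_{f,k}$ make transparent a point the paper leaves implicit.
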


\begin{proof}
Let $R\in\R$ be such that $\supp(u_f)\subseteq B_R$. By the argument in Proposition \ref{prop:reg} we obtain the local estimate $\|\nabla v_f\|_{L^2(B_{2R})}\lesssim \|u_f\|_{H^s(\R^n)}$. We then
use the compact support of $u_f$ and Lemma \ref{CS-estimates-vertical} to compute
\begin{align*}
    \|\nabla v_f\|_{L^2(\R^n \setminus B_{2R})}^2 & \lesssim \int_{\R^n \setminus B_{2R}} \left|\int_{B_R}\frac{|u_f(z)|}{|x'-z|^{n+2s-1}}dz\right|^2dx' 
    \\ & \lesssim
    \int_{\R^n \setminus B_{2R}} \left|\int_{B_R}\frac{|u_f(z)|}{|x'|^{n+2s-1}}dz\right|^2dx'
    \\ & \lesssim \|u_f\|_{L^2(\R^n)}^2,
\end{align*}  
where we used that $n+4s >2$.
Together with the argument in Proposition \ref{prop:reg} this proves the desired global estimate.
\end{proof}

Finally, we prove one more auxiliary Lemma, which was used in the proof of Lemma \ref{well-posed-adjoint}.

\begin{lem}\label{CS-Lp}
    Let $n\geq 3$, $s\in (0,1)$, $q\in(1,\infty)$, $R>0$, and assume that $\Omega\Subset B_R \subset \R^n$ is a bounded open set. If $u\in L^{\frac{2n}{n-2}}(\R^n)$ is supported in the complement of $B_{R}$, then 
    $$\lim\limits_{x_{n+1}\rightarrow 0} x_{n+1}^{1-2s}\p_{n+1}\tilde u \in L^2(\Omega),$$
    where $\tilde u$ is the Caffarelli-Silvestre extension of $u$ corresponding to the operator $L:=\nabla'\cdot a\nabla'$.  
\end{lem}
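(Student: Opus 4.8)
The plan is to use the Stinga--Torrea representation \eqref{eq:CS_Stinga} together with the fact that $u$ vanishes in a neighbourhood of $\overline\Omega$: this removes the singularity of the conormal derivative on $\Omega\times\{0\}$ and turns it into a convolution of $u$ with a locally integrable, indeed bounded, kernel. First I would record that, by \cite{ST10} applied to the operator $L:=\nabla'\cdot a\nabla'$, the quantity $\lim_{x_{n+1}\to 0}x_{n+1}^{1-2s}\p_{n+1}\tilde u(x',x_{n+1})$ equals, up to a nonzero constant depending only on $n$ and $s$, the value $(-\nabla'\cdot a\nabla')^su(x')$, which admits the subordination representation
\[
(-\nabla'\cdot a\nabla')^su(x') = \frac{1}{\Gamma(-s)}\int_0^\infty\Big(e^{tL}u(x')-u(x')\Big)\frac{dt}{t^{1+s}},\qquad e^{tL}u(x')=\int_{\R^n}K_t(x',z)\,u(z)\,dz,
\]
with $K_t\ge 0$ the heat kernel of $L$, obeying the Gaussian bound \eqref{estimate-kernel} with $k=0$. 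Since $\Omega\Subset B_R$ and $\supp(u)\subseteq\R^n\setminus B_R$, we have $\delta_0:=\dist(\overline\Omega,\R^n\setminus B_R)>0$ and $u\equiv 0$ on $\Omega$; hence, for $x'\in\Omega$, the $u(x')$-term drops out, the subordination integral converges absolutely (exponential decay as $t\to0$ because $|x'-z|\ge\delta_0$ on $\supp(u)$, power decay as $t\to\infty$), and, using \eqref{estimate-kernel} together with \eqref{formula-integral},
\[
|(-\nabla'\cdot a\nabla')^su(x')|\ \lesssim\ \int_{\R^n}|u(z)|\int_0^\infty t^{-\frac n2-1-s}e^{-c\frac{|x'-z|^2}{t}}\,dt\,dz\ \approx\ \int_{\R^n\setminus B_R}\frac{|u(z)|}{|x'-z|^{n+2s}}\,dz\qquad(x'\in\Omega).
\]
On the last integral the integrand is controlled, since $|x'-z|\ge\delta_0$ whenever $x'\in\Omega$ and $z\in\supp(u)$.

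Next I would split the effective kernel as $\chi_{\{|y|\ge\delta_0\}}|y|^{-(n+2s)}=k_1(y)+k_2(y)$, where $k_1:=\chi_{\{\delta_0\le|y|\le1\}}|y|^{-(n+2s)}$ (with the convention $k_1\equiv0$ if $\delta_0\ge1$) and $k_2:=\chi_{\{|y|>1\}}|y|^{-(n+2s)}$, so that $|(-\nabla'\cdot a\nabla')^su(x')|\lesssim (|u|\ast k_1)(x')+(|u|\ast k_2)(x')$ for $x'\in\Omega$. The kernel $k_1$ is bounded with compact support, so by H\"older's inequality $(|u|\ast k_1)(x')\le\|k_1\|_{L^\infty(\R^n)}\,|B_1|^{\frac{n+2}{2n}}\,\|u\|_{L^{\frac{2n}{n-2}}(\R^n)}$ uniformly in $x'$, whence $|u|\ast k_1\in L^\infty(\R^n)\subseteq L^2(\Omega)$ because $\Omega$ is bounded. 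The kernel $k_2$ lies in $L^1(\R^n)$ (as $n+2s>n$), so Young's convolution inequality gives $\||u|\ast k_2\|_{L^{\frac{2n}{n-2}}(\R^n)}\le\|k_2\|_{L^1(\R^n)}\|u\|_{L^{\frac{2n}{n-2}}(\R^n)}<\infty$; since $\tfrac{2n}{n-2}>2$ and $\Omega$ is bounded, $L^{\frac{2n}{n-2}}(\Omega)\hookrightarrow L^2(\Omega)$, hence $|u|\ast k_2\in L^2(\Omega)$ as well. Combining the two contributions with the pointwise bound above yields $\lim_{x_{n+1}\to0}x_{n+1}^{1-2s}\p_{n+1}\tilde u\in L^2(\Omega)$.

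The only mildly delicate point — and the one I would be most careful about — is that $u$ is merely assumed to lie in the ``large'' Lebesgue space $L^{\frac{2n}{n-2}}(\R^n)$ rather than in $L^2(\R^n)$, so Young's inequality cannot be invoked to land directly in $L^2(\R^n)$; the argument is rescued by exploiting both the positive distance $\delta_0$ between $\Omega$ and $\supp(u)$ (which renders $k_1$ bounded, hence integrable) and the boundedness of $\Omega$ (which supplies the embedding $L^{\frac{2n}{n-2}}(\Omega)\hookrightarrow L^2(\Omega)$). Everything else is a routine combination of the Stinga--Torrea identity \eqref{eq:CS_Stinga}, the Gaussian heat-kernel bound \eqref{estimate-kernel}, and the elementary integral identity \eqref{formula-integral}, all of which are already at our disposal.
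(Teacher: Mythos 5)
Your argument is correct and follows essentially the same route as the paper: both reduce, via the Stinga--Torrea representation and the positive distance between $\Omega$ and $\supp(u)$, to the pointwise bound $\lesssim \int_{\R^n\setminus B_R}|u(z)|\,|x'-z|^{-(n+2s)}\,dz$ for $x'\in\Omega$, and then conclude using duality against $u\in L^{\frac{2n}{n-2}}(\R^n)$. The only (cosmetic) difference is the last step: the paper applies H\"older once with the conjugate exponent $p=\tfrac{2n}{n+2}$, noting that the kernel truncated away from the origin lies in $L^{p}(\R^n)$ because $n+4s>2$, and so gets a uniform bound on $\Omega$ directly, whereas you split the kernel into a bounded compactly supported piece and an $L^1$ tail and use H\"older and Young separately.
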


\begin{proof}
    Let $q:= \frac{2n}{n-2}$. We start by observing that since $u\in L^q(\R^n)$ and \cite[Theorem 2.1]{ST10} holds, the extension $\tilde u$ is well-defined. As in the proof of Lemma \ref{CS-estimates-vertical} we get
\begin{equation*}
    |\p_{y}\tilde{u}(x',y)| \lesssim 
    y^{-1}  (|u|\ast K_{0,y})(x') + y (|u|\ast K_{1,y})(x'),\end{equation*}
    and therefore by H\"older's inequality
\begin{align*}
    \|\lim\limits_{y\rightarrow 0}&y^{1-2s}\p_{y}\tilde{u}(\cdot,y)\|_{L^2(\Omega)}
    \lesssim  \|\lim\limits_{y\rightarrow 0}(y^{-2s}  (|u|\ast K_{0,y}) + y^{2-2s} (|u|\ast K_{1,y}))\|_{L^2(\Omega)}
    \\ & = \left\|\lim\limits_{y\rightarrow 0}\left(\int_{\R^n\setminus B_{R+r}} \frac{|u(z)|}{(|x'-z|^2+y^2)^{n/2+s}}dz + \int_{\R^n\setminus B_{R+r}} \frac{|u(z)| y^{2}}{(|x'-z|^2+y^2)^{n/2+s+1}}dz\right)\right\|_{L^2(\Omega)}
    \\ & \lesssim
    \left\|\lim\limits_{y\rightarrow 0}\int_{\R^n\setminus B_{R+r}} \frac{|u(z)|}{(|x'-z|^2+y^2)^{n/2+s}}dz \right\|_{L^2(\Omega)}
    \\ & \leq    \|u\|_{L^q(\R^n)}\left\|\lim\limits_{y\rightarrow 0}\left(\int_{\R^n\setminus B_{R+r}} \frac{dz}{(|x'-z|^2+y^2)^{p(n/2+s)}}\right)^{1/p} \right\|_{L^2(\Omega)}
    \\ & \leq    \|u\|_{L^q(\R^n)}\left\|\left(\int_{\R^n\setminus B_{R+r}} \frac{dz}{|x'-z|^{p(n+2s)}}\right)^{1/p} \right\|_{L^2(\Omega)}
    \\ & \leq    \|u\|_{L^q(\R^n)}\left(\int_{\R^n\setminus B_{r}} \frac{dz}{|z|^{p(n+2s)}}\right)^{1/p},
\end{align*}
where $p:=\frac{2n}{n+2}$ is the conjugated exponent of $q$. Since the last integral is finite, the proof is complete.
\end{proof}

\subsection*{Acknowledgements}

Giovanni Covi was supported by an Alexander-von-Humboldt postdoctoral fellowship. The research of Tuhin Ghosh was supported by the Collaborative Research Center 1283, Universität Bielefeld. Angkana Rüland was supported by the Deutsche Forschungsgemeinschaft (DFG, German Research Foundation) through the Hausdorff Center for Mathematics under Germany’s Excellence Strategy - EXC-2047/1 - 390685813. Gunther Uhlmann was partly
supported by NSF and  a Robert R. and Elaine F. Phelps  Endowed
Professorship.

\bibliographystyle{alpha}
\bibliography{citations_v7}

\end{document}